\documentclass{amsart}
\usepackage{euscript,amsmath, amssymb, amsfonts, mathtools}
\usepackage{mathrsfs}
\usepackage{mathabx}
\usepackage{stmaryrd}
\usepackage{crossreftools}

\makeatletter
\@namedef{subjclassname@1991}{Subject}
\makeatother

\usepackage[hypertexnames = false, colorlinks=true, allcolors=blue]{hyperref}

\usepackage{tikz-cd}

\numberwithin{equation}{section}
\usepackage{enumitem}   

\newtheorem{theorem}{Theorem}[section]
\newtheorem{lemma}[theorem]{Lemma}
\newtheorem{proposition}[theorem]{Proposition}

\theoremstyle{definition}
\newtheorem{definition}[theorem]{Definition}

\theoremstyle{remark}

\usepackage{graphicx}  
\usepackage{icomma}
\usepackage{bbm}
\usepackage{tikz}
\usetikzlibrary{shapes}
\usepackage{color}
\usepackage{verbatim}
\usepackage{xurl}
\usepackage{indentfirst}
\usepackage{lipsum}
\usepackage[backend = biber]{biblatex}
\usepackage{placeins}
\usepackage{bigints}
\usepackage{subcaption}
\usepackage{appendix}

\DeclareMathOperator{\conv}{conv}

\DeclareMathOperator{\CC}{\mathbb C}
\DeclareMathOperator{\RR}{\mathbb R}
\DeclareMathOperator{\ZZ}{\mathbb Z}

\DeclareMathOperator{\PP}{\mathbb P}
\DeclareMathOperator{\TT}{\mathbb T}
\DeclareMathOperator{\DD}{\mathbb D}

\usepackage{csquotes}

\begin{document}
	\title[Six points can be split by a triple of disjoint discs]{Any six points on the Riemann sphere can be split into three pairs by a triple of disjoint discs}
	
	\author{Matvey Smirnov}
	\address{119991 Russia, Moscow GSP-1, ul. Gubkina 8,
		Institute for Numerical Mathematics,
		Russian Academy of Sciences}
	\email{matsmir98@gmail.com}

	\begin{abstract}
		We prove that for any six points on the Riemann sphere there exist three disjoint closed (or open) discs, each of which contains exactly two of the six distinguished points. This statement shows that recently proposed method to numerically evaluate Kleinian hyperelliptic functions of genus 2 is applicable to any complex curve of genus 2.
		
		\smallskip
		\noindent \textbf{Keywords.} Elementary geometry, analytic geometry, Riemann sphere.
	\end{abstract}
	\subjclass{51M04, 51N20}
	\maketitle
	
	\section{Introduction}
	
	In this paper we denote the Riemann sphere by $\CC\PP(1)$. As usual we identify the complex plane $\CC$ with the affine part of $\CC\PP(1)$, so $\CC\PP(1) = \CC\cup \{\infty\}$. Also let $\mathbb D = \{z \in \CC: |z| < 1\}$ denote the open unit disc, and $\mathbb T = \{z \in \CC: |z| = 1\} = \partial \mathbb D$ denote the unit circle. Recall that biholomorphic mappings $S: \CC\PP(1) \to \CC\PP(1)$ are precisely the M\"obius transformations, i.e. 
	\[
		S(z) = \dfrac{az+ b}{cz + d},
	\]
	where $ad - bc \ne 0$. For the basic properties of Riemann sphere and M\"obius transformations we refer to~\cite[Chap.~VI]{Cartan}.
	
	By a {\it{closed}} (resp. {\it{open}}) {\it{disc}} in $\CC\PP(1)$ we call a set $D \subset \CC\PP(1)$ that is the image of $\overline{\mathbb D}$ (resp. $\mathbb D$) with respect to a M\"obius transformation. That is, a closed disc in $\CC\PP(1)$ is either a closed disc in $\CC$ (with arbitrary center and radius), or the complement of an open disc in $\CC$, or a closed half-plane (with $\infty$). The open discs are described similarly.
	
	The aim of this paper is to prove the following fact.
	
	\begin{theorem}\label{thMain}
		Let $A \subset \CC\PP(1)$ be any six-element set. Then there exist three disjoint closed discs $D_1,D_2,D_3 \subset \CC\PP(1)$ such that $D_j \cap A$ consists of exactly two points for all $j \in \{1,2,3\}$.
	\end{theorem}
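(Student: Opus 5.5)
Because the statement is invariant under M\"obius transformations, I will first normalize $A$. The most useful normalization sends one point to $\infty$, so $A = \{a_1,\dots,a_5,\infty\}$ with $a_j \in \CC$. Discs containing $\infty$ are then complements of open discs or closed half-planes. The reduction works as follows. Pick $D_3$ to be a closed half-plane (or the exterior of a disc) containing $\infty$ and exactly one of the $a_j$. The remaining four finite points lie in the complementary open region $U$, and I then need two disjoint closed discs inside $U$, each containing two of these four points. So the plan has two steps: choose a good point for $D_3$, then solve a four-point problem inside a half-plane.

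For the first step I will use the convex hull. Let $a_1$ be a vertex of $\conv\{a_1,\dots,a_5\}$. There is a line separating $a_1$ strictly from the other four points. Let $D_3$ be the closed half-plane on $a_1$'s side, pushed slightly so that the other four lie in the open half-plane $H$. For the second step I want to split $\{a_2,a_3,a_4,a_5\}$ inside $H$ into two pairs separated by a line $\ell$. I will take two small closed discs, each a neighbourhood of a segment joining the two points of a pair. A thin closed disc around a segment can be a very large disc whose boundary circle nearly coincides with lines through the segment.

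This naive construction fails: a closed disc containing two given points must contain a lens around the segment between them, but it can always be kept inside a thin neighbourhood of that segment. Indeed, a disc of huge radius whose chord is the segment, slightly enlarged, stays within an $\varepsilon$-neighbourhood of the segment, since its sagitta tends to $0$. So each pair $\{p,q\}$ admits closed discs inside arbitrarily small neighbourhoods of the segment $[p,q]$ containing only $p,q$ from $A$, as long as that segment avoids the other points of $A$. The problem therefore reduces to a combinatorial statement: find a perfect matching of the six points into three pairs whose connecting segments (geodesic arcs on the sphere, or straight segments after normalization) are pairwise disjoint and contain no other point of $A$. Since the neighbourhoods can be taken arbitrarily thin, disjoint compact segments have disjoint neighbourhoods.

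The planar matching is a classical fact. For any $2n$ points in the plane, a perfect matching minimizing total Euclidean length has pairwise non-crossing segments: two crossing segments could be uncrossed by the triangle inequality, which strictly decreases the length. Collinear degeneracies need care. A minimal matching never has a segment passing through a third point of $A$, nor two overlapping collinear segments, since re-pairing strictly shortens the total length in those cases too. I would apply this to all six points after a M\"obius map sending $A$ into $\CC$, so no point is at $\infty$. This route bypasses the convex-hull step entirely and is the proof I will actually write.

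The main obstacle is the degenerate collinear configurations and making the thin-disc lemma precise. For the lemma, given a segment $[p,q]$ and $\varepsilon>0$, I will exhibit the disc explicitly. Its center lies on the perpendicular bisector at distance $R$ from the midpoint, and its radius is $\sqrt{R^2+|p-q|^2/4}+\delta$. Taking $R$ large and $\delta$ small keeps it inside the $\varepsilon$-neighbourhood. For the degeneracies, I will check that any matching in which some point of $A$ lies on another pair's segment, or two segments meet, can be strictly shortened. Then any minimizer, which exists by finiteness, has disjoint segments at positive distance, and choosing $\varepsilon$ below a third of the minimal distance finishes the proof.
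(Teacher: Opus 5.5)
Your thin-disc lemma is false, and the reduction to a non-crossing matching depends on it. A closed disc in $\CC$ that contains $p$ and $q$ has radius $\rho\ge |p-q|/2$. It meets the line through $p,q$ and has width $2\rho$ in the orthogonal direction, so it contains a point at distance at least $\rho$ from that line. Hence it never lies in an $\varepsilon$-neighbourhood of $[p,q]$ once $\varepsilon<|p-q|/2$, and a disc through $\infty$ is unbounded. Your explicit disc (centre on the bisector at distance $R$, radius $\sqrt{R^2+|p-q|^2/4}+\delta$) contains its own centre, which is at distance $R$ from the segment. The vanishing sagitta describes only the minor cap cut off by the chord, not the disc. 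So disjointness of the three segments gives no control over disjointness of the discs. For four points split into two pairs, disjoint segments do suffice: separate them by a line and take a large disc in each open half-plane. With three pairs, however, the discs are necessarily fat and must be chosen jointly, and that is the whole content of the theorem.

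The failure is not merely technical: the Euclidean-minimal matching can be unrealisable. Take $A=\{\pm1,\pm\tfrac{9}{10}i,\pm10i\}$. Among the fifteen matchings, the unique minimiser is $\{-1,1\}$, $\{\tfrac{9}{10}i,10i\}$, $\{-\tfrac{9}{10}i,-10i\}$, of length $20.2$ (the next best has length about $20.495$), and its segments are pairwise disjoint. Let $D_1\ni\pm1$ be a closed disc missing the other four points. If $D_1\subset\CC$ has centre $x_0+iy_0$ and radius $\rho$, then $\rho^2\ge(1+|x_0|)^2+y_0^2$. So $D_1\cap i\RR$ is a segment through $0$ of length $2\sqrt{\rho^2-x_0^2}\ge 2$, which must contain $\tfrac{9}{10}i$ or $-\tfrac{9}{10}i$. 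Hence $\infty\in D_1$, and the other two discs lie in $U=\CC\PP(1)\setminus D_1$. This $U$ is an open disc or open half-plane in $\CC$ containing $\pm10i$, hence $0$, but not $\pm1$. A half-plane is impossible, since its complement is convex and contains $\pm1$, hence $0$. An open disc is impossible by the same chord estimate, since it meets $\RR$ in an interval through $0$ of length greater than $20$. So this pairing is realised by no triple of disjoint closed discs. The set is still splittable via another pairing, e.g.\ $\{1,\tfrac{9}{10}i\}$, $\{-1,-\tfrac{9}{10}i\}$, $\{\pm10i\}$ with the discs $1+i+1.4\overline{\DD}$, $-1-i+1.4\overline{\DD}$ and $\{|z|\ge3\}\cup\{\infty\}$. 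What your proposal lacks is any mechanism for choosing a pairing compatible with round discs. The paper supplies it by normalising to $\{-1,1,\infty\}\cup E$ with $\pm1$ a closest pair, and then handling the remaining cases with the strip argument, the diameter discs $F(z,w)$, and the discs $F_\pm$.
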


	It is easy to see that the following statements are equivalent to Theorem~\ref{thMain}.
	
	\begin{theorem}\label{thMainAlt}
		Let $A \subset \CC\PP(1)$ be any six-element set. Then there exist three disjoint open discs $D_1,D_2,D_3 \subset \CC\PP(1)$ such that $D_j \cap A$ consists of exactly two points for all $j \in \{1,2,3\}$.
	\end{theorem}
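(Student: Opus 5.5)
We derive Theorem~\ref{thMainAlt} from Theorem~\ref{thMain} (and conversely) with a shrinking/enlarging argument that uses only the finiteness of $A$ and compactness. Möbius transformations act transitively on the family of discs, so we may move one disc to a convenient position whenever needed.

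\emph{From closed to open discs.} Let $D_1,D_2,D_3$ be disjoint closed discs as in Theorem~\ref{thMain}. They are disjoint compact subsets of $\CC\PP(1)$, so their pairwise distances in the spherical metric are bounded below by some $\varepsilon>0$. For each $j$ we need an open disc $D_j'\supset D_j$ that lies inside the $\varepsilon/2$-neighbourhood of $D_j$ and contains no point of $A\setminus D_j$. To construct it, apply a Möbius transformation $S_j$ with $S_j(D_j)=\overline{\DD}_r$ for some $r<1$, say. Then $D_j'=S_j^{-1}(\{|z|<r+\delta\})$ works once $\delta>0$ is small: the sets $S_j^{-1}(\{|z|\le r+\delta\})$ decrease to $D_j$ as $\delta\to 0$, so by compactness they eventually lie in any given neighbourhood of $D_j$ and miss the finitely many points of $A\setminus D_j$. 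The resulting $D_j'$ are pairwise disjoint and satisfy $D_j'\cap A=D_j\cap A$, so each contains exactly two points of $A$.

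\emph{From open to closed discs.} Let $D_1,D_2,D_3$ be disjoint open discs as in Theorem~\ref{thMainAlt}. Normalize $D_j=S_j^{-1}(\DD)$ and set $D_j''=S_j^{-1}(\{|z|\le 1-\delta\})$. For $\delta>0$ small enough, $D_j''$ still contains the two points of $A\cap D_j$, since these are finitely many points of the open set $D_j$. The sets $D_j''$ are closed discs, and $D_j''\subset D_j$, so they are disjoint and contain no other points of $A$.

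Neither direction has a genuine obstacle. The only points needing care are the normalization by a Möbius map, which lets us "grow" and "shrink" a disc within the same class, and the compactness argument that gives the uniform separation $\varepsilon$.
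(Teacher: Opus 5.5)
Your proof is correct and follows the paper's route. The paper proves only the closed-disc version (Theorem~\ref{thMain}) and calls its equivalence with the open-disc version obvious; your enlargement argument (positive spherical distance between disjoint compact discs, and finiteness of $A$ to keep $D_j'\cap A=D_j\cap A$) supplies exactly those omitted details, while your open-to-closed direction is correct but not needed here.
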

	
	\begin{theorem}\label{thMainAltAlt}
		Let $S \subset \RR^3$ be the Euclidean 2-dimensional sphere, i.e. $S = \{(x,y,z) \in \RR^3: x^2 + y^2 + z^2 = 1\}$, and let $A \subset S$ be any six-element set. Then there exist three disjoint open (or closed) discs $B_1,B_2,B_3 \subset S$ (here by discs we mean balls in metric space $S$ equipped with Euclidean distance) such that $B_j \cap A$ consists of exactly two points for all $j \in \{1,2,3\}$.
	\end{theorem}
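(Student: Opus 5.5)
We deduce Theorem~\ref{thMainAltAlt} from Theorem~\ref{thMain} using stereographic projection. Everything is based on the following description of balls in $S$. For $c \in S$ and $x \in S$ we have $|x-c|^2 = 2 - 2\langle x, c\rangle$. Hence the closed ball $\{x \in S : |x-c| \le r\}$ is the cap $\{x \in S: \langle x,c\rangle \ge t\}$ with $t = 1 - r^2/2$, and the open ball is the corresponding cap with strict inequality. Conversely, every cap $\{\langle x,c\rangle \ge t\}$ with $|c|=1$ and $-1<t<1$ is a ball of radius $r=\sqrt{2-2t}\in(0,2)$ centered at $c$. So the nondegenerate balls in $S$, those that are neither a point nor the whole of $S$ nor $S$ minus a point, are exactly the caps cut out by planes meeting $S$ in a circle of positive radius.

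Let $\sigma\colon S \to \CC\PP(1)$ be the stereographic projection from the north pole. It is a bijection. It maps circles on $S$ (plane sections) onto circles and lines of $\CC$, with $\infty$ added to lines. A cap $\{\langle x,c\rangle \ge t\}$ with $|t|<1$ is bounded by such a circle. It is connected, and its complement is a nonempty open cap, so its image is a closed region bounded by a circle or line. Hence its image is a closed disc, the complement of an open disc, or a closed half-plane with $\infty$, that is, a closed disc in $\CC\PP(1)$ in the sense of the paper. Conversely, every closed disc of $\CC\PP(1)$ arises this way. These are standard facts, see~\cite[Chap.~VI]{Cartan}. The same correspondence holds between open caps and open discs.

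Now let $A \subset S$ have six elements. Apply Theorem~\ref{thMain} to $\sigma(A)$ to get disjoint closed discs $D_1,D_2,D_3$ with $|D_j\cap\sigma(A)|=2$. Then $B_j=\sigma^{-1}(D_j)$ are disjoint closed caps with $|B_j \cap A| = 2$, and by the first paragraph each $B_j$ is a closed ball of $S$. This gives the closed version.

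For the open version we enlarge the caps slightly. The $B_j$ are pairwise disjoint compact sets, so they are at positive mutual distance, and $A\setminus B_j$ is at positive distance from $B_j$. Replace $B_j=\{|x-c_j|\le r_j\}$ by the open ball $\{|x-c_j|<r_j+\varepsilon\}$. For $\varepsilon>0$ small enough, the new balls are still pairwise disjoint and contain no new points of $A$. This uses continuity in the radius: the open ball of radius $r_j+\varepsilon$ lies in the $\varepsilon$-neighbourhood of the closed ball of radius $r_j$, which is elementary. The same enlargement, done through $\sigma$, also gives Theorem~\ref{thMainAlt} from Theorem~\ref{thMain}.

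The only step needing any care is the dictionary between metric balls in $S$ and discs of $\CC\PP(1)$. The identity $|x-c|^2=2-2\langle x,c\rangle$ makes it immediate, provided we check that the radii produced lie in $(0,2)$. This holds because each $D_j$ is a genuine disc, so $|t|<1$ for the corresponding plane.
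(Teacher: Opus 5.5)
Your argument is correct in substance and follows the route the paper intends. The paper declares the equivalence of Theorems~\ref{thMain}--\ref{thMainAltAlt} obvious and omits it. Your identity $|x-c|^2=2-2\langle x,c\rangle$ (metric balls of $S$ are caps), combined with stereographic projection (caps correspond to discs of $\CC\PP(1)$), is exactly that omitted argument.

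The one thing to fix is your justification of the open-ball enlargement. You claim that the open ball of radius $r_j+\varepsilon$ lies in the $\varepsilon$-neighbourhood of the closed ball of radius $r_j$. This is false for the chordal metric, which is not a length metric. Take a centre $c\in S$, $r=2\cos\eta$ and $\varepsilon=2\eta^2$ with small $\eta>0$. The antipode $-c$ satisfies $|{-c}-c|=2<r+\varepsilon$, but its distance to $\{x\in S:|x-c|\le r\}$ is $2\sin\eta>\varepsilon$. In fact, for any $0<r<d\le 2$, a point at chordal distance $d$ from $c$ is strictly farther than $d-r$ from the closed ball.

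The conclusion you need is still true, and compactness gives it. For $j\ne k$, consider the compact sets $\{|x-c_j|\le r_j+\varepsilon\}\cap\{|x-c_k|\le r_k+\varepsilon\}$ and $\{|x-c_j|\le r_j+\varepsilon\}\cap(A\setminus B_j)$. As $\varepsilon\downarrow 0$ they decrease to $B_j\cap B_k=\emptyset$ and to $\emptyset$ respectively. A decreasing family of nonempty compact sets has nonempty intersection, so both sets are empty for all sufficiently small $\varepsilon$. The enlarged open balls lie inside these closed caps, so they are pairwise disjoint and pick up no new points of $A$. Alternatively, your neighbourhood inclusion does hold with $\varepsilon$ replaced by some $\delta(\varepsilon)\to 0$, by uniform continuity of $\arcsin$, and that is all the argument needs.
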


	The proof of equivalence of Theorems~\ref{thMain}-\ref{thMainAltAlt} is obvious and we omit it.
	
	The motivation for this problem comes from the paper~\cite{FinalAlgorithm}, where such splittings of six points into three pairs were used to formulate the algorithm that computes Kleinian hyperelliptic functions (for definitions see, e.g.~\cite{BuchOldAndNew},~\cite{KleinianWeight2}) associated with complex curves of genus 2. This algorithm exploits the recursive reduction of the problem from a given curve to an isogenous curve, which can be found by a construction due to Richelot (see, e.g.~\cite{Humbert},~\cite[Chap.~8]{Smith}). The construction of an isogenous curve is not unique: there are (in the generic case) 15 options that correspond to partitions of the set of Weierstrass points of the initial curve into three pairs. In~\cite{FinalAlgorithm} it was shown that given three disjoint open discs such that each of them contains exactly two of the Weierstrass points, then the Weierstrass points of corresponding Richelot isogenous curve are again split into three pairs by the same triple of discs. This fact allows one to iterate the Richelot's construction and obtain effective numerical procedures for periods of canonical differentials on a curve of genus 2 and for associated Kleinian functions. However, in~\cite{FinalAlgorithm} it was not proved that such triple of discs always exists.
	
	The rest of the paper is devoted to the proof of Theorem~\ref{thMain}. The structure of this proof (and the sections of this paper) is discussed in the next section. Here we only note that the proof is very elementary and based on it one can quite easy formulate an algorithm that constructs a suitable triple of discs for any six-point set $E \subset \CC\PP(1)$. We shall not formulate such an algorithm.
	
	\section{Basic reduction of the problem and the idea of the proof}
	
	For brevity we call a set $A \subset \CC\PP(1)$ that contains exactly six points {\it{splittable}} if there exist three disjoint closed discs $D_1,D_2,D_3 \subset \CC\PP(1)$ such that $A \cap D_j$ contains exactly two points for all $j \in \{1,2,3\}$. With this terminology we can formulate Theorem~\ref{thMain} as the statement that all six-element sets $A \subset \CC\PP(1)$ are splittable.
	
	\begin{lemma}\label{lemProjectiveInvarianceSplittable}
		Let $A \subset \CC\PP(1)$ be a six-element set and $S:\CC\PP(1) \to \CC\PP(1)$ be a M\"obius transformation. Then $A$ is splittable if and only if $S(A)$ is splittable.
	\end{lemma}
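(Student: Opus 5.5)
The argument uses only two facts: a Möbius transformation is a bijection of $\CC\PP(1)$, and it sends closed discs to closed discs. Both directions then follow from one implication applied to $S$ and to $S^{-1}$. So I will prove: if $A$ is splittable, then $S(A)$ is splittable.

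First I check that closed discs are preserved. By definition, a closed disc is a set $D = T(\overline{\mathbb D})$ for some Möbius transformation $T$. Then
\[
S(D) = (S\circ T)(\overline{\mathbb D}).
\]
The composition $S\circ T$ is again a Möbius transformation: compositions of biholomorphic self-maps of $\CC\PP(1)$ are biholomorphic, or one can simply compose the linear fractional expressions, which corresponds to multiplying invertible $2\times 2$ matrices. Hence $S(D)$ is a closed disc. Likewise, $S^{-1}$ is a Möbius transformation, since the inverse of an invertible matrix is invertible.

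Now suppose $D_1,D_2,D_3$ are disjoint closed discs with $|D_j\cap A| = 2$ for each $j$. Since $S$ is a bijection of $\CC\PP(1)$, it preserves disjointness, so $S(D_1),S(D_2),S(D_3)$ are disjoint. It also commutes with intersections, so
\[
S(D_j)\cap S(A) = S(D_j\cap A),
\]
and this set has exactly two elements. By the previous paragraph each $S(D_j)$ is a closed disc, so $S(A)$, which still has six elements, is splittable.

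For the converse, apply the same implication to $S^{-1}$ and the set $S(A)$, using $S^{-1}(S(A)) = A$.

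There is no real obstacle here. The only point that needs justification is that Möbius transformations form a group under composition, and this is standard (see \cite[Chap.~VI]{Cartan}).
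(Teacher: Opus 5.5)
Your proof is correct and is essentially the paper's argument: the paper simply notes that M\"obius transformations are bijective and map closed discs to closed discs. You have written out the details it leaves implicit, namely closure under composition and inverses, preservation of intersections and disjointness, and the reduction of the converse to $S^{-1}$.
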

	\begin{proof}
		This is obvious, since M\"obius transformations are bijective and map closed discs to closed discs.
	\end{proof}
	
	Using Lemma~\ref{lemProjectiveInvarianceSplittable} we can without loss of generality impose additional conditions on a six-element set $A$ in order to verify its splittability. To formulate such conditions we consider the set
	\[
	\Omega = \{z \in \CC: |z - 1| < 2\} \cup \{z \in \CC: |z + 1| < 2\} = (1 + 2\mathbb D) \cup (-1 + 2\mathbb D),
	\]
	which is shown on Fig.~\ref{figOmega}.
	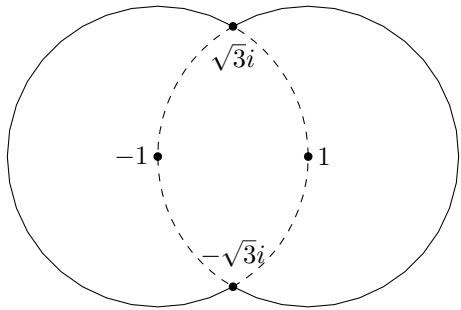
\begin{figure}
	\begin{tikzpicture}
		\draw[black, domain = -120:120] plot({1 + 2*cos(\x)}, {2*sin(\x)});
		\draw[black, dashed, domain = 120:240] plot({1 + 2*cos(\x)}, {2*sin(\x)});
		\draw[black, domain = 60:300] plot({-1 + 2*cos(\x)}, {2*sin(\x)});
		\draw[black, dashed, domain = -60:60] plot({-1 + 2*cos(\x)}, {2*sin(\x)});
		\filldraw (0, {sqrt(3)}) circle[radius = 0.05] node[below, yshift=-3] {$\sqrt{3}i$};
		\filldraw (0, {-sqrt(3)}) circle[radius = 0.05] node[above, yshift=4] {$-\sqrt{3}i$};
		\filldraw (1, 0) circle[radius = 0.05] node[right] {$1$};
		\filldraw (-1, 0) circle[radius = 0.05] node[left] {$-1$};
	\end{tikzpicture}
	\caption{The set $\Omega$; the boundary of $\Omega$ is shown by the solid line.}
	\label{figOmega}
	\end{figure}
	\begin{definition}
		We call a three-element set $E \subset \CC$ distinguished if $E \cap \Omega = \emptyset$ and $|z - w| \ge 2$ for all $z,w \in E$, $z \ne w$.
	\end{definition}
	\begin{proposition}\label{propReductionToSharp}
		Assume that for any distinguished three-element set $E \subset \CC$ the set $E \cup \{1, -1, \infty\} \subset \CC\PP(1)$ is splittable. Then any six-element set $A \subset \CC\PP(1)$ is splittable.
	\end{proposition}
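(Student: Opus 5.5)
The idea is that, once one point of $A$ is sent to $\infty$, the two conditions in the definition of a distinguished set say exactly that $\{1,-1\}$ is a closest pair among the remaining five finite points. So the reduction only needs a Möbius normalization followed by a similarity of $\CC$, and Lemma~\ref{lemProjectiveInvarianceSplittable} then carries splittability back to $A$.

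First I rewrite the condition $E\cap\Omega=\emptyset$. By definition of $\Omega$, a point $z\in\CC$ lies outside $\Omega$ if and only if $|z-1|\ge 2$ and $|z+1|\ge 2$. Note also that $|1-(-1)|=2$. Hence a three-element set $E\subset\CC$ is distinguished if and only if all pairwise distances in the five-point set $E\cup\{1,-1\}\subset\CC$ are at least $2=|1-(-1)|$. In other words, $E$ is distinguished exactly when $\{1,-1\}$ is a pair realizing the minimal distance in $E\cup\{1,-1\}$.

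Now let $A\subset\CC\PP(1)$ be an arbitrary six-element set. Pick any $c\in A$ and a Möbius transformation $S_1$ with $S_1(c)=\infty$; for example $S_1(z)=1/(z-c)$, or $S_1=\mathrm{id}$ if $c=\infty$. The set $B=S_1(A)\setminus\{\infty\}$ consists of five points of $\CC$. Choose $a,b\in B$ with $|a-b|$ minimal among all pairs of distinct points of $B$; then $|a-b|>0$. Let $T$ be the affine map
\[
T(z)=\frac{2z-a-b}{b-a}.
\]
This is a Möbius transformation fixing $\infty$, with $T(a)=-1$ and $T(b)=1$. Since $T$ multiplies all distances by the constant factor $2/|b-a|$, every pair of distinct points of $T(B)$ is at distance at least $2$.

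Set $S=T\circ S_1$ and $E=T(B)\setminus\{1,-1\}$. Then $S(A)=E\cup\{1,-1,\infty\}$, and $E$ is a three-element subset of $\CC$ whose points are pairwise at distance at least $2$. Moreover every $z\in E$ satisfies $|z\mp1|\ge 2$, so $E\cap\Omega=\emptyset$ by the reformulation above. Thus $E$ is distinguished, and by hypothesis $S(A)$ is splittable. Lemma~\ref{lemProjectiveInvarianceSplittable} then gives that $A$ is splittable.

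I do not expect a genuine obstacle here. The only point needing care is seeing that membership in the open set $\Omega$ matches the non-strict inequalities $|z\mp1|\ge2$ coming from minimality. This works because $\Omega$ is a union of open discs of radius exactly $2=|1-(-1)|$, so the boundary case $|z\mp1|=2$, i.e. ties in the minimal distance, is correctly allowed.
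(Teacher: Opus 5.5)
Your proposal is correct and follows the same route as the paper: send a point of $A$ to $\infty$ by a M\"obius map, send a closest pair of the remaining five points to $-1,1$ by an affine map, check that the leftover three points form a distinguished set, and conclude with Lemma~\ref{lemProjectiveInvarianceSplittable}. Your explicit check that $E\cap\Omega=\emptyset$ follows from $|z\mp 1|\ge 2$, because $\Omega$ is a union of open discs of radius $2$, is the detail the paper leaves as ``easy to verify.''
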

	\begin{proof}
		Let $A \subset \CC\PP(1)$ be arbitrary six-element set. At first we can find a M\"obius transformation $S$ such that $\infty \in S(A)$. After that we can choose distinct $z,w \in S(A)\setminus \{\infty\}$ with smallest possible distance and find an affine transformation $T$, such that $T(z) = -1$ and $T(w) = 1$. Thus, we arrive at the six-element set $T(S(A))$ such that $\{-1, 1, \infty\} \subset T(S(A))$ and $2$ (which is the distance between $-1$ and $1$) is the smallest possible distance between distinct elements of $T(S(A)) \setminus\{\infty\}$. Now it is easy to verify that the set $E = T(S(A)) \setminus \{-1, 1, \infty\}$ is distinguished, so $T(S(A))$ is splittable by assumption (and so is $A$ by Lemma~\ref{lemProjectiveInvarianceSplittable}).
	\end{proof}
	
	Now the problem is to show splittability only for the sets of the form $A = E \cup \{-1,1,\infty\}$, where $E$ is distinguished. Since $A$ constains $\infty$ it will be customary to us to reformulate splittability in this case. From now on we shall denote the convex hull of a set $X \subset \CC$ by $\conv X$.
	
	\begin{lemma}\label{lemFinishingSplitting}
		Let $e_1, \dots, e_5$ be any five distinct points in $\CC$. Assume further that there exist closed bounded disjoint discs $F_1, F_2 \subset \CC$ such that $e_1, e_2 \in F_1$, $e_3, e_4 \in F_2$ and $e_5 \notin \conv(F_1 \cup F_2)$. Then the set $\{e_1, e_2, e_3, e_4, e_5, \infty\}$ is splittable.
	\end{lemma}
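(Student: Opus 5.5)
We take $D_1 = F_1$ and $D_2 = F_2$, and construct $D_3$ as the complement of a large open disc (or an open half-plane) so that $D_3$ contains $e_5$ and $\infty$ but misses $F_1 \cup F_2$. Then $D_1,D_2,D_3$ are disjoint closed discs in $\CC\PP(1)$, each containing exactly two of the six points. Indeed $F_1,F_2$ are disjoint and $D_3$ misses both. Also $e_5 \notin F_1\cup F_2$, since $e_5 \notin \conv(F_1\cup F_2)$. Hence $F_1\cap A=\{e_1,e_2\}$, $F_2\cap A=\{e_3,e_4\}$ and $D_3\cap A=\{e_5,\infty\}$, where $A = \{e_1,\dots,e_5,\infty\}$.

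The key step is the construction of $D_3$. The set $K = \conv(F_1 \cup F_2)$ is compact and convex, because $F_1,F_2$ are bounded closed discs. Since $e_5 \notin K$, the separating hyperplane theorem (in $\RR^2 \cong \CC$) gives a real-linear functional $\ell$ and a constant $c$ with $\ell(e_5) > c > \max_K \ell$. Concretely, we take the nearest point $p$ of $K$ to $e_5$ and the line through the midpoint of $[p,e_5]$ orthogonal to $e_5 - p$. Let $H = \{z : \ell(z) \ge c\}$ be the closed half-plane, and put $D_3 = H \cup \{\infty\}$. This is a closed disc in $\CC\PP(1)$ in the sense of the paper. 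It contains $e_5$ and $\infty$ and is disjoint from $K \supset F_1 \cup F_2$.

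It remains to check that $D_3$ contains no further points of $A$. The only points of $A$ other than $e_5$ and $\infty$ are $e_1,\dots,e_4$, and these lie in $F_1\cup F_2 \subset K$, which $D_3$ misses. So $D_3 \cap A = \{e_5,\infty\}$ exactly, and $A$ is splittable.

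There is no serious obstacle. The only point needing care is that the strict separation of the point $e_5$ from the compact convex set $K$ yields a closed half-plane disjoint from $K$, and this is standard. If one prefers a genuinely round disc, a sufficiently large disc tangent-approximating $H$ also works, but the half-plane suffices by the paper's definition of closed disc.
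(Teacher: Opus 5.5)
Your proof is correct and is essentially the paper's argument. Like the paper, you take $F_1$, $F_2$, and a closed half-plane together with $\infty$ that strictly separates $e_5$ from the compact convex set $\conv(F_1\cup F_2)$, and you spell out the membership checks the paper leaves as ``clearly''.
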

	\begin{proof}
		Since $e_5 \notin \conv(F_1 \cup F_2)$ and the set $\conv(F_1 \cup F_2)$ is convex and compact, there exists a closed half-plane $H \subset \CC\PP(1)$ such that $e_5 \in H$ and $H \cap \conv(F_1 \cup F_2) = \emptyset$ (see~\cite[Theorems~1.12 and~1.14]{ConvexLectures}). Clearly, $F_1$, $F_2$, and $H$ constitute a triple of closed disjoint discs that splits $\{e_1, e_2, e_3, e_4, e_5, \infty\}$ into three pairs.
	\end{proof}
	
	As we have finished the foregoing preparation we now discuss the structure of the proof. The main idea is to consider simple methods that may yield a suitable triple of discs for a given distinguished set $E$. These methods will not work for all possible sets $E$, but will sufficiently narrow the remaining configurations. Modulo small subtleties we introduce only two such methods. The first one is to consider the strip $S = \{z \in \mathbb C: |\mathrm{Im}(z)| \le 1\}$. Clearly, if $E \cap S = \emptyset$ and $E$ does not lie entirely on the one side of $S$ (i.e. there are $z$ and $w$ in $E$ with distinct signs of imaginary parts), then $E \cup \{-1,1,\infty\}$ is splittable. Moreover, as we shall see, splittability holds even if $E$ satisfies the foregoing property after some rotation. Also it appears that for distinguished sets $E$ the fact that no rotation of the strip $S$ splits $E$ already significantly restricts possible configurations of $E$. Next method is to try to use discs for which a given pair of points in $E$ constitutes a diameter. In conjunction with the fact that $E$ cannot be splitted by a strip this again severely narrows the remaining cases. After these two steps the proof is basically reduced to considering a concrete construction of discs and verifying several inequalities, that show that discs do not intersect. However, this third step is the most technical one, as it requires a lot of calculations (mostly, analyzing extrema of some elementary functions in order to obtain estimates on them).
	
	The next three sections correspond to the described steps of the proof. In section~\ref{sec:Strips} we obtain some conditions on a distinguished set $E$ assuming it cannot be splitted by a strip. The main result of Section~\ref{sec:Diameters} is the simple sufficient condition for splittability of $E \cup \{-1,1,\infty\}$ for a distinguished set $E$ that cannot be separated by a strip. This condition uses only the discs for which a given pair of points in $E$ constitutes a diameter. Finally, in Section~\ref{sec:Calculations} we consider the distinguished sets $E$, that were not covered by preceding sections, and finalize the proof. In order to improve readability the key steps of the proof are called ``propositions'', while auxiliary statements are called ``lemmas''.
	
	\section{Splitting by a strip}\label{sec:Strips}
	
	\begin{definition}\label{defSplittingByAStrip}
		Let $E \subset \CC$ be a three-element set. We say that $E$ is splittable by a strip (see Fig.~\ref{figSplittableByAStrip}) if, there exists $a \in \TT$ such that $|\mathrm{Im}(z/a)| > 1$ for all $z \in E$, and there exist $z,w \in E$ such that $\mathrm{Im}(z/a) > 1$ and $\mathrm{Im}(w/a) < -1$.
	\end{definition}

	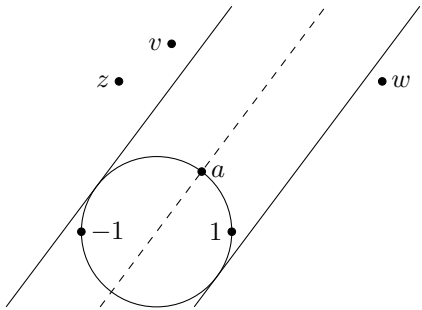
\begin{figure}
		\begin{tikzpicture}
			\draw (0, 0) circle (1);
			\filldraw (1, 0) circle[radius = 0.05] node[left] {$1$};
			\filldraw (-1, 0) circle[radius = 0.05] node[right] {$-1$};
			\draw (0.5, -1) -- (3.5, 3);
			\draw (-2, -1) -- (1, 3);
			\draw[dashed] (-0.75, -1) -- (2.25, 3);
			\filldraw (-0.5, 2) circle[radius = 0.05] node[left] {$z$};
			\filldraw (0.2, 2.5) circle[radius = 0.05] node[left] {$v$};
			\filldraw (3, 2) circle[radius = 0.05] node[right] {$w$};
			\filldraw (0.6, 0.8) circle[radius = 0.05] node[right] {$a$};
		\end{tikzpicture}
		\caption{An example of a set $E = \{z,w,v\}$ splittable by a strip.}
		\label{figSplittableByAStrip}
	\end{figure}
	
	\begin{proposition}\label{propSplittingIfAsterisk}
		Let $E \subset \CC$ be a three-element set that is splittable by a strip. Then the set $\{-1, 1, \infty\} \cup E$ is splittable.
	\end{proposition}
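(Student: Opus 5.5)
We will apply Lemma~\ref{lemFinishingSplitting}, with the closed unit disc as $F_1$ and a suitable disc in the upper half-plane as $F_2$.

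\emph{Normalization.} Let $a \in \TT$ be as in Definition~\ref{defSplittingByAStrip} and apply the rotation $z \mapsto z/a$. It is a M\"obius transformation fixing $\infty$, so by Lemma~\ref{lemProjectiveInvarianceSplittable} it suffices to prove the rotated set is splittable. After rotating, the six points are $\infty$, two points $\pm b$ with $|b| = 1$ (the images of $\pm 1$), and three points of $E' = E/a$. Every $z \in E'$ satisfies $|\mathrm{Im}(z)| > 1$, and both signs of $\mathrm{Im}$ occur. Two points of $E'$ lie on one side of the strip $\{|\mathrm{Im}(z)|\le 1\}$ and one on the other. Applying $z\mapsto -z$ if necessary (which preserves $\{\pm b\}$ and $\infty$), we may assume $\mathrm{Im}(z),\mathrm{Im}(v) > 1$ and $\mathrm{Im}(w) < -1$.

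\emph{Choice of discs.} Set $e_1 = b$, $e_2 = -b$, $e_3 = z$, $e_4 = v$, $e_5 = w$, and take $F_1 = \overline{\DD}$. Then $F_1 \subset \{|\mathrm{Im}(\zeta)| \le 1\}$ and $\pm b \in F_1$. For $F_2$ we want a closed bounded disc containing $z$ and $v$ and contained in the open half-plane $\{\mathrm{Im}(\zeta) > 1\}$.

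The disc with diameter $[z,v]$ does not work in general, since it may cross the line $\mathrm{Im} = 1$. Instead, let $m = (z+v)/2$ and, for $R > 0$, let $c_R = m + iR$ and $\rho_R = \sqrt{R^2 + |z-m|^2}$. Since $\mathrm{Re}(c_R) = \mathrm{Re}(m)$, the point $c_R$ lies on the perpendicular bisector of $[z,v]$, because $|c_R - z|^2 = |m - z|^2 + R^2 + 2R\,\mathrm{Im}(z-m)$ and $\mathrm{Im}(z - m) = -\mathrm{Im}(v-m)$, so the cross terms cancel after symmetrizing. To avoid that computation, simply take
\[
\rho_R = \max(|c_R - z|, |c_R - v|).
\]
Then $F_2 = \{|\zeta - c_R| \le \rho_R\}$ contains $z$ and $v$. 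Its lowest point has imaginary part
\[
\mathrm{Im}(m) + R - \rho_R = \mathrm{Im}(m) + R - \sqrt{R^2 + O(R)}\,.
\]
More precisely, $\rho_R \le R + |z - v|$ and $\rho_R - R \to$ a finite limit. A cleaner argument: $\rho_R^2 = R^2 + 2R\delta + O(1)$ with $|\delta| \le |\mathrm{Im}(z-v)|/2$. Hence
\[
\mathrm{Im}(m) + R - \rho_R \to \mathrm{Im}(m) - \delta \ge \min(\mathrm{Im}(z), \mathrm{Im}(v)) > 1 .
\]
So for $R$ large enough, $F_2 \subset \{\mathrm{Im}(\zeta) > 1\}$.

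\emph{Conclusion.} With this choice, $F_1 \cap F_2 = \emptyset$, because $F_1$ lies in $\{\mathrm{Im} \le 1\}$ and $F_2$ in $\{\mathrm{Im} > 1\}$. Both $F_1$ and $F_2$ lie in the convex half-plane $\{\mathrm{Im}(\zeta) \ge -1\}$, so $\conv(F_1\cup F_2)$ does too. Since $\mathrm{Im}(w) < -1$, we get $w \notin \conv(F_1 \cup F_2)$. Lemma~\ref{lemFinishingSplitting} then gives the splittability of the rotated set, and hence of $\{-1,1,\infty\}\cup E$.

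The only delicate step is the asymptotic estimate showing that the lowest point of $F_2$ stays above the line $\mathrm{Im} = 1$. It reduces to the elementary limit $\sqrt{R^2 + 2R\delta + C} - R \to \delta$ together with the bound $\mathrm{Im}(m) - \delta \ge \min(\mathrm{Im}(z), \mathrm{Im}(v))$.
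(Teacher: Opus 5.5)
Your proof is correct and follows the paper's argument: rotate by $a^{-1}$, use $\overline{\DD}$ for $\pm 1/a$ and a closed disc inside $\{\mathrm{Im}>1\}$ for the two points on the same side, then separate the remaining point with the convex half-plane $\{\mathrm{Im}\ge -1\}$ before invoking Lemma~\ref{lemFinishingSplitting}; your disc centered at $m+iR$ just makes explicit the existence claim the paper leaves unproved (indeed $\rho_R^2=R^2+R\,|\mathrm{Im}(z-v)|+|z-v|^2/4$, so $\mathrm{Im}(m)+R-\rho_R\to\min(\mathrm{Im}\,z,\mathrm{Im}\,v)>1$). The aside that $c_R$ lies on the perpendicular bisector of $[z,v]$ is false unless $\mathrm{Im}\,z=\mathrm{Im}\,v$, but it is harmless because you then define $\rho_R$ as a maximum and never use that claim.
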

	\begin{proof}
		Let $a \in \CC$ be the number from Definition~\ref{defSplittingByAStrip} applied to $E$, and choose $z, w \in E$ such that $\mathrm{Im}(z/a) > 1$ and $\mathrm{Im}(w/a) < -1$. Clearly, $z \ne w$, so $E = \{z, w, v\}$ for some $v$ (which also satisfies $|\mathrm{Im}(v/a)| > 1$). Assume for now that $\mathrm{Im}(v/a) > 1$. Then, the points $z/a$ and $v/a$ belong to the half-plane $H = \{t \in \CC: \mathrm{Im}(t) > 1\}$, so there exists a closed bounded disc $D_1$, such that $z/a, v/a \in D_1 \subset H$. Also, the points $1/a$ and $-1/a$ belong to the closed unit disc $D_2 = \overline{\mathbb D}$. Clearly, $D_1$ and $D_2$ are disjoint. Moreover, $D_1 \cup D_2$ is contained in the closed half-plane $G = \{t \in \CC: \mathrm{Im}(t) \ge -1\}$, which is a convex set, so $\conv(D_1 \cup D_2) \subset G$. Therefore, $w/a \notin \conv(D_1 \cup D_2)$. By Lemma~\ref{lemFinishingSplitting} the set 
		\[
		A = \left\{\dfrac{1}{a}, -\dfrac{1}{a}, \dfrac{z}{a}, \dfrac{w}{a}, \dfrac{v}{a}, \infty\right\}
		\]
		is splittable. Since $A$ is the image of $\{-1, 1, \infty\} \cup E$ with respect to a M\"obius transformation (namely, scalar multiplication by $a^{-1}$), by Lemma~\ref{lemProjectiveInvarianceSplittable} $\{-1, 1, \infty\} \cup E$ is splittable. The case when $\mathrm{Im}(v/a) < -1$ is handled similarly.
	\end{proof}
	
	The importance of splittability by strips for us lies in the properties of distinguished sets $E$ which we can derive from the assumption that $E$ is {\it{not}} splittable by a strip. In order to formulate these properties let us introduce the half-strip
	\[
	\Sigma = \{z \in \CC: \mathrm{Re}(z) \ge 0 \text{ and } |\mathrm{Im}(z)| \le 1\}.
	\]
	\begin{definition}
		Let us call two complex numbers $z,w \in \CC$ {\it{approximately collinear}}, if there exists $a \in \TT$ such that $z/a, w/a \in \Sigma$.
	\end{definition}
	
	We shall need the following elementary lemma.
	\begin{lemma}\label{lemArcsin}
		Assume that $z \in \CC$, $|z| > 1$, and $t_0 \in \RR$ is chosen such that $z e^{-it_0}$ is a positive real number. Moreover, let $\alpha = \arcsin(1/|z|)$. Then the following statements hold.
		\begin{enumerate}[label=(\roman*)]
			\item\label{arcsini} Let $t \in \RR$.  Then $|\mathrm{Im}(ze^{-it})| \le 1$ if and only if $t \in \bigcup\limits_{n \in \ZZ} [t_0 + \pi n - \alpha, t_0 + \pi n + \alpha]$.
			\item\label{arcsinii} Let $t \in \RR$.  Then $ze^{-it} \in \Sigma$ if and only if $t \in \bigcup\limits_{n \in \ZZ} [t_0 + 2\pi n - \alpha, t_0 + 2\pi n + \alpha]$.
		\end{enumerate}
	\end{lemma}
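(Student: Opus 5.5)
Write $z = |z|e^{it_0}$, so that $ze^{-it} = |z|e^{i(t_0 - t)}$. Set $s = t - t_0$; then
\[
\mathrm{Im}(ze^{-it}) = -|z|\sin s, \qquad \mathrm{Re}(ze^{-it}) = |z|\cos s .
\]
Both parts of the lemma then reduce to elementary statements about $\sin s$ and $\cos s$.

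For \ref{arcsini}, the condition $|\mathrm{Im}(ze^{-it})| \le 1$ is equivalent to $|\sin s| \le 1/|z| = \sin\alpha$. Here $\alpha \in (0,\pi/2)$ because $|z|>1$.
\begin{itemize}
\item The function $|\sin s|$ has period $\pi$, so it suffices to work on $[-\pi/2,\pi/2]$.
\item On that interval $|\sin s| = \sin|s|$, and $\sin$ is increasing on $[0,\pi/2]$.
\item Hence $|\sin s| \le \sin\alpha$ on $[-\pi/2,\pi/2]$ if and only if $|s| \le \alpha$.
\end{itemize}
Translating by multiples of $\pi$ gives $s \in \bigcup_{n}[\pi n - \alpha, \pi n + \alpha]$, which is the claimed set after substituting $s = t - t_0$.

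For \ref{arcsinii}, the condition $ze^{-it} \in \Sigma$ means $|\sin s| \le \sin\alpha$ together with $\cos s \ge 0$. By part \ref{arcsini}, $s$ lies in some interval $[\pi n - \alpha, \pi n + \alpha]$.
\begin{itemize}
\item If $n$ is even, then $\cos s \ge \cos\alpha > 0$ on the interval, since $\alpha < \pi/2$.
\item If $n$ is odd, then $\cos s \le -\cos\alpha < 0$ on the interval.
\end{itemize}
So exactly the even-$n$ intervals survive, and these are $[2\pi m - \alpha, 2\pi m + \alpha]$. Substituting back $t = t_0 + s$ gives \ref{arcsinii}.

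There is no real obstacle in this argument. The only points needing care are the strict inequality $\alpha < \pi/2$, which comes from $|z| > 1$ and makes the cosine signs strict, and the correct handling of the periodicity reduction.
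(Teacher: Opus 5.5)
Your proposal is correct and follows the same approach as the paper, which also reduces (i) to $|\sin(t-t_0)| \le 1/|z|$ and (ii) to that condition together with $\cos(t-t_0) \ge 0$. You simply write out the elementary details the paper leaves implicit: the periodicity reduction, the monotonicity of $\sin$ on $[0,\pi/2]$, and the sign of $\cos$ on the even and odd intervals using $\alpha<\pi/2$.
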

	\begin{proof}
		Indeed, $|\mathrm{Im}(ze^{-it})| \le 1$ if and only if $|\sin(t - t_0)| \le 1/|z|$. The statement~\ref{arcsini} follows. To prove~\ref{arcsinii} note, that $|\mathrm{Im}(ze^{-it})| \le 1$ and $\mathrm{Re}(ze^{-it}) \ge 0$ if and only if $|\sin(t - t_0)| \le 1/|z|$ and $\cos(t - t_0) \ge 0$.
	\end{proof}

	Before proving the next statement we note that for all $z \in \CC \setminus \Omega$ we have the inequality $|z| \ge \sqrt{3}$. We shall use this fact freely throughout the text.
	
	\begin{lemma}\label{lemIfNotAsteriskThenNoOppositeStrips}
		Assume that $E \subset \CC$ is distinguished and that there exist $w,z \in E$ such that $\conv\{z, w\} \cap \overline{\mathbb D} \ne \emptyset$ (i.e. the line segment from $z$ to $w$ intersects the closed unit disc). Then $E$ is splittable by a strip.
	\end{lemma}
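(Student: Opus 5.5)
The plan is to work with the direction of the strip. For $a=e^{it}$ the condition in Definition~\ref{defSplittingByAStrip} depends only on $t$ modulo $\pi$. Replacing $a$ by $-a$ merely swaps which of $z,w$ lies above and which below, so it suffices to find one suitable $t$ modulo $\pi$.

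\textbf{Step 1 (the forbidden and the separating sets of directions).} For each $u\in E$ put $\alpha_u=\arcsin(1/|u|)$. Since $|u|\ge\sqrt3$, we have $\alpha_u\le\arcsin(1/\sqrt3)<\pi/5$. By Lemma~\ref{lemArcsin}\ref{arcsini}, the directions with $|\mathrm{Im}(ue^{-it})|\le 1$ form a single closed interval modulo $\pi$. It is centred at $\arg u$ and has half-width $\alpha_u$; these are the forbidden directions for $u$.

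Next, $\mathrm{Im}(ze^{-it})$ and $\mathrm{Im}(we^{-it})$ have opposite signs exactly when the line $e^{it}\RR$ separates $z$ from $w$. Modulo $\pi$, this happens on an open interval of directions between $\arg z$ and $\arg w$ whose length is $\theta=\angle zOw\in(0,\pi]$. Removing the forbidden neighbourhoods of its two endpoints leaves an open interval $G$ of length $\theta-\alpha_z-\alpha_w$. Any $t\in G$ that is not forbidden for $v$, the third point of $E$, gives the required $a$.

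\textbf{Step 2 (lower bound on $\theta$ from the hypothesis).} Since the segment $[z,w]$ meets $\overline{\mathbb D}$, the angle at $z$ in the triangle $Ozw$ is at most the half-angle $\alpha_z$ of the tangent cone from $z$ to the unit circle. The same holds at $w$. Hence $\theta\ge\pi-\alpha_z-\alpha_w$, and therefore
\[
|G|\ge \pi-2\alpha_z-2\alpha_w .
\]
I will sharpen this using that the segment actually reaches the disc: the extremal configuration is the segment tangent to $\mathbb T$, where $\theta=\arccos(1/|z|)+\arccos(1/|w|)$. In all cases $|G|>0$, so a line separating $z$ and $w$ with both points outside the strip exists.

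\textbf{Step 3 (dealing with $v$; the main obstacle).} The forbidden interval of $v$ has length $2\alpha_v\le 2\arcsin(1/\sqrt3)$. This can exceed the guaranteed length of $G$, which is only about $39^\circ$ in the worst case $|z|=|w|=\sqrt3$ with a tangent segment. So I must show that the forbidden interval of $v$ cannot cover all of $G$.

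If it does cover $G$, then $\arg v$ modulo $\pi$ lies close to the middle of $G$. That means $v$ lies near the bisecting ray of the angle $zOw$, or near its opposite ray. I would exclude both possibilities with the distinguished-set conditions: $|v-z|\ge2$, $|v-w|\ge2$, $|z-w|\ge2$ and $v\notin\Omega$.

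First, with $v$ on the same side as $z$ and $w$, I would treat the case where $\theta$ is small. Then $z$ and $w$ are far out, which makes the $\alpha$'s small and $|G|$ large. Alternatively, $v$ near the middle direction must be squeezed between $z$ and $w$, contradicting the distance bounds.

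Second, with $v$ on the opposite side, a large $|v|$ again makes $\alpha_v$ small. A small $|v|$ forces $v$ near $\pm\sqrt3 i$ or near the boundary of $\Omega$. In that case the segment geometry pins $z$ and $w$ down enough to bound the angles.

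I expect this final case analysis to be the hardest part. The first thing I would try is to reduce it to a monotonicity argument in $|z|$, $|w|$ and $|v|$, checking the boundary case $|z|=|w|=|v|=\sqrt3$ explicitly.
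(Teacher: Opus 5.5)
\textbf{There is a genuine gap: Step 3, the heart of the lemma, is only a plan.}

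Your Steps 1 and 2 are essentially correct. One small slip: when $\theta$ is close to $\pi$, the forbidden arc of $z$ can wrap past $\arg w$ into the separating arc, so $|G|$ may be smaller than $\theta-\alpha_z-\alpha_w$. Your lower bound $|G|\ge\pi-2\alpha_z-2\alpha_w$ still holds.

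But these steps only give a line with $z,w$ on opposite sides of the strip, which is the easy part. Nothing in Step 3 is proved:
\begin{itemize}
\item Having the forbidden arc of $v$ cover $G$ only places $\arg v$ within $\alpha_v-|G|/2$ of the midpoint of $G$.
\item The claims that $v$ is then ``squeezed between $z$ and $w$'', or that $z,w$ are ``pinned down'', are not substantiated.
\end{itemize}

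Your fallback (monotonicity in $|z|,|w|,|v|$ plus a check of $|z|=|w|=|v|=\sqrt3$) cannot work as stated.
\begin{itemize}
\item The only points of $\CC\setminus\Omega$ of modulus $\sqrt3$ are $\pm\sqrt3 i$, so that boundary case is empty.
\item Your ``worst case'' $|z|=|w|=\sqrt3$ with a tangent segment is not realizable either, since the segment from $-\sqrt3 i$ to $\sqrt3 i$ passes through $0$.
\item More fundamentally, the bound $|u|\ge\sqrt3$ alone is insufficient: three forbidden arcs of length $2\arcsin(1/\sqrt3)\approx 70.5^\circ$ can cover all $180^\circ$ of directions.
\end{itemize}
Any proof must use where points of small modulus can actually sit outside $\Omega$.

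The paper's proof rests on two ideas that are missing here.

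First, the hypothesis on the segment is needed only qualitatively. Suppose $b\in\TT$ satisfies $|\mathrm{Im}(u/b)|>1$ for all three $u\in E$. Then $z/b$ and $w/b$ automatically lie on opposite sides. Otherwise the segment between them would lie in $\{\mathrm{Im}>1\}$ or $\{\mathrm{Im}<-1\}$, and both of these miss $\overline{\mathbb D}$. So $\theta$ and $G$ are unnecessary. It suffices to show that the three sets $C_u=\{a\in\TT:|\mathrm{Im}(u/a)|\le 1\}$, each of measure $4\arcsin(1/|u|)$, do not cover $\TT$.

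Second, to beat the crude count, test the two specific directions $b_0=(4+3i)/5$ and $\overline{b_0}$.
\begin{itemize}
\item The boundary lines of the corresponding strips pass through $\pm(1+2i)$ and $\pm(1-2i)$ respectively.
\item Hence each strip meets $\CC\setminus\Omega$ only in points of modulus $\ge\sqrt5$, and the two intersections are disjoint.
\item So if every direction were blocked, two distinct points of $E$ would have modulus $\ge\sqrt5$.
\item The blocked set would then have measure at most $4\arcsin(1/\sqrt3)+8\arcsin(1/\sqrt5)<2\pi$, a contradiction.
\end{itemize}
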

	\begin{proof}
		In order to prove this it suffices to prove that for any distinguished set $E$ there exists $b \in \TT$ such that $|\mathrm{Im}(z/b)| > 1$ for all $z \in E$.
		Indeed, take $z,w \in E$ such that $\conv\{z, w\} \cap \overline{\mathbb D} \ne \emptyset$. If $b$ is chosen as above, then $ \mathrm{Im}(z/b)$ and $\mathrm{Im}(w/b)$ have opposite signs (otherwise the line segment from $z/b$ and $w/b$ would be contained in a half-plane that does not intersect $\overline{\mathbb D}$), so $E$ is splittable by a strip (with $b$ being the corresponding rotation).
		
		In order to prove the foregoing statement for $z \in \CC$ we introduce the set $C_z = \{a \in \TT:, |\mathrm{Im}(z/a)| \le 1\}$. From Lemma~\ref{lemArcsin}~\ref{arcsini} it follows that for $z$ such that $|z| > 1$ the set $C_z$ has angular measure $4\arcsin(1/|z|)$. Now assume that $E$ is a distinguished set such that there is no $b \in \TT$ such that  $|\mathrm{Im}(z/b)| > 1$ for all $z \in E$. Then for any $b \in \TT$ there exists $z \in E$ such that $|\mathrm{Im}(z/b)| \le 1$. In particular, we can apply this to $b_0$ such that $\mathrm{Im}((1 + 2i)/b_0) = 1$ and $\mathrm{Re}((1 + 2i)/b_0) > 0$ (it can be easily seen that $b_0 = (1 + 2i)/(2 + i) = (4 + 3i)/5$). Thus, there exists $z_1 \in E$ such that $|\mathrm{Im}(z_1/b_0)| \le 1$. Since $z_1 \notin \Omega$, it follows that $|z_1| \ge \sqrt{5}$. Similarly (applying previous considerations to $b_0^{-1}$) there exists $z_2 \in E$ such that $|\mathrm{Im}(z_2b_0)| \le 1$. As before, we can conclude that $|z_2| \ge \sqrt{5}$. Moreover, it is straightforward to check that the sets
		\begin{equation}\label{eqAandB}
		A  = \{z \in \CC \setminus \Omega: |\mathrm{Im}(z/b_0)| \le 1\} \text{ and } B  = \{z \in \CC \setminus \Omega: |\mathrm{Im}(zb_0)| \le 1\} 
		\end{equation}
		are disjoint, so $z_1 \ne z_2$ (see Fig.~\ref{figAandBaredisjoint}).
		\begin{figure}
			\begin{tikzpicture}
				\draw[black, dashed, domain = -120:-90] plot({1 + 2*cos(\x)}, {2*sin(\x)});
				\draw[black, domain = -90:{-atan((-4 + 6*sqrt(6))/(3 + 8*sqrt(6)))}] plot({1 + 2*cos(\x)}, {2*sin(\x)});
				\draw[black, dashed, domain = {-atan((-4 + 6*sqrt(6))/(3 + 8*sqrt(6)))}:{atan((-4 + 6*sqrt(6))/(3 + 8*sqrt(6)))}] plot({1 + 2*cos(\x)}, {2*sin(\x)});
				\draw[black, domain = {atan((-4 + 6*sqrt(6))/(3 + 8*sqrt(6)))}:90] plot({1 + 2*cos(\x)}, {2*sin(\x)});
				\draw[black, dashed, domain = 90:120] plot({1 + 2*cos(\x)}, {2*sin(\x)});
				
				\draw[black, dashed, domain = -120:-90] plot({-1 - 2*cos(\x)}, {2*sin(\x)});
				\draw[black, domain = -90:{-atan((-4 + 6*sqrt(6))/(3 + 8*sqrt(6)))}] plot({-1 - 2*cos(\x)}, {2*sin(\x)});
				\draw[black, dashed, domain = {-atan((-4 + 6*sqrt(6))/(3 + 8*sqrt(6)))}:{atan((-4 + 6*sqrt(6))/(3 + 8*sqrt(6)))}] plot({-1 - 2*cos(\x)}, {2*sin(\x)});
				\draw[black, domain = {atan((-4 + 6*sqrt(6))/(3 + 8*sqrt(6)))}:90] plot({-1 - 2*cos(\x)}, {2*sin(\x)});
				\draw[black, dashed, domain = 90:120] plot({-1 - 2*cos(\x)}, {2*sin(\x)});
				
				\draw[dashed] (0,0) circle[radius = 1];
				\filldraw (1, 2) circle[radius = 0.05] node[below right, yshift=-3, xshift = -6] {$1+2i$};
				\filldraw ({4/5}, {3/5}) circle[radius = 0.05] node[above right] {$b_0$};
				\draw ({-17/3}, -3) -- ({-(31 + 16*sqrt(6))/25}, {-(-8 + 12*sqrt(6))/25});
				\draw[dashed] ({-(31 + 16*sqrt(6))/25}, {-(-8 + 12*sqrt(6))/25}) -- (1,2);
				\draw (1,2) -- ({7/3},3);
				
				\draw ({-17/3}, 3) -- ({-(31 + 16*sqrt(6))/25}, {(-8 + 12*sqrt(6))/25});
				\draw[dashed] ({-(31 + 16*sqrt(6))/25}, {(-8 + 12*sqrt(6))/25}) -- (1,-2);
				\draw (1,-2) -- ({7/3},-3);

				\draw ({17/3}, 3) -- ({(31 + 16*sqrt(6))/25}, {(-8 + 12*sqrt(6))/25});
				\draw[dashed] ({(31 + 16*sqrt(6))/25}, {(-8 + 12*sqrt(6))/25}) -- (-1,-2);
				\draw (-1, -2) -- ({-7/3},-3);
				
				\draw ({17/3}, -3) -- ({(31 + 16*sqrt(6))/25}, {-(-8 + 12*sqrt(6))/25});
				\draw[dashed] ({(31 + 16*sqrt(6))/25}, {-(-8 + 12*sqrt(6))/25}) -- (-1,2);
				\draw (-1, 2) -- ({-7/3},3);
				
				\node at (2.5, 2) {$A$};
				\node at (-2.5, -2) {$A$};
				\node at (2.5, -2) {$B$};
				\node at (-2.5, 2) {$B$};
			\end{tikzpicture}
			\caption{The sets $A$ and $B$ from~\eqref{eqAandB}.}
			\label{figAandBaredisjoint}
		\end{figure}
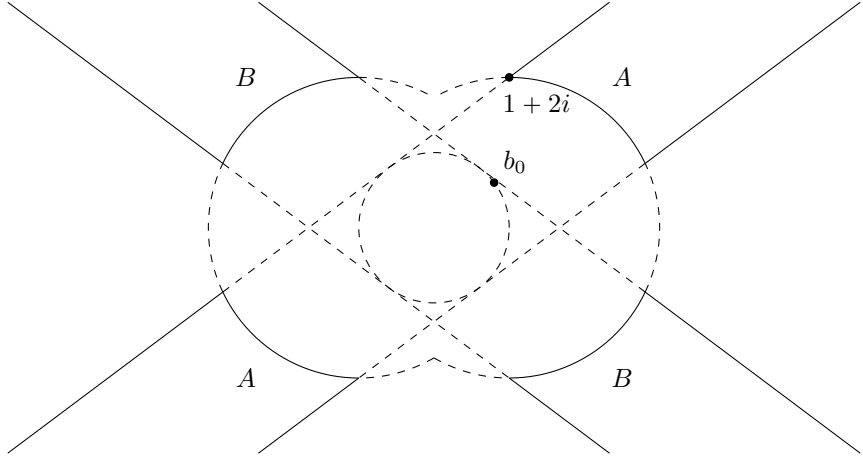 
		Finally, for the third point $z_3 \in E$ we just use the estimate $|z_3| \ge \sqrt{3}$, since $z_3 \notin \Omega$. Therefore, the angular measure of the set $C_{z_1} \cup C_{z_2} \cup C_{z_3}$ can be estimated from above by $\alpha = 4\arcsin(1/\sqrt{3}) + 8\arcsin(1/\sqrt{5})$. Since $\alpha < 2\pi$, we can conclude that there exists $b \in \TT$ such that $b \notin C_{z_1} \cup C_{z_2} \cup C_{z_3}$. This $b$ satisfies $|\mathrm{Im}(z/b)| > 1$ for all $z \in E$.
	\end{proof}
	
	\begin{proposition}\label{propIfSharpNotAsteriskThenStrips}
		Assume that $E \subset \CC$ is a distinguished set that is not splittable by a strip. Then for any $z \in E$ there exists $w \in E \setminus \{z\}$ such that $z$ and $w$ are approximately collinear.
	\end{proposition}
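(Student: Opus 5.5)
Assume, for a contradiction, that some $z \in E$ is approximately collinear with neither of the other two points. The idea is to rotate the strip so that $z$ crosses it from one side to the other while the other two points stay outside it; at one end of this rotation the strip then separates $E$.

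\textbf{Setup.} Write $E = \{z, w_1, w_2\}$, and let $t_0 = \arg z$ and $\alpha = \arcsin(1/|z|)$; this is well defined because $|z| \ge \sqrt{3} > 1$. Set $I = [t_0 - \alpha, t_0 + \alpha]$. By Lemma~\ref{lemArcsin}~\ref{arcsinii}, $ze^{-it} \in \Sigma$ exactly for $t \in I + 2\pi\ZZ$. As $t$ runs over $I$, the imaginary part $\mathrm{Im}(ze^{-it}) = |z|\sin(t_0 - t)$ decreases from $1$ to $-1$. Hence for small $\varepsilon > 0$,
\[
\mathrm{Im}(ze^{-i(t_0-\alpha-\varepsilon)}) > 1 \quad\text{and}\quad \mathrm{Im}(ze^{-i(t_0+\alpha+\varepsilon)}) < -1 .
\]

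\textbf{Key step: $w_j$ avoids the strip for $t$ near $I$.} Fix $j$. By Lemma~\ref{lemArcsin}, the set of $t$ with $|\mathrm{Im}(w_je^{-it})| \le 1$ is $(J_j \cup (J_j + \pi)) + 2\pi\ZZ$, where $J_j$ is the closed arc on which $w_je^{-it} \in \Sigma$. On $J_j + \pi$ we have instead $w_je^{-it} \in -\Sigma$.
\begin{itemize}
\item $J_j$ does not meet $I$ (mod $2\pi$): otherwise $z$ and $w_j$ would be approximately collinear, contrary to our assumption.
\item $J_j + \pi$ does not meet $I$ either. Suppose some $t$ lies in both, and put $a = e^{it}$. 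Then $z/a \in \Sigma$ and $w_j/a \in -\Sigma$. So $\mathrm{Re}(z/a) \ge 0 \ge \mathrm{Re}(w_j/a)$, and both imaginary parts lie in $[-1,1]$. The segment from $z/a$ to $w_j/a$ therefore meets the imaginary axis at a point $iy$ with $|y| \le 1$, a point of $\overline{\mathbb D}$. Rotating back, $\conv\{z, w_j\} \cap \overline{\mathbb D} \ne \emptyset$. Lemma~\ref{lemIfNotAsteriskThenNoOppositeStrips} then says $E$ is splittable by a strip, a contradiction.
\end{itemize}
These arcs are closed and disjoint from the compact set $I$, so for $\varepsilon > 0$ small enough $|\mathrm{Im}(w_je^{-it})| > 1$ for all $t \in [t_0 - \alpha - \varepsilon, t_0 + \alpha + \varepsilon]$ and $j = 1,2$. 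By continuity, the sign of $\mathrm{Im}(w_je^{-it})$ is constant on this interval.

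\textbf{Conclusion.} Consider two cases.
\begin{itemize}
\item If some $w_j$ has negative sign, take $a = e^{i(t_0-\alpha-\varepsilon)}$. Then $\mathrm{Im}(z/a) > 1$, $\mathrm{Im}(w_j/a) < -1$, and all three points satisfy $|\mathrm{Im}(\cdot/a)| > 1$.
\item Otherwise both $w_j$ have $\mathrm{Im} > 1$ throughout. Take $a = e^{i(t_0+\alpha+\varepsilon)}$, where $\mathrm{Im}(z/a) < -1$.
\end{itemize}
In either case $E$ is splittable by a strip, contradicting the hypothesis. This proves the proposition.

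The main obstacle is the key step, namely excluding the antipodal arc $J_j + \pi$; the segment-through-the-disc argument together with Lemma~\ref{lemIfNotAsteriskThenNoOppositeStrips} is exactly what handles it. The rest is routine continuity.
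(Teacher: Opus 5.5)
Your proof is correct and follows the paper's argument. You take the arc of rotations that puts $z$ into $\Sigma$, show that the other two points stay strictly outside the strip on that arc (either approximate collinearity, or the segment meets $\overline{\mathbb D}$ so that Lemma~\ref{lemIfNotAsteriskThenNoOppositeStrips} applies), and then rotate slightly past the appropriate endpoint. Your explicit segment-crossing argument and the uniform choice of $\varepsilon$ by compactness just make precise steps the paper states briefly.
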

	\begin{proof}
		Assume that there is $z \in E$ such that $z$ and $w$ are not approximately collinear for all $w \in E \setminus \{z\}$. Consider $\gamma = \{a \in \TT: z/a \in \Sigma\}$. It is easy to see that $\gamma$ is a {\it{connected}} closed subset of the unit circle (see Lemma~\ref{lemArcsin}~\ref{arcsinii}). We claim that for all $w \in E\setminus \{z\}$ and $a \in \gamma$ the inequality $|\mathrm{Im}(w/a)| > 1$ holds. Indeed, if $|\mathrm{Im}(w/a)| \le 1$, then either $\mathrm{Re}(w/a) \ge 0$ (which means that $z$ and $w$ are approximately collinear), or $\mathrm{Re}(w/a) \le 0$ (which means that $E$ is splittable by a strip in view of Lemma~\ref{lemIfNotAsteriskThenNoOppositeStrips}). Now let $\{w, v\} = E\setminus \{z\}$. Then either $\mathrm{Im}(w/a) > 1$ for all $a \in \gamma$, or $\mathrm{Im}(w/a) < -1$ for all $a \in \gamma$, since $\gamma$ is connected. Assume that the inequality $\mathrm{Im}(w/a) > 1$ holds for all $a \in \gamma$. Then consider $b \in \gamma$ such that $\mathrm{Im}(z/b) = -1$. Then it is possible to choose small enough $\varepsilon > 0$ such that $\mathrm{Im}(z/(be^{i\varepsilon})) < -1$, $\mathrm{Im}(w/(be^{i\varepsilon})) > 1$, $|\mathrm{Im}(v/(be^{i\varepsilon}))| > 1$. So, $E$ is splittable by a strip with $be^{i\varepsilon}$ being the corresponding rotation. The case when $\mathrm{Im}(w/a) < -1$ holds for all $a \in \gamma$ is treated similarly (by small perturbation of $b \in \gamma$ such that $\mathrm{Im}(z/b) = 1$).
	\end{proof}
	
	Proposition~\ref{propIfSharpNotAsteriskThenStrips} already significantly restricts the positions of points in distinguished three-element sets $E$ for which it is not yet proved that $E \cup \{-1, 1, \infty\}$ is splittable. However, we shall need a more precise description of distinguished sets that are not splittable by a strip.
	
	\begin{proposition}\label{propIfSharpNotAsteriskThenCases}
		Assume that $E \subset \CC$ is a distinguished set that is not splittable by a strip. Then one of the following statements holds.
		\begin{enumerate}[label=(\roman*)]
			\item\label{casei} There exists $a \in \TT$ such that $z/a \in \Sigma$ for all $z \in E$.
			\item\label{caseii} There exists $a\in \TT$ and some enumeration $E = \{z_1, z_2, z_3\}$ such that the following statements hold.
			\begin{enumerate}[label=(\Roman*)]
				\item\label{I} $\mathrm{Re}(z_1/a) \ge 0$ and $\mathrm{Im}(z_1/a) = 1$ (in particular, $z_1/a \in \Sigma$).
				\item\label{II} $z_2 /a \in \Sigma$.
				\item\label{III} $\mathrm{Im}(z_3/a) < -1$.
				\item\label{IV} $z_2$ and $z_3$ are approximately collinear.
			\end{enumerate}
		\end{enumerate}
	\end{proposition}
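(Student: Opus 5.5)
We argue by rotating a half-strip and tracking which points of $E$ it contains. By Proposition~\ref{propIfSharpNotAsteriskThenStrips}, each point of $E$ is approximately collinear with some other point of $E$. Since $E$ has three points, the graph on $E$ whose edges are the approximately collinear pairs has no isolated vertex. Hence it contains a path, say $z_1$--$z_2$--$z_3$, with $z_1,z_2$ approximately collinear and $z_2,z_3$ approximately collinear. If all three points lie in one rotated copy $a\Sigma$, case~\ref{casei} holds. So we assume no single rotation of $\Sigma$ contains all of $E$, and we aim for case~\ref{caseii}.

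For $z\in E$ set $\gamma_z=\{a\in\TT: z/a\in\Sigma\}$. By Lemma~\ref{lemArcsin}~\ref{arcsinii}, $\gamma_z$ is a closed arc of angular half-width $\arcsin(1/|z|)\le\arcsin(1/\sqrt3)<\pi/4$. Approximate collinearity of $z,w$ means exactly $\gamma_z\cap\gamma_w\neq\emptyset$.

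\begin{enumerate}[label=(\arabic*)]
\item Since all arcs are short, the arcs $\gamma_{z_1},\gamma_{z_2},\gamma_{z_3}$ lie in a half-circle. Helly's property for intervals then says that if all pairwise intersections were nonempty, the triple intersection would be nonempty, which is case~\ref{casei}. So, after relabeling, $\gamma_{z_1}\cap\gamma_{z_2}\ne\emptyset$, $\gamma_{z_2}\cap\gamma_{z_3}\ne\emptyset$ and $\gamma_{z_1}\cap\gamma_{z_3}=\emptyset$.
\item The arc $\gamma_{z_1}\cap\gamma_{z_2}$ is disjoint from $\gamma_{z_3}$. Take $a$ to be its endpoint facing away from $\gamma_{z_3}$, chosen so that at $a$ the point $z_1/a$ lies on the boundary line $\mathrm{Im}=1$. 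Which endpoint achieves this, and whether it is $z_1$ or $z_2$ that hits the boundary, depends on orientation; we may swap the names of $z_1$ and $z_2$ and use complex conjugation symmetry, which preserves $\Sigma$, $\Omega$ and distinguishedness, to reduce to this situation. This gives \ref{I} and~\ref{II}.
\item We still need~\ref{III}, namely $\mathrm{Im}(z_3/a)<-1$. Since $a\notin\gamma_{z_3}$, either $|\mathrm{Im}(z_3/a)|>1$ or $z_3/a$ lies in the opposite half-strip. The opposite half-strip is excluded by Lemma~\ref{lemIfNotAsteriskThenNoOppositeStrips}: there the segment from $z_3$ to $z_1$ would meet $\overline{\mathbb D}$, making $E$ splittable by a strip. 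If $\mathrm{Im}(z_3/a)>1$ instead, we rotate $a$ slightly beyond the endpoint. Then $\mathrm{Im}(z_1/a)>1$ and $\mathrm{Im}(z_2/a)>1$, or $z_2$ leaves through the other side. Using $\mathrm{Im}(z_3/a)>1$ and the absence of strip splittability, we derive a contradiction, or else $z_2$ is forced to exit through $\mathrm{Im}=-1$. This gives a strip-splitting, exactly as in the perturbation argument of Proposition~\ref{propIfSharpNotAsteriskThenStrips}. Condition~\ref{IV} holds by construction.
\end{enumerate}

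The main obstacle is step~(3), the sign bookkeeping: we must show that at the chosen endpoint $z_3$ lies strictly below the strip rather than above it. The first approach is a continuity argument along the arc $\gamma_{z_2}$, moving from the $z_1$ side toward $\gamma_{z_3}$. When $a$ enters $\gamma_{z_3}$, the point $z_3/a$ enters the half-strip $\Sigma$ through one boundary line. The arcs are disjoint from $\gamma_{z_1}$ and ordered along the circle, and this ordering pins down on which side $z_3/a$ sits at the chosen endpoint. Choosing the endpoint $a$ of $\gamma_{z_1}\cap\gamma_{z_2}$ appropriately, and reflecting if necessary, should then yield~\ref{III}.
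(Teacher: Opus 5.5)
\textbf{There is a genuine gap: in step (2) you choose the wrong endpoint, and the contradiction promised in step (3) does not exist.}

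Your setup and step (1) are fine. (The three arcs need not lie in a half-circle. But their union has length less than $3\pi/2$, so it misses a point of $\TT$, and Helly's theorem applies after cutting the circle there.) The problem in steps (2)--(3) is not mere sign bookkeeping.

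As $a=e^{it}$ moves counterclockwise through $\gamma_z$, $\mathrm{Im}(z/a)$ decreases from $1$ to $-1$, and it is $<-1$ right after $\gamma_z$. Suppose $\gamma_{z_3}$ comes before $\gamma_{z_1}$, counterclockwise, inside $\gamma_{z_2}$. At the endpoint of $\gamma_{z_1}\cap\gamma_{z_2}$ facing away from $\gamma_{z_3}$, the point on the boundary ($z_1$ or $z_2$) has imaginary part $-1$, while $\mathrm{Im}(z_3/a)<-1$. In the mirror orientation both lie above the strip. So, after your normalization, the branch $\mathrm{Im}(z_3/a)>1$ that you hope to refute is exactly what always happens at your endpoint. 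No contradiction or strip-splitting can be extracted from it.

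A concrete example: $E=\{3+i,\,6,\,10-2i\}$ is distinguished, is not splittable by a strip, and satisfies (ii) with $a=1$. Yet at your endpoint $a=e^{i\arcsin(1/6)}$ one gets $\mathrm{Im}(6/a)=-1$ and $\mathrm{Im}((10-2i)/a)=-(5+\sqrt{35})/3$.

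Your normalizing moves are also unavailable:
\begin{itemize}
\item Swapping $z_1$ and $z_2$ destroys (IV), because $\gamma_{z_1}\cap\gamma_{z_3}=\emptyset$.
\item Conjugation is not a symmetry of the statement: it turns a configuration of type (ii) into one with $\mathrm{Im}(z_1/a)=-1$ and $\mathrm{Im}(z_3/a)>1$.
\end{itemize}

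What is needed is what the paper does with $I_1$, $I_3$ and $t=\inf I_1$. First fix the orientation by swapping $z_1$ and $z_3$, so that $\gamma_{z_3}$ comes first. This swap is legitimate because both points are approximately collinear with $z_2$. Then take $a$ to be the endpoint of $\gamma_{z_1}\cap\gamma_{z_2}$ \emph{facing} $\gamma_{z_3}$. This is the initial endpoint of $\gamma_{z_1}$, and it lies in $\gamma_{z_2}$, so $\mathrm{Im}(z_1/a)=1$ and $z_2/a\in\Sigma$.

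For (III) it is not enough to know that $\mathrm{Im}(z_3/a)<-1$ just past $\gamma_{z_3}$. You need this on all of $\gamma_{z_2}$ beyond $\gamma_{z_3}$. That holds for the following reason. The set $\{t:|\mathrm{Im}(z_3e^{-it})|\le 1\}$ consists of intervals of length $2\arcsin(1/|z_3|)$ whose centres are $\pi$ apart. Since $\arcsin(1/|z_2|)+\arcsin(1/|z_3|)<\pi/2$, the window $\gamma_{z_2}$ meets only one of them. This is the content of the auxiliary claims (a) and (b) in the paper's proof.
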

	\begin{proof}
		
		In order to prove this we need some auxiliary statements. Assume that $p,q \in \CC$ are approximately collinear and $|p|, |q| \ge \sqrt{3}$. Choose $t_0 \in \RR$ such that $e^{-it_0} p$ is a positive real number and let $\alpha = \arcsin(1/|p|)$. Then the following statements hold.
		\begin{enumerate}[label=(\alph*)]
			\item\label{auxa} The set 
			\[I = \{t \in [t_0 - \alpha, t_0 + \alpha]: |\mathrm{Im}(qe^{-it})| \le 1\} \]
			is nonempty closed and connected (i.e. it is a nonempty closed interval). Moreover, $\mathrm{Re}(qe^{-it}) \ge 0$ for all $t \in I$ (that is, $qe^{-it} \in \Sigma$ for $t \in I$).
			\item\label{auxb} Let $s \in [t_0 - \alpha, t_0 + \alpha] \setminus I$. If $s < t$ for some $t \in I$, then $\mathrm{Im}(qe^{-is}) > 1$, and if $s > t$ for some $t \in I$, then $\mathrm{Im}(qe^{-is}) < -1$.
		\end{enumerate}
		To prove~\ref{auxa} note that $I$ is, obviously, closed. To prove that $I$ is connected note that the set $\tilde I = \{t \in \RR: |\mathrm{Im}(qe^{-it})| \le 1\}$ can be (by Lemma~\ref{lemArcsin}~\ref{arcsini}) represented as
		\[
		\tilde I = \bigcup_{n \in \ZZ} [\pi n + s_0 - \beta, \pi n + s_0 + \beta],
		\]
		where $s_0$ is chosen such that $qe^{-is_0}$ is a positive real number, and $\beta = \arcsin(1/|q|)$. Since $I = [t_0 - \alpha, t_0 + \alpha] \cap \tilde I$, the set $I$ can be disconnected only if the interval $[t_0 - \alpha, t_0 + \alpha]$ intersects at least two of the intervals $[\pi n + s_0 - \beta, \pi n + s_0 + \beta]$, $n \in \ZZ$. But this would imply that $2 \alpha + 2\beta \ge \pi$, so $\arcsin(1/\sqrt{3})\ge \pi/4$, which is not true. Thus, $I$ is connected. Moreover, since $|q| \ge \sqrt{3}$ the inequality $|\mathrm{Im}(qe^{-it})| \le 1$ implies that $\mathrm{Re}(qe^{-it}) \ne 0$. Since $I$ is connected, either $\mathrm{Re}(qe^{-it}) > 0$ for all $t \in I$, or $\mathrm{Re}(qe^{-it}) < 0$ for all $t \in I$. Note, however, that since $p$ and $q$ are approximately collinear (in view of Lemma~\ref{lemArcsin}~\ref{arcsinii}) there exists $t \in [t_0 - \alpha, t_0 + \alpha]$ such that $|\mathrm{Im}(qe^{-it})| \le 1$ and $\mathrm{Re}(qe^{-it}) \ge 0$. Thus, $I$ is nonempty and the inequality $\mathrm{Re}(qe^{-it}) \ge 0$ holds for all $t \in I$. That is, we proved~\ref{auxa}. Finally, to prove~\ref{auxb} let 
		\[
		\begin{gathered}
			A = \{s \in [t_0 - \alpha, t_0 + \alpha] \setminus I: \exists t \in I \text{ such that } s < t\}, \\ B = \{s \in [t_0 - \alpha, t_0 + \alpha] \setminus I: \exists t \in I \text{ such that } t < s\}.
		\end{gathered}
		\]
		Clearly, $A$ and $B$ are disjoint, connected, and $[t_0 -\alpha, t_0 + \alpha] \setminus I = A \cup B$ (note that these sets can be empty). Assume that $A$ is nonempty. Since $A$ is connected, either $\mathrm{Im}(qe^{-is}) > 1$ for all $s \in A$, or $\mathrm{Im}(qe^{-is}) < -1$ for all $s \in A$. Let $t = \inf I$. Since the function $s \mapsto \mathrm{Im}(qe^{-is})$ is decreasing on $I$, it is clear that $\mathrm{Im}(qe^{-it}) = 1$, so for $s \in A$ we have $\mathrm{Im}(qe^{-is}) > 1$. The fact that for $s \in B$ we have $\mathrm{Im}(qe^{-is}) < -1$ is proved similarly.
		
		Now we prove the statement of Proposition~\ref{propIfSharpNotAsteriskThenCases}. Due to Proposition~\ref{propIfSharpNotAsteriskThenStrips} we can enumerate the elements of $E$ as $E = \{z_1, z_2, z_3\}$, such that $z_1$ and $z_2$ are approximately collinear, and $z_2$ and $z_3$ are approximately collinear.  Let $t_0$ be chosen such that $z_2 e^{-it_0}$ is a positive real number, and let $\alpha = \arcsin(1/|z_2|)$. Moreover, let
		\[
		\begin{gathered}
			I_1 = \{t \in [t_0 - \alpha, t_0 + \alpha]:  |\mathrm{Im}(z_1 e^{-it})| \le 1\},\\	I_3 = \{t \in [t_0 - \alpha, t_0 + \alpha]:  |\mathrm{Im}(z_3 e^{-it})| \le 1\}.
		\end{gathered}
		\]
		From~\ref{auxa} we see that $I_1$ and $I_3$ are nonempty closed intervals. If $I_1 \cap I_2 \ne \emptyset$, then by choosing $t \in I_1 \cap I_3$ we obtain that statement~\ref{casei} holds with $a = e^{it}$. On the other hand, if $I_1 \cap I_3 = \emptyset$, then (by exchanging $z_1$ and $z_3$ if needed) we can assume that $s < r$ for all $s \in I_3$ and $r \in I_1$. Let $t = \inf I_1$ and $a = e^{it}$. Then with this choice of $a$ we have $\mathrm{Im}(z_1/a) = 1$ and $\mathrm{Im}(z_3/a) < -1$ by~\ref{auxb}. Thus, with this choice of $a$ we obtain properties~\ref{I},~\ref{II}, and~\ref{III} of~\ref{caseii}. Moreover, $z_2$ and $z_3$ are approximately collinear by the choice of ordering, so~\ref{IV} holds.
		
	\end{proof}
	
	We conclude the section with the following lemma, which may be used in conjunction with the case~\ref{caseii} of Proposition~\ref{propIfSharpNotAsteriskThenCases}, but also will be applied on its own. It states, in particular, that given the case~\ref{caseii} of Proposition~\ref{propIfSharpNotAsteriskThenCases} the points $z_1$ and $z_3$ are not approximately collinear.
	
	\begin{lemma}\label{lemIfLargeImaginaryOppositeSignsThenNotApprox}
		Assume that $z_1, z_2 \in \CC$, $\mathrm{Im}(z_1) \ge 1$ and $\mathrm{Im}(z_2) \le -1$. If at least one of these inequalities is strict (i.e. $\max\{|\mathrm{Im}(z_j)|, j = 1,2\} > 1$), then $z_1$ and $z_2$ are not approximately collinear.
	\end{lemma}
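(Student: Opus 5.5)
We argue by contradiction. Suppose $z_1$ and $z_2$ are approximately collinear, so there is $a \in \TT$ with $z_1/a, z_2/a \in \Sigma$. Write $a = e^{it}$ and put $u_j = z_j e^{-it}$. Then $\mathrm{Re}(u_j) \ge 0$ and $|\mathrm{Im}(u_j)| \le 1$ for $j = 1,2$. The idea is that rotating by $e^{-it}$ cannot bring two points lying on opposite sides of the strip $\{|\mathrm{Im}\, z| \le 1\}$ into the same half-strip $\Sigma$ unless both already sit on its boundary lines. The point is that $\Sigma$ has width $2$, while $z_1$ and $z_2$ are separated vertically by at least $2$.

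The key step is to control $\mathrm{Im}(z_1 - z_2)$ in terms of $u_1 - u_2$. Writing $z_j = a u_j$, we have
\[
z_1 - z_2 = a(u_1 - u_2), \qquad |\mathrm{Im}(z_1 - z_2)| \le |u_1 - u_2|.
\]
The bound $|u_1 - u_2|$ alone is not sharp enough, because the real parts of the $u_j$ are unbounded, so I would compute more carefully.

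Set $a = \cos t + i\sin t$ and $u_j = x_j + i y_j$ with $x_j \ge 0$ and $|y_j| \le 1$. Then
\[
\mathrm{Im}(z_j) = x_j \sin t + y_j \cos t .
\]
Hence
\[
\mathrm{Im}(z_1) - \mathrm{Im}(z_2) = (x_1 - x_2)\sin t + (y_1 - y_2)\cos t ,
\]
and by hypothesis this is at least $2$, with strict inequality when one of the given inequalities is strict.

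We split into cases according to the sign of $\sin t$.

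\textbf{Case $\sin t \ge 0$.} Here $\mathrm{Im}(z_2) \le -1$ gives $x_2 \sin t \le -1 - y_2\cos t$. Since $x_2 \sin t \ge 0$, this forces $y_2 \cos t \le -1$. Because $|y_2| \le 1$, we get $|\cos t| = 1$, hence $\sin t = 0$, $y_2\cos t = -1$, and $\mathrm{Im}(z_2) = -1$ exactly. With $\sin t = 0$ we then have $\mathrm{Im}(z_1) = y_1 \cos t \le 1$, so $\mathrm{Im}(z_1) = 1$. Both inequalities are therefore equalities, contradicting strictness.

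\textbf{Case $\sin t \le 0$.} This is symmetric: use $\mathrm{Im}(z_1) \ge 1$ to get $y_1 \cos t \ge 1$, which again forces $\sin t = 0$ and both equalities.

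The only delicate point is the equality bookkeeping, namely making sure that the degenerate case $\sin t = 0$ lands exactly on $\mathrm{Im}(z_1) = 1$ and $\mathrm{Im}(z_2) = -1$. This is elementary.
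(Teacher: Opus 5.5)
Your proof is correct and is essentially the paper's argument: the paper also expands $\mathrm{Im}(z_j)=\mathrm{Re}(z_j/a)\,\mathrm{Im}(a)+\mathrm{Im}(z_j/a)\,\mathrm{Re}(a)$. It uses this to show that $\mathrm{Im}(a)>0$ forces $\mathrm{Im}(z_2)>-1$ and $\mathrm{Im}(a)<0$ forces $\mathrm{Im}(z_1)<1$, hence $a=\pm 1$, and then both inequalities would have to be equalities. Your preliminary estimate on $\mathrm{Im}(z_1)-\mathrm{Im}(z_2)$ is never used and can simply be dropped.
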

	\begin{proof}
		We shall use the following simple fact: if $b \in \TT$ and $\mathrm{Im}(b) > 0$, then $b\Sigma \subset \{z \in \CC: \mathrm{Im}(z) > -1\}$ (and, similarly, if $\mathrm{Im}(b) < 0$, then $b\Sigma \subset \{z \in \CC: \mathrm{Im}(z) < 1\}$). To prove this consider $b \in \TT$ such that $\mathrm{Im}(b) > 0$ and take arbitrary $z \in \Sigma$. Then $\mathrm{Im}(zb) = \mathrm{Re}(z)\mathrm{Im(b)} + \mathrm{Im}(z)\mathrm{Re}(b)$. Clearly, $\mathrm{Re}(z)\mathrm{Im(b)} \ge 0$. Moreover, $|\mathrm{Im}(z)\mathrm{Re}(b)| < 1$, since $|\mathrm{Re}(b)| < 1$ and $|\mathrm{Re}(z)| \le 1$. Thus, $\mathrm{Im}(zb) > -1$. 
		
		Now we return to the proof. Assume that $z_1$ and $z_2$ are approximately collinear. Since $\mathrm{Im}(z_2) \le -1$ the condition $z_2/b \in \Sigma$ implies $\mathrm{Im}(b) \le 0$. Similarly, since $\mathrm{Im}(z_1) \ge 1$, if $z_1/b \in \Sigma$, then $\mathrm{Im}(b) \ge 0$. Thus, if $b \in \TT$ and $z_1/b, z_2/b \in \Sigma$, then $b = \pm 1$. But for $b = \pm 1$ clearly $z_1/b$ and $z_2/b$ cannot simultaneously belong to $\Sigma$, as $\max\{|\mathrm{Im}(z_j)|, j = 1,2\} > 1$.
	\end{proof}
	
	\section{Splitting using discs with diameter on a pair of points from \texorpdfstring{$E$}{E}}\label{sec:Diameters}	
	
	The next step is to eliminate cases, in which we can choose two distinct points $z, w \in E$ such that $F(z,w) \cap \overline{\mathbb D} = \emptyset$ and the third point $v \in E$ is not contained in $\conv(\overline{D} \cup F(z,w))$, where $F(z,w)$ is the closed disc, for which $z$ and $w$ constitute a diameter (that is, they are antipodal boundary points), i.e.
	\[
	F(z,w) = \dfrac{z+ w}{2} + \dfrac{|z - w|}{2} \overline{\mathbb D}.
	\]
	
	To formulate the main result of this section we introduce the following condition on a three-element set $E \subset \CC$.
	\begin{definition}
		We say that a three-element set $E \subset \CC$ is well-separated from zero, if $F(z,w) \cap \overline{\mathbb D} = \emptyset$ for any approximately collinear $z,w \in E$.
	\end{definition}
	
	\begin{proposition}\label{propIfWellSeparatedThenSplittable}
		Let $E$ be a distinguished set that is well-separated from zero. Then $E \cup \{-1,1,\infty\}$ is splittable.
	\end{proposition}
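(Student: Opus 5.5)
The plan is to reduce everything to Lemma~\ref{lemFinishingSplitting}, taking $F_1=\overline{\mathbb D}$ (which contains $\pm1$) and $F_2=F(z,w)$ for a suitable approximately collinear pair $z,w\in E$.

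Since $E$ is well-separated from zero, $F(z,w)\cap\overline{\mathbb D}=\emptyset$ for every approximately collinear pair, so the two discs are disjoint. It then remains to find such a pair for which the third point $v$ satisfies $v\notin\conv(\overline{\mathbb D}\cup F(z,w))$. If $E$ is splittable by a strip, Proposition~\ref{propSplittingIfAsterisk} already gives the conclusion. So we assume it is not, and we are in one of the two cases of Proposition~\ref{propIfSharpNotAsteriskThenCases}. By rotation invariance (Lemma~\ref{lemProjectiveInvarianceSplittable}, applied to multiplication by $a^{-1}$, which fixes the set $\{\pm1,\infty\}$ up to the pair $\pm1/a$ in $\overline{\mathbb D}$) we may assume $a=1$.

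\emph{Case~\ref{casei}: all of $E$ lies in $\Sigma$.} Then every pair is approximately collinear and we are free to choose. Order the points by real part, and take $v$ to be the point with the largest real part and $\{z,w\}$ the remaining two.

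The set $\conv(\overline{\mathbb D}\cup F(z,w))$ is bounded by arcs of the two circles and the two common outer tangent lines. The intention is to show that $v$ lies beyond this region: either outside the half-plane cut off by the supporting line of $F(z,w)$ facing away from the origin, or outside one of the tangent lines. The inputs are:
\begin{itemize}
\item $|\mathrm{Im}|\le1$ on $\Sigma$;
\item $|v-z|,|v-w|\ge 2$;
\item $|\cdot|\ge\sqrt3$ off $\Omega$;
\item the Thales property: $v\in F(z,w)$ iff $\angle zvw\ge\pi/2$.
\end{itemize}
The angle condition forces $v$ to lie essentially between $z$ and $w$ in the direction of the strip, which contradicts $\mathrm{Re}\,v$ being maximal once we use the width bound $2$ of $\Sigma$ together with $|v-z|,|v-w|\ge2$. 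If the choice ``largest real part'' fails in some subconfiguration, I would instead try each of the three choices of $v$ and show that at least one works.

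\emph{Case~\ref{caseii}.} Here $z_1,z_2\in\Sigma$ with $\mathrm{Im}\,z_1=1$, $\mathrm{Im}\,z_3<-1$, and $z_2,z_3$ are approximately collinear, while $z_1,z_3$ are not (Lemma~\ref{lemIfLargeImaginaryOppositeSignsThenNotApprox}). There are two candidate pairs:
\begin{itemize}
\item $(z_1,z_2)$ with $v=z_3$. The hope is that $z_3$ lies below the lower common tangent of $\overline{\mathbb D}$ and $F(z_1,z_2)$.
\item $(z_2,z_3)$ with $v=z_1$. The hope is that $z_1$ lies above the upper tangent of $\overline{\mathbb D}$ and $F(z_2,z_3)$.
\end{itemize}
I would parametrize the lower tangent line through its slope and compare it with the line $\mathrm{Im}=-1$. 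If $F(z_1,z_2)$ dips below $\mathrm{Im}=-1$, this forces $|z_1-z_2|$ large and $z_2$ near the bottom edge of $\Sigma$. In that situation the second choice should work, because $z_2,z_3$ then lie low and $F(z_2,z_3)$ stays under the line $\mathrm{Im}=1$ near $z_1$. Each alternative gives $v\notin\conv(\overline{\mathbb D}\cup F)$, and Lemma~\ref{lemFinishingSplitting} finishes the proof.

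The main obstacle is this convex-hull exclusion, namely making precise which pair to discard and verifying that $v$ falls outside $\conv(\overline{\mathbb D}\cup F(z,w))$. It requires controlling the common tangents of two discs of possibly very different radii. The first thing I would try is to replace the convex hull by the simpler enclosing region $\{\mathrm{Im}\ge -1\}\cap\{\mathrm{Im}\le \max_{F}\mathrm{Im}\}$ (or its rotated analogue), reducing to one-variable inequalities on imaginary parts. Only where that crude bound fails would I fall back on the exact tangent-line computation.
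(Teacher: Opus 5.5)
Your overall frame (strip case first, then the two cases of Proposition~\ref{propIfSharpNotAsteriskThenCases}, then Lemma~\ref{lemFinishingSplitting} with $F_1=\overline{\mathbb D}$) matches the paper. \textbf{The gap is in case~\ref{caseii}.} There you only ever use the diameter discs of the two approximately collinear pairs, and both can fail. Take $a=i$ and $E=\{-1+3i,\ 20i,\ 1.2+10i\}$, i.e.\ $z_1/a=3+i$, $z_2/a=20$, $z_3/a=10-1.2i$. This $E$ has the following properties:
\begin{itemize}
\item it is distinguished;
\item it is not splittable by a strip, since every rotation that moves all three points out of the strip leaves them on the same side;
\item $(z_1,z_2)$ and $(z_2,z_3)$ are approximately collinear, while $(z_1,z_3)$ is not (Lemma~\ref{lemIfLargeImaginaryOppositeSignsThenNotApprox});
\item both diameter discs are far from $\overline{\mathbb D}$.
\end{itemize}
So $E$ is well-separated from zero and satisfies~\ref{caseii} with this $a$. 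Yet $F(z_1/a,z_2/a)=11.5+0.5i+\frac{\sqrt{290}}{2}\overline{\mathbb D}$ contains $z_3/a$. Moreover, $z_1/a\in\conv(\overline{\mathbb D}\cup F(z_2/a,z_3/a))$: with centre $15-0.6i$, radius $R=|10+1.2i|/2>5$ and $t=1/5$ in Lemma~\ref{lemConvexHullDescription}\ref{convexHulli}, one gets $|3+i-t(15-0.6i)|=1.12<tR+1-t$.

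Your heuristic for switching to the second pair is also wrong. Write $z_j/a=x_j+iy_j$. The lowest point of $F(z_1/a,z_2/a)$ drops below $\mathrm{Im}=-1$ as soon as $(x_2-x_1)^2>8(1+y_2)$, wherever $z_2$ sits in the strip (here $y_2=0$). The upper common tangent of $\overline{\mathbb D}$ and $F(z_2,z_3)$ can then pass above $z_1$. In this example the non-collinear pair $(z_1,z_3)$ happens to work, but your plan does not use it. Well-separation also gives no control of $F(z_1,z_3)\cap\overline{\mathbb D}$.

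The missing idea is that here one must abandon the diameter disc. The paper splits~\ref{caseii} into $|z_1|<|z_2|$ and $|z_1|\ge|z_2|$. In the first subcase, which is the example above, Lemma~\ref{lemDeformedCircleInImpersandCase} moves the centre of a disc through $z_1,z_2$ along the perpendicular bisector away from the origin. In the rotated frame this is $F(t)=c+tv+|v|\sqrt{1+t^2}\,\overline{\mathbb D}$ with $c=(z_1+z_2)/2$ and $v=i(z_2-z_1)/2$. On the origin side of the chord, $F(t)\subset F(0)=F(z_1,z_2)$, so disjointness from $\overline{\mathbb D}$ is inherited. For $t=\mathrm{Im}\,v/\mathrm{Re}\,v$ the lowest point of $F(t)$ is $z_2$. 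Hence $\conv(F(t)\cup\overline{\mathbb D})\subset\{\mathrm{Im}\ge-1\}$, which excludes $z_3$; in the example this disc is $20+145i+145\,\overline{\mathbb D}$ in the frame.

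In the other subcase and in case~\ref{casei}, the hull exclusion you leave open is settled by Lemma~\ref{lemBiggestDoesNotBelongToConvexHull}, not by tangent-line computations. That lemma says: if $|z-w|\ge2$, $|v|\ge\max\{|z|,|w|\}$ and $v\notin F(z,w)$, then $v\notin\conv(F(z,w)\cup\overline{\mathbb D})$. So $v$ must be the point of largest modulus. In case~\ref{casei} this is the point of largest real part by Lemma~\ref{lemComparisonInStrip}, with the tie-break of Lemma~\ref{lemInSameStrip}; without it, e.g.\ for $z=\alpha-i$, $w=\beta+i$, $v=\beta-i$, one has $v\in\partial F(z,w)$. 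The fact $v\notin F(z,w)$ then comes from Lemma~\ref{lemInSameStrip}, resp.\ Lemma~\ref{lemLargestNotInTheSameStrip}. Your Thales argument only reaches $v\notin F(z,w)$, which by itself does not keep $v$ out of the hull.
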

	
	In order to prove this we need several facts about discs $F(z,w)$. At first we get rid of the necessity to consider convex hulls of two discs. To do so we need the following lemma (in this section we shall use only the first statement, while the second one will become useful later).
	
	\begin{lemma}\label{lemConvexHullDescription}
		Let $c_1, c_2 \in \CC$ and $r_1, r_2 > 0$ and let $F_j = c_j + r_j \overline{\mathbb D}$. Then the following statements hold.
		\begin{enumerate}[label=(\roman*)]
			\item\label{convexHulli} Let $c(t) = tc_1 + (1-t)c_2$ and $r(t) = tr_1 + (1-t)r_2$. Then $\conv(F_1 \cup F_2) = \bigcup_{t \in [0,1]} \left(c(t) + r(t)\overline{\mathbb D}\right)$.
			\item\label{convexHullii} For the boundary $\partial \conv(F_1 \cup F_2)$ we have the inclusion
			\begin{multline*}
				\partial \conv(F_1 \cup F_2) \subset \\  F_1 \cup F_2 \cup \{z = c(t) + r(t)a: t \in (0,1), |a| = 1, \mathrm{Re}(a(c_1 - c_2)) = r_2 - r_1\}.
			\end{multline*}
		\end{enumerate}
	\end{lemma}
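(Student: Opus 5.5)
For part (i) I would prove the two inclusions separately. First let $K = \bigcup_{t \in [0,1]} (c(t) + r(t)\overline{\mathbb D})$. Each point of $c(t) + r(t)\overline{\mathbb D}$ has the form $c(t) + r(t)u$ with $|u| \le 1$. This splits as
\[
t(c_1 + r_1 u) + (1-t)(c_2 + r_2 u),
\]
which is a convex combination of a point of $F_1$ and a point of $F_2$. Hence $K \subset \conv(F_1 \cup F_2)$.

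For the reverse inclusion it suffices to show that $K$ is convex and contains $F_1 \cup F_2$. The containment is immediate, since $t=1$ gives $F_1$ and $t = 0$ gives $F_2$. For convexity I would use the description $K = \{z : \exists t \in [0,1],\ |z - c(t)| \le r(t)\}$. The function $(z,t) \mapsto |z - c(t)| - r(t)$ is jointly convex on $\CC \times [0,1]$, because $c$ and $r$ are affine in $t$ and the norm is convex. So its sublevel set $\{(z,t): |z-c(t)| \le r(t)\}$ is convex, and $K$ is the projection of this set to the $z$-coordinate, hence convex. Explicitly: if $|z - c(t)| \le r(t)$ and $|z' - c(t')| \le r(t')$, then for $\lambda \in [0,1]$ the point $\lambda z + (1-\lambda) z'$ lies within $r(\lambda t + (1-\lambda)t')$ of $c(\lambda t + (1-\lambda)t')$ by the triangle inequality. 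This gives (i).

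For part (ii), take $z \in \partial \conv(F_1 \cup F_2)$ with $z \notin F_1 \cup F_2$. By (i), and since $K$ is compact (it is the continuous image of a compact set), $z = c(t) + r(t)a$ with $|a| \le 1$ and $t \in (0,1)$. Here $t \ne 0,1$ because $z \notin F_1 \cup F_2$. Consider the function $\phi(s) = |z - c(s)| - r(s)$ on $[0,1]$. It is convex, $\phi(t) \le 0$, and $\phi(0), \phi(1) > 0$.

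If $\phi(t) < 0$, or if $\phi$ were not minimized at $t$, then $\phi(s) < 0$ for some $s$. In that case $z$ lies in the interior of the disc $c(s) + r(s)\overline{\mathbb D}$, hence in the interior of $K$, a contradiction. So $\phi(t) = 0$, which gives $|a| = 1$, and $t \in (0,1)$ is an interior minimum of $\phi$. Moreover $|z - c(s)| > 0$ near $t$ (as $r(t)>0$), so $\phi$ is differentiable there, and $\phi'(t) = 0$.

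Computing the derivative, with $c'(s) = c_1 - c_2$, $r'(s) = r_1 - r_2$ and $z - c(t) = r(t)a$, gives
\[
\phi'(t) = -\frac{\mathrm{Re}\bigl(\overline{(z - c(t))}(c_1 - c_2)\bigr)}{|z-c(t)|} - (r_1 - r_2) = -\mathrm{Re}\bigl(\bar a (c_1 - c_2)\bigr) - r_1 + r_2 .
\]
Setting this to zero yields $\mathrm{Re}(\bar a(c_1-c_2)) = r_2 - r_1$. This matches the stated condition up to the convention $a$ versus $\bar a$. I would check that convention; if the paper's form is literally $\mathrm{Re}(a(c_1-c_2))$, I would replace $a$ by its conjugate appropriately, or note that the intended meaning is the real inner product $\langle a, c_1 - c_2\rangle$.

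The only delicate step is the interior-point argument, namely that $\phi(s) < 0$ forces $z \in \operatorname{int} K$. It follows directly because an open disc around $z$ is contained in $c(s) + r(s)\overline{\mathbb D} \subset K$.
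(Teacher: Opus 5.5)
Your proof is correct and follows essentially the paper's route: for (i) you show that $\bigcup_t (c(t)+r(t)\overline{\mathbb D})$ is convex and lies between $F_1\cup F_2$ and $\conv(F_1\cup F_2)$, and for (ii) you use the vanishing derivative at an interior zero-minimum of $s\mapsto |z-c(s)|-r(s)$; the paper uses the squared version $|z-c(s)|^2-r(s)^2$, which changes nothing. Your condition $\mathrm{Re}(\bar a(c_1-c_2))=r_2-r_1$ is the correct one: the $\mathrm{Re}(a(c_1-c_2))$ in the statement (and in the paper's formula for $f'(t_0)$, which also has a harmless sign slip) is literally false when $c_1-c_2\notin\RR$ (e.g.\ $c_1=0$, $r_1=1$, $c_2=4i$, $r_2=2$), and it does no damage only because the lemma is later applied with $c_1-c_2$ real.
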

	\begin{proof}
		The statement~\ref{convexHulli} is a particular case of a more general fact. That is, let $E$ be a real vector space and let $A \subset E$ be convex. Also consider any $v_1, v_2 \in E$ and $r_1, r_2 > 0$. Then, if we let we have the equality
		\begin{equation}\label{eqConvexHullGeneral}
			\conv((v_1 + r_1 A) \cup (v_2 + r_2A)) = \bigcup\limits_{t \in [0,1]} (tv_1 + (1-t)v_2 + (tr_1 + (1-t)r_2)A).
		\end{equation}
		Clearly,~\eqref{eqConvexHullGeneral} would follow from the fact that the set on the right-hand side of~\eqref{eqConvexHullGeneral} is convex, which is an easy consequence of the convexity of $A$. Thus,~\eqref{eqConvexHullGeneral} holds, and so does~\ref{convexHulli}.
		
		To prove~\ref{convexHullii} let $z \in \partial \conv(F_1 \cup F_2) \setminus (F_1 \cup F_2)$. Clearly, from~\ref{convexHulli} this means that $z \in c(t_0) + r(t_0)\overline{\mathbb D}$ for some $t_0 \in (0,1)$. So, we can write $z = c(t_0) + r(t_0)a$, where $|a| \le 1$. If $|a| < 1$, then $z$ is, obviously, an interior point of $\conv(F_1 \cup F_2)$, so $|a| = 1$. Moreover, by the same argument $|z - c(t)| \ge r(t)$ for all $t \in (0,1)$. Thus, the function $f(t) = |z - c(t)|^2 - r(t)^2$ is nonnegative on the interval $(0,1)$ and $f(t_0)= 0$. Therefore, $f'(t_0) = 0$. By expanding the definition of $f$ we obtain that
		\[
		f'(t_0) = 2(\mathrm{Re}(a(c_1 - c_2)) + (r_1 - r_2))(t_0 r_1 + (1-t_0)r_2).
		\]
		Clearly, $t_0 r_1 + (1-t_0)r_2 > 0$, so $f'(t_0) = 0$ implies that $\mathrm{Re}(a(c_1 - c_2)) = (r_2 - r_1)$.
	\end{proof}
	
	\begin{lemma}\label{lemBiggestDoesNotBelongToConvexHull}
		Assume that $z_1,z_2,z_3 \in \CC$, $|z_1 - z_2| \ge 2$, and $|z_3| \ge \max\{|z_1|,|z_2|\}$. Then $z_3 \notin \conv(F(z_1,z_2) \cup \overline{\mathbb D})$ provided $z_3 \notin F(z_1,z_2)$.
	\end{lemma}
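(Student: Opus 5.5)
The plan is to prove the stronger containment
\[
\conv(F(z_1,z_2)\cup\overline{\mathbb D})\cap\{w\in\CC: |w|\ge R\}\subset F(z_1,z_2),\qquad R=\max\{|z_1|,|z_2|\}.
\]
The lemma follows at once from this, since $|z_3|\ge R$ and $z_3\notin F(z_1,z_2)$.

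First record two facts. Since $|z_1-z_2|\ge 2$, we get $R\ge 1$, so $\overline{\mathbb D}\subset \overline{B}_R=\{|w|\le R\}$. Next write $F=F(z_1,z_2)=c_1+r_1\overline{\mathbb D}$ with $c_1=(z_1+z_2)/2$ and $r_1=|z_1-z_2|/2\ge 1$, and take $c_2=0$, $r_2=1$ in Lemma~\ref{lemConvexHullDescription}. If $F\subset \overline B_R$, the whole hull lies in the convex set $\overline B_R$, and only $|w|=R$ needs care. This is handled by the same boundary argument below, so assume the general situation.

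The main step uses Lemma~\ref{lemConvexHullDescription}~\ref{convexHullii}. Let $K=\conv(F\cup\overline{\mathbb D})$ and suppose $w\in K$ with $|w|\ge R$ but $w\notin F$. Moving outward along the ray from $0$ through $w$, we stay in the compact set $\{|\cdot|\ge R\}$ and eventually leave $K$. Take the last point $w'$ of $K$ on the ray beyond $w$. Then $w'\in\partial K$ and $|w'|\ge R$.

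We may choose $w'\notin F$. If the ray met $F$ beyond $w$, then by convexity of $F$ and of $\overline B_R$ together with $0\in\overline{\mathbb D}\subset K$, we can instead argue with the segment from $w$ to the nearest point of $F$. It is cleanest to work directly with $\partial K\setminus F$.

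By Lemma~\ref{lemConvexHullDescription}~\ref{convexHullii}, the set $\partial K\setminus(F\cup\overline{\mathbb D})$ consists of points on the two common outer tangent segments of the circles $\partial F$ and $\mathbb T$. Each segment joins a tangency point $p\in\mathbb T$ to a tangency point $q\in\partial F$. Since $|p|=1\le R$ and $\overline B_R$ is convex, it suffices to prove that every such tangency point satisfies $|q|<R$, or $|q|\le R$ with equality only when $q\in\{z_1,z_2\}\subset F$. Then every point of the tangent segment other than $q$ has modulus $<R$, and the endpoint $q$ lies in $F$. Hence no point of $\partial K\setminus F$ has modulus $\ge R$. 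Since $\overline{\mathbb D}\cap\{|w|\ge R\}\subset\mathbb T$ with $R\ge1$, the part of $\overline{\mathbb D}$ in question is also covered: such points are in $F$ or have modulus exactly $1=R$, and the latter case is checked directly. From this the claimed containment follows by convexity along rays.

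The obstacle I expect is the tangency estimate $|q|\le R$. I would try to prove it as follows. The tangency point on $\partial F$ is $q=c_1+r_1a$, where $|a|=1$ is the outward normal, and the condition from Lemma~\ref{lemConvexHullDescription}~\ref{convexHullii} reads $\mathrm{Re}(a\,c_1)=1-r_1\le 0$ (with suitable conjugation conventions). The diameter endpoints are $z_{1,2}=c_1\pm r_1 b$ for some $|b|=1$, and
\[
|c_1+r_1a|^2=|c_1|^2+r_1^2+2r_1\mathrm{Re}(a\bar c_1).
\]
The normal condition gives $2r_1\mathrm{Re}(a\bar c_1)=2r_1(1-r_1)\le 0$, so $|q|^2\le |c_1|^2+r_1^2$. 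By the parallelogram law this equals $(|z_1|^2+|z_2|^2)/2\le R^2$, with equality forcing $r_1=1$ and $|z_1|=|z_2|=R$. I would then treat this boundary case separately: there $z_3$ on the circle of radius $R$ and outside $F$ can be separated from $K$ by the tangent line to $\partial B_R$ at $z_3$, unless $z_3\in\{z_1,z_2\}$, which is excluded.
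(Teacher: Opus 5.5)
Your tangency estimate is correct, and it takes a different route from the paper. The normal condition $\mathrm{Re}(a\bar c_1)=1-r_1\le 0$ gives $|q|^2\le |c_1|^2+r_1^2=\tfrac12(|z_1|^2+|z_2|^2)\le R^2$. So for $R>1$ every point of an open tangent segment $\{tq+(1-t)p: 0<t<1\}$, with $p=a$, has modulus $<R$. Together with $\overline{\mathbb D}\subset B_R=\{|w|<R\}$ this gives $\partial K\subset F\cup B_R$. The paper never looks at $\partial K$. It enlarges $\overline{\mathbb D}$ to $r\overline{\mathbb D}$, uses the union-of-discs description of Lemma~\ref{lemConvexHullDescription}~\ref{convexHulli}, and checks membership in $F$ by a short coordinate computation.

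The gap is the passage from $\partial K$ to $K$. Your mechanism is to take the last point $w'$ of $K$ on the ray from $0$ through $w$, and it does not work. By your own boundary claim this $w'$ lies in $F$, so ``we may choose $w'\notin F$'' is unjustified. Since $0$ need not belong to $F$, knowing $w'\in F$ tells you nothing about $w$, which lies between $0$ and $w'$. Walking from $w$ towards the nearest point of $F$ stays inside $K$ and produces no boundary point. So ``convexity along rays'' as written does not close the argument.

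The repair is to shoot rays from a point common to both convex pieces:
\begin{itemize}
\item If $R=1$, then $z_2=-z_1$, $|z_1|=1$, and $K=F=\overline{\mathbb D}$, so there is nothing to prove.
\item If $R>1$, then $c_1\in F\cap B_R$, by strict convexity of the disc, since $z_1\ne z_2$ and $|z_1|,|z_2|\le R$.
\item For $w\in K$, the ray from $c_1$ through $w$ leaves $K$ at some $w'\in\partial K\subset F\cup B_R$ with $w\in[c_1,w']$. Hence $w\in F$ or $w\in B_R$.
\end{itemize}
This is exactly Lemma~\ref{lemConvexCovering} applied to the family $\{F,B_R\}$. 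It yields $K\cap\{|w|\ge R\}\subset F$, including the case $|w|=R$.

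Your separate treatment of the equality case $|q|=R$ is therefore unnecessary: strictness comes from the endpoint $p$ with $|p|=1<R$, and $q\in\partial F$ anyway. The argument you propose for that case is also false. There $F$ has radius $1$ and centre of modulus $\sqrt{R^2-1}$, so it protrudes beyond the circle $|w|=R$. For $z_3$ on that circle close to $z_1$, one has $\max_{w\in F}\mathrm{Re}(w\bar z_3)$ close to $R^2-1+R>R^2$. So the tangent line to $\{|w|=R\}$ at $z_3$ cuts through $F$ and does not separate $z_3$ from $K$.
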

	\begin{proof}
		Assume that $z_3 \notin F(z_1,z_2)$. We shall prove a slightly stronger statement, namely, that $z_3 \notin \conv(F(z_1,z_2) \cup r\overline{\mathbb D})$, where $r = |z_1 - z_2|/2 \ge 1$. Let $c = (z_1 + z_2)/2$, i.e. $c$ is the center of $F(z_1,z_2)$. Without loss of generality we may assume that $c$ is a positive real number (if $c$ zero, then the statement is trivial, otherwise, we can apply a suitable rotation). By Lemma~\ref{lemConvexHullDescription}~\ref{convexHulli} we have the equality
		\[
		\conv(F(z_1,z_2) \cup r\overline{\mathbb D}) = \{z \in \CC: \exists \;t \in [0,1] \text{ such that } |z - tc| \le r\}.
		\]
		Now assume that $z_3 \in \conv(F(z_1,z_2) \cup r\overline{\mathbb D})$, so it is possible to represent $z_3$ in the form $z_3 = (\alpha + tc) + i\beta$, where $t \in [0,1]$, $\alpha, \beta \in \RR$, and $\alpha^2 + \beta^2 \le r^2$. From the inequality $|z_3| \ge \max\{|z_1|,|z_2|\}$ it is easy to conclude that $|z_3|^2 \ge c^2 + r^2$, so
		$(\alpha + tc)^2 + \beta^2 \ge c^2 + r^2$ (in particular, $t\ne 0$). Therefore, $|\alpha + tc| \ge c$. If $\alpha + tc \le -c$, then $\alpha$ is negative, so $(\alpha + tc)^2 + \beta^2 < \alpha^2 + t^2c^2 + \beta^2 \le c^2 + r^2$, which contradicts the inequality above. Thus, $\alpha + tc \ge c$. Then, $|z_3 - c|^2 = (\alpha + tc - c)^2 + \beta^2 \le \alpha^2 + \beta^2 \le r$, so $z_3 \in F(z_1, z_2)$.
	\end{proof}
	
	Now, to proceed further, we need some conditions on a triple of points $z,w,v \in \CC$ that imply $v \notin F(z,w)$. Before stating these conditions we need a couple of elementary facts.
	
	\begin{lemma}\label{lemComparisonInStrip}
		Let $z_1, z_2 \in \Sigma$. Assume that $|z_1 - z_2| \ge 2$. Then $\mathrm{Re}(z_1) \le \mathrm{Re}(z_2)$ if and only if $|z_1| \le |z_2|$.
	\end{lemma}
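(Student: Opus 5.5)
Write $z_j = x_j + i y_j$ with $x_j \ge 0$ and $|y_j| \le 1$, as $z_j \in \Sigma$. The plan is to compare $|z_1|^2 - |z_2|^2$ with $x_1^2 - x_2^2$ and show that the imaginary parts cannot flip the sign. We have
\[
|z_2|^2 - |z_1|^2 = (x_2 - x_1)(x_2 + x_1) + (y_2^2 - y_1^2),
\]
and the imaginary correction always satisfies $|y_2^2 - y_1^2| \le 1$.

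The key step is a lower bound on $|x_1 - x_2|$. Since $|y_1 - y_2| \le 2$ and $|z_1 - z_2| \ge 2$, we get
\[
(x_1 - x_2)^2 \ge 4 - (y_1 - y_2)^2 \ge 0 .
\]
This alone is too weak when $|y_1 - y_2|$ is close to $2$, so the two parts have to be played against each other. Set $d = |x_1 - x_2|$ and $e = |y_1 - y_2| \le 2$, so that $d^2 + e^2 \ge 4$. Also $|y_1 + y_2| \le 2 - e$, because $|y_1 + y_2| + |y_1 - y_2| = 2\max(|y_1|, |y_2|) \le 2$. Hence
\[
|y_2^2 - y_1^2| = e\,|y_1 + y_2| \le e(2 - e).
\]

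Suppose now $x_1 \le x_2$ and $x_1 \ne x_2$, so $d > 0$. Then $(x_2 - x_1)(x_2 + x_1) \ge d \cdot d = d^2$, using $x_1 \ge 0$. It remains to check $d^2 > e(2 - e)$, which follows from
\[
d^2 \ge 4 - e^2 \ge 2e - e^2 ,
\]
with equality throughout only if $e = 2$ and $d = 0$, a case already excluded. So $|z_2|^2 - |z_1|^2 > 0$.

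If instead $x_1 = x_2$, then $|y_1 - y_2| \ge 2$ forces $\{y_1, y_2\} = \{1, -1\}$, so $|z_1| = |z_2|$. The conclusion holds in this case as well.

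This shows that $\mathrm{Re}(z_1) < \mathrm{Re}(z_2)$ implies $|z_1| < |z_2|$, and that $\mathrm{Re}(z_1) = \mathrm{Re}(z_2)$ implies $|z_1| = |z_2|$. The converse follows by applying the same implications with the roles of $z_1$ and $z_2$ swapped, giving the equivalence. The only delicate point is the bound $|y_1 + y_2| \le 2 - |y_1 - y_2|$, which lets the estimate handle the regime $d \approx 0$, $e \approx 2$.
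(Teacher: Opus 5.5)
Your proof is correct and takes essentially the paper's route: both use $x_2^2-x_1^2\ge (x_2-x_1)^2$ for $0\le x_1\le x_2$, together with $|z_1-z_2|^2\ge 4$ and $|y_j|\le 1$, to show that the imaginary parts cannot reverse the sign of $|z_2|^2-|z_1|^2$; your bound $|y_2^2-y_1^2|\le e(2-e)$ does the job of the paper's bound of the imaginary-part terms by $4$. The difference is only in organization: you prove $\mathrm{Re}(z_1)<\mathrm{Re}(z_2)\Rightarrow |z_1|<|z_2|$ and the case $\mathrm{Re}(z_1)=\mathrm{Re}(z_2)$ directly and get the converse by trichotomy and symmetry, whereas the paper argues by contradiction and gets the converse from the equality case $\mathrm{Im}(z_2)=-\mathrm{Im}(z_1)=\pm 1$.
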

	\begin{proof}
		Assume that $\mathrm{Re}(z_1) \le \mathrm{Re}(z_2)$ and $|z_1| > |z_2|$. Write $z_1 = \alpha + i\beta$ and $z_2 = \gamma + i\delta$. Then $\alpha^2 + \beta^2 > \gamma^2 + \delta^2$, so $\gamma^2 - \alpha^2 < \beta^2 - \delta^2$. Thus, 
		\begin{multline*}
			|z_2 - z_1|^2 = (\gamma - \alpha)^2 + (\delta - \beta)^2 \le (\gamma - \alpha)(\gamma + \alpha) + (\delta - \beta)^2 < \\   \beta^2 - \delta^2 +  \delta^2+ \beta^2 - 2\beta \delta = 2\beta(\delta - \beta) .
		\end{multline*}
		Since $|\beta|, |\delta| \le 1$, it follows that $2\beta(\delta - \beta) \le 4$, so $|z_2 - z_1|^2 < 4$. We have arrived at a contradiction. Thus, the inequality $\mathrm{Re}(z_1) \le \mathrm{Re}(z_2)$ implies $|z_1| \le |z_2|$.
		
		Similar calculation shows the converse (to reuse the calculations of the previous paragraph we exchange the roles of $z_1$ and $z_2$). That is, assume that $\mathrm{Re}(z_1) < \mathrm{Re}(z_2)$ and $|z_1| \ge |z_2|$. Then as above we can prove that $|z_2 - z_1|^2 \le 2\beta(\delta - \beta)$ (note that the inequality now is not strict). Since $|z_2 - z_1|^2 \ge 4$ we can conclude that $\delta = -\beta = \pm 1$. In particular, $\beta^2 = \delta^2$, so inequalities $0 \le \alpha < \gamma$ imply
		\[
		|z_1|^2 = \alpha^2 + \beta^2 < \gamma^2 + \delta^2 = |z_2|^2.
		\]
		This contradiction shows that the inequality $|z_1| \ge |z_2|$ implies $\mathrm{Re}(z_1) \ge \mathrm{Re}(z_2)$.
	\end{proof}
	
	\begin{lemma}\label{lemEqualImagPartsInStrip}
		Assume that $z_1,z_2 \in \CC$ are approximately collinear and $|z_1|, |z_2| > 1$. Then there exists $b \in \TT$ such that $z_1/b, z_2/b \in \Sigma$, and $\mathrm{Im}(z_1/b) = -\mathrm{Im}(z_2/b)$.
	\end{lemma}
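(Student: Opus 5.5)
The plan is a continuity (intermediate value) argument on the arc of admissible rotations. Since $z_1$ and $z_2$ are approximately collinear, the set
\[
J = \{t \in \RR : z_1e^{-it} \in \Sigma \text{ and } z_2e^{-it} \in \Sigma\}
\]
is nonempty. By Lemma~\ref{lemArcsin}~\ref{arcsinii}, for each $j$ the set $\{t: z_je^{-it} \in \Sigma\}$ is a union of disjoint closed intervals of length $2\arcsin(1/|z_j|) < \pi$ (we use $|z_j| > 1$ here), repeated with period $2\pi$. So we can fix one component $[t_0 - \alpha, t_0 + \alpha]$ for $z_1$ that meets such an interval for $z_2$. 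Their intersection is then a nonempty closed interval $[s_1, s_2]$, and on it both $z_1e^{-it}$ and $z_2e^{-it}$ lie in $\Sigma$.

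Consider the continuous function
\[
g(t) = \mathrm{Im}(z_1e^{-it}) + \mathrm{Im}(z_2e^{-it}).
\]
We want a zero of $g$ on $[s_1, s_2]$; then $b = e^{it}$ at that zero has the required property. The key observation concerns the endpoints. At an endpoint of $[s_1,s_2]$, one of the points, say $z_j$, is at an endpoint of its own $\Sigma$-interval. At such a point $\mathrm{Im}(z_je^{-it}) = \pm 1$: on the $\Sigma$-interval of $z_j$ the real part is positive (since $\cos(t - t_0) > 0$ there), so the only way to leave $\Sigma$ is through $|\mathrm{Im}| = 1$. The sign is determined by the side: $\mathrm{Im}(z_je^{-it}) = |z_j|\sin(t_j - t)$, where $z_je^{-it_j} > 0$, and this is decreasing in $t$ on the interval. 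Hence it equals $+1$ at the left end and $-1$ at the right end.

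This gives the endpoint signs. At $s_1$, some $z_j$ has imaginary part $+1$ and the other has imaginary part in $[-1,1]$, so $g(s_1) \ge 0$. Symmetrically, at $s_2$ some $z_k$ has imaginary part $-1$, so $g(s_2) \le 0$. By the intermediate value theorem there is $t \in [s_1, s_2]$ with $g(t) = 0$. Taking $b = e^{it}$ gives $z_1/b, z_2/b \in \Sigma$ and $\mathrm{Im}(z_1/b) = -\mathrm{Im}(z_2/b)$.

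The main point to check carefully is the endpoint analysis: that $s_1$ really is the left endpoint of the $\Sigma$-interval of some $z_j$, and that the imaginary part there is $+1$ rather than $-1$. This follows because $[s_1,s_2]$ is the intersection of two intervals, so each of its endpoints is an endpoint of one of them on the same side. It also relies on the monotonicity of $\mathrm{Im}(z_je^{-it})$ on each $\Sigma$-interval, which holds because $|t - t_j| \le \arcsin(1/|z_j|) < \pi/2$ there.
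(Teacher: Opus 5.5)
Your proof is correct, and it takes a different route from the paper.

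The paper's route is variational. It minimizes $f(a)=\max\{|\mathrm{Im}(z_1/a)|,|\mathrm{Im}(z_2/a)|\}$ over the compact set $C=\{a\in\TT: z_1/a,\, z_2/a\in\Sigma\}$. It then argues, by small rotations, that a minimizer $b$ cannot have $|\mathrm{Im}(z_1/b)|\neq|\mathrm{Im}(z_2/b)|$, nor equal nonzero imaginary parts of the same sign. This needs compactness plus a local perturbation case analysis, which relies on $t\mapsto \mathrm{Im}(e^{-it}z)$ being monotone where $\mathrm{Re}(e^{-it}z)>0$.

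Your route is an intermediate value argument. You use the explicit interval description from Lemma~\ref{lemArcsin}~\ref{arcsinii}: the two $\Sigma$-intervals through a common admissible parameter intersect in a closed interval $[s_1,s_2]$. Its endpoints are $\max_j(t_j-\alpha_j)$ and $\min_j(t_j+\alpha_j)$, where $\mathrm{Im}(z_je^{-it})=|z_j|\sin(\pm\alpha_j)=\pm 1$ exactly. Hence $g(s_1)\ge 0\ge g(s_2)$, the intermediate value theorem finishes, and the degenerate case $s_1=s_2$ is covered automatically. In fact your monotonicity remark is not needed, since the endpoint values are computed directly.

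Comparing the two: your argument is shorter, explicit and constructive, but it uses that the intersection is connected. The paper's compactness argument sidesteps connectedness. In exchange, the paper produces a $b$ with the extra (later unused) property that it minimizes $\max_j|\mathrm{Im}(z_j/b)|$ over all admissible rotations.
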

	\begin{proof}
		Let $C = \{a \in \TT: z_1/a \in \Sigma, z_2/a \in \Sigma\}$. Clearly, $C$ is a closed nonempty subset of the unit circle. Define $f:C \to \RR$ by the rule 
		\[
		f(a) = \max\{|\mathrm{Im}(z_1/a)|, |\mathrm{Im}(z_2/a)|\}.
		\]
		Since $f$ is continuous and $C$ is compact, there exists $b \in C$ such that $f(b) = \inf \{f(a): a \in C\}$. We claim that $\mathrm{Im}(z_1/b) = -\mathrm{Im}(z_2/b)$. At first assume that $|\mathrm{Im}(z_1/b)| > |\mathrm{Im}(z_2/b)|$. Then it is possible to find arbitrarily small $t \in \RR$ such that $|\mathrm{Im}(e^{-it}z_1/b)|<|\mathrm{Im}(z_1/b)|$. If $t$ is chosen small enough, then still $ e^{-it}z_1/b$ and $e^{-it}z_2/b$ belong to $\Sigma$ (note that $\mathrm{Re}(z_j/b) > 0$, since $|z_j| > 1$), and $|\mathrm{Im}(e^{-it}z_1/b)|>|\mathrm{Im}(e^{-it}z_1/b)|$. Thus, $be^{it} \in C$ and $f(be^{it}) < f(b)$, contradicting the choice of $b$. Therefore, $|\mathrm{Im}(z_1/b)| = |\mathrm{Im}(z_2/b)|$. However, if $\mathrm{Im}(z_1/b) = \mathrm{Im}(z_2/b) \ne 0$, then by a similar argument as above one shows that $b$ is not optimal (by multiplying $b$ with $e^{it}$ for small $t$; it suffices to note that the function $t \mapsto \mathrm{Im}(e^{-it}z)$ monotonically decreases, if $t$ is restricted to the set, where $\mathrm{Re}(e^{-it}z) > 0$). So, $\mathrm{Im}(z_1/b) = -\mathrm{Im}(z_2/b)$.
	\end{proof}
	
	\begin{lemma}\label{lemCriterionForDiscOOnDiameter}
		Let $z_1,z_2,z_3 \in \CC$. Then $z_3 \in F(z_1,z_2)$ if and only if \[\mathrm{Re}\left((z_3 - z_1)(\overline{z_3} - \overline{z_2})\right) > 0.\]
	\end{lemma}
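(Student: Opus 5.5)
The plan is to reduce everything to a single algebraic identity that expresses the position of $z_3$ relative to $F(z_1,z_2)$ through the sign of $\mathrm{Re}\left((z_3 - z_1)(\overline{z_3} - \overline{z_2})\right)$. Put $c = (z_1+z_2)/2$ and $r = |z_1 - z_2|/2$, so that by definition $F(z_1,z_2) = c + r\overline{\mathbb D}$. Write $u = z_3 - c$ and $h = (z_1 - z_2)/2$. Then $z_3 - z_1 = u - h$, $z_3 - z_2 = u + h$, and $|h| = r$.

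The key step is the expansion
\[
(z_3 - z_1)(\overline{z_3} - \overline{z_2}) = (u-h)(\overline{u}+\overline{h}) = |u|^2 - |h|^2 + (u\overline{h} - h\overline{u}).
\]
The last bracket is purely imaginary. Taking real parts therefore gives
\[
\mathrm{Re}\left((z_3 - z_1)(\overline{z_3} - \overline{z_2})\right) = |z_3 - c|^2 - r^2 .
\]
This is simply the Thales criterion in complex form: the real part is the dot product of the vectors $z_3 - z_1$ and $z_3 - z_2$, so its sign records whether the angle at $z_3$ subtended by the diameter is obtuse, right, or acute.

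From the identity, the sign of the expression in the statement decides the position of $z_3$ relative to the disc $F(z_1,z_2)$. Equality with $0$ corresponds exactly to $z_3 \in \partial F(z_1,z_2)$. The strict sign condition $\mathrm{Re}(\cdot) > 0$ corresponds exactly to $|z_3 - c| > r$, that is, to $z_3$ lying strictly on the far side of the boundary circle. That is the form in which the criterion will be applied below, namely to certify the condition $z_3 \notin F(z_1,z_2)$ needed in Lemma~\ref{lemBiggestDoesNotBelongToConvexHull}. I will phrase the final step so that both directions of the equivalence follow at once from the identity, since the right-hand side $|z_3-c|^2 - r^2$ has a fixed geometric meaning.

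There is no real obstacle: the only point needing care is the bookkeeping of signs and conjugates in the expansion, specifically checking that the cross terms $u\overline{h} - h\overline{u}$ cancel in the real part. One must also keep straight which side of the circle the strict inequality picks out, so that the lemma is invoked with the correct orientation in the subsequent arguments.
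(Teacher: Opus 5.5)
Your identity
\[
\mathrm{Re}\left((z_3 - z_1)(\overline{z_3} - \overline{z_2})\right) = |z_3 - c|^2 - r^2
\]
is correct. But it shows that the expression is $>0$ exactly when $z_3 \notin F(z_1,z_2)$. So the statement as printed, with $z_3 \in F(z_1,z_2)$, is false, and no argument can establish it. Two counterexamples: at $z_3 = c$ the expression is $-r^2<0$, and at $z_3 = z_1$ it is $0$, yet in both cases $z_3 \in F(z_1,z_2)$.

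The intended lemma is
\[
z_3 \notin F(z_1,z_2) \iff \mathrm{Re}\left((z_3 - z_1)(\overline{z_3} - \overline{z_2})\right) > 0,
\]
equivalently $z_3 \in F(z_1,z_2) \iff \mathrm{Re}(\cdots) \le 0$. This is what the paper's proof actually shows: it ends with $0 \notin F(1,z_2) \iff |z_2+1|>|z_2-1| \iff \mathrm{Re}(z_2)>0$. Both directions of this corrected form are used later. Lemma~\ref{lemInSameStrip} needs the implication from $\mathrm{Re}(\cdots)>0$ to $z_3 \notin F(z_1,z_2)$. Lemma~\ref{lemLargestNotInTheSameStrip} needs the implication from $w \in F(z_1,z_2)$ to $\mathrm{Re}(\cdots) \le 0$.

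Your last two paragraphs circle this point (``which side of the circle the strict inequality picks out''). You then promise that ``both directions of the equivalence follow at once'' without saying which equivalence you mean. For the equivalence actually written, they do not follow. You should state explicitly that the lemma must be read with $\notin$, give the one-line counterexample to the printed version, and then conclude from your identity.

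With that correction your proof is complete, and it differs mildly from the paper's. The paper notes that both conditions are invariant under affine maps $z \mapsto az+b$, since the expression is multiplied by $|a|^2>0$. After setting aside the case $z_3=z_1$, it normalizes to $z_3=0$, $z_1=1$. It then compares the distance from $0$ to the centre $(1+z_2)/2$ with the radius $|1-z_2|/2$.

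Your expansion about the centre makes the same comparison in one step for arbitrary $z_1,z_2,z_3$, with no normalization and no exceptional case. It also identifies the expression as the power of $z_3$ with respect to the circle $\partial F(z_1,z_2)$, which makes the boundary case (value $0$) transparent.
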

	\begin{proof}
		Indeed, both of the statements are invariant with respect to affine transformations, so we can assume $z_3 = 0$ and $z_1 = 1$ (if $z_3 = z_1$, then the statement is trivial). Clearly, 
		\[
		0 \notin F(1, z_2) \; \Leftrightarrow \; |z_2 + 1| > |z_2 - 1| \; \Leftrightarrow \mathrm{Re}(z_2) > 0.
		\]
	\end{proof}

	Finally, we are ready to give sufficient conditions on a triple of points $z_1,z_2,z_3 \in \CC$ for them to satisfy $z_3 \notin F(z_1, z_2)$.
	
	\begin{lemma}\label{lemInSameStrip}
		Let $z_1, z_2, z_3 \in \Sigma$ satisfy the following conditions.
		\begin{enumerate}[label=(\roman*)]
			\item $|z_3 - z_1|, |z_3 - z_2| \ge 2$.
			\item $\mathrm{Re}(z_1) \le \mathrm{Re}(z_2) \le \mathrm{Re}(z_3)$ and, if $\mathrm{Re}(z_2) = \mathrm{Re}(z_3)$, then $\mathrm{Im}(z_3) \ne \mathrm{Im}(z_1)$.
		\end{enumerate}
		Then $z_3 \notin F(z_1, z_2)$.
	\end{lemma}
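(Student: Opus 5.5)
The plan is to turn the statement into an explicit inequality using Lemma~\ref{lemCriterionForDiscOOnDiameter} and then check that inequality by splitting into cases on the imaginary parts. One caveat first. $z_3$ lies in the closed disc with diameter $[z_1,z_2]$ exactly when the angle at $z_3$ is at least a right angle, i.e. when $\mathrm{Re}((z_3-z_1)(\overline{z_3}-\overline{z_2}))\le 0$. The condition as printed in Lemma~\ref{lemCriterionForDiscOOnDiameter} appears to have the inequality reversed: its own proof concludes $0\notin F(1,z_2)\Leftrightarrow \mathrm{Re}(z_2)>0$, which is the statement that \emph{non}-membership corresponds to positivity. I will therefore work from that characterization, namely
\[
 z_3\notin F(z_1,z_2) \iff Q:=\mathrm{Re}\big((z_3-z_1)(\overline{z_3}-\overline{z_2})\big)>0 .
\]
Writing $z_j=x_j+iy_j$, this quantity is
\[
Q=(x_3-x_1)(x_3-x_2)+(y_3-y_1)(y_3-y_2),
\]
and it suffices to show $Q>0$. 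By hypothesis (ii) we have $x_3-x_1\ge x_3-x_2\ge 0$, so the first term is nonnegative. Since all points lie in $\Sigma$, every $y_j$ lies in $[-1,1]$.

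\emph{Case A: $(y_3-y_1)(y_3-y_2)\ge 0$.} Here $Q\ge 0$, and $Q$ vanishes only if both terms vanish.
\begin{enumerate}[label=(\alph*)]
\item If $x_3>x_2$, then $x_3-x_1\ge x_3-x_2>0$, so the first term is strictly positive.
\item If $x_3=x_2$, then hypothesis (i) forces $|y_3-y_2|\ge 2$. Hence $y_3=-y_2=\pm1$.
\item Then $y_3-y_1$ has the same sign as $y_3-y_2$ (or is zero), because $y_1\in[-1,1]$ and $y_3$ is an endpoint of that interval.
\item The extra clause in (ii) gives $y_3\ne y_1$. So the second term is strictly positive, and $Q>0$.
\end{enumerate}

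\emph{Case B: $y_3$ lies strictly between $y_1$ and $y_2$.} Put $a=|y_3-y_1|>0$ and $b=|y_3-y_2|>0$. Then the second term is $-ab$, and $a+b=|y_1-y_2|\le 2$.
\begin{enumerate}[label=(\alph*)]
\item From hypothesis (i), $(x_3-x_2)^2\ge 4-b^2$ and $(x_3-x_1)^2\ge 4-a^2$. Together with the ordering of real parts this gives
\[
(x_3-x_1)(x_3-x_2)\ge \sqrt{(4-a^2)(4-b^2)} .
\]
\item It remains to check $(4-a^2)(4-b^2)>a^2b^2$. This is equivalent to $16>4(a^2+b^2)$, i.e. $a^2+b^2<4$.
\item Since $a,b>0$ and $a+b\le 2$, we get $a^2+b^2<(a+b)^2\le 4$, which gives $Q>0$.
\end{enumerate}

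The degenerate configurations, such as $a=2,b=0$, have one factor of the second term equal to zero, so they fall under Case A and were already handled there.

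The main obstacle is the bookkeeping of the boundary cases in which strictness could fail: $x_3=x_2$ combined with $|y_3-y_2|=2$. This is exactly where the extra hypothesis $\mathrm{Im}(z_3)\ne\mathrm{Im}(z_1)$ has to enter. Apart from that, and the sign issue in the criterion, the argument is a direct computation.
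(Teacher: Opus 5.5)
Your proof is correct and follows the paper's argument. You reduce via the sign-corrected criterion of Lemma~\ref{lemCriterionForDiscOOnDiameter} to positivity of $(x_3-x_1)(x_3-x_2)+(y_3-y_1)(y_3-y_2)$, split on the sign of the imaginary-part product, and in the negative case use $(4-a^2)(4-b^2)>a^2b^2$ for $a,b>0$, $a+b\le 2$; your reading of the criterion is the intended one, and your treatment of the boundary case $x_3=x_2$ via $y_3=-y_2=\pm1$ is just a more explicit version of the paper's short contradiction argument.
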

	\begin{proof}
		
		Let $z_j = \alpha_j + i\beta_j$, where $\alpha_j, \beta_j \in \RR$ for $j \in \{1,2,3\}$. Due to Lemma~\ref{lemCriterionForDiscOOnDiameter} we need to prove that 
		\[
		\mathrm{Re}\left((z_3 - z_1)(\overline{z_3} - \overline{z_2})\right)  = (\alpha_3 - \alpha_1)(\alpha_3 - \alpha_2) + (\beta_3 - \beta_1)(\beta_3 - \beta_2) > 0.
		\]
		By assumptions $\alpha_3 \ge \max\{\alpha_1, \alpha_2\}$. Therefore, if $(\beta_3 - \beta_1)(\beta_3 - \beta_2) > 0$, then the statement, obviously, holds. Moreover, if $(\beta_3 - \beta_1)(\beta_3 - \beta_2) = 0$, then the statement is true, unless $(\alpha_3 - \alpha_1)(\alpha_3 - \alpha_2) = 0$. But this may happen only if $\alpha_3 = \alpha_2$ and, by assumptions, in this case $\beta_3 \ne \beta_1$. Moreover, $z_3 \ne z_2$, so $\beta_3 \ne \beta_2$. Therefore, $(\beta_3 - \beta_1)(\beta_3 - \beta_2) \ne 0$, contradicting the assumption. 
		
		It remains to consider the case, when $(\beta_3 - \beta_1)(\beta_3 - \beta_2) < 0$. From the inequalities $|z_3 - z_1| \ge 2$ and $|z_3 - z_2| \ge 2$ we conclude that
		\[
		(\alpha_3 - \alpha_1)^2 + (\beta_3 - \beta_1)^2 \ge 4,\;\;(\alpha_3 - \alpha_2)^2 + (\beta_3 - \beta_2)^2 \ge 4.
		\]
		From these inequalities it follows that
		\begin{equation*}
			(\alpha_3 - \alpha_1)^2(\alpha_3 - \alpha_2)^2 \ge (4 - (\beta_3 - \beta_1)^2)(4 - (\beta_3 - \beta_2)^2).
		\end{equation*}
		Let $s = |\beta_3 - \beta_1|$ and $t = |\beta_2 - \beta_1|$. From the inequality $(\beta_3 - \beta_1)(\beta_3 - \beta_2) < 0$ it follows that $s, t > 0$ and $s + t = |\beta_1 - \beta_2| \le 2$. Elementary calculations show that for any $s,t > 0$ such that $s + t \le 2$ we have the inequality $(4 - t^2)(4 - s^2) > t^2s^2$. Therefore,
		\[
		(\alpha_3 - \alpha_1)^2(\alpha_3 - \alpha_2)^2 \ge (4 - t^2)(4 - s^2) > t^2s^2 = (\beta_3 - \beta_1)^2(\beta_3 - \beta_2)^2.
		\]
		Since $(\alpha_3 - \alpha_1)(\alpha_3 - \alpha_2) \ge 0$, this implies the required inequality.
	\end{proof}
	
	\begin{lemma}\label{lemLargestNotInTheSameStrip}
		Let $z_1, z_2, z_3 \in \CC$ satisfy the following conditions.
		\begin{enumerate}[label=(\roman*)]
			\item\label{LargestNoti}  $|z_3| \ge \max\{|z_1|, |z_2|\}$.
			\item\label{LargestNotii} $z_1$ and $z_2$ are approximately collinear, and $\min\{|z_1|, |z_2|\} > 1$.
			\item\label{LargestNotiii} There is no $a \in \TT$ such that $|\mathrm{Im}(z_j/a)| \le 1$ for all $j \in \{1,2,3\}$.
		\end{enumerate}
		Then $z_3 \notin F(z_1, z_2)$.
	\end{lemma}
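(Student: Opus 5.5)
The plan is to argue by contradiction: assume $z_3 \in F(z_1,z_2)$ and produce a rotation $a \in \TT$ for which all three points lie in the strip $|\mathrm{Im}(z/a)| \le 1$, contradicting~\ref{LargestNotiii}. By~\ref{LargestNotii} and Lemma~\ref{lemEqualImagPartsInStrip}, there is $b \in \TT$ with $z_1/b, z_2/b \in \Sigma$ and $\mathrm{Im}(z_1/b) = -\mathrm{Im}(z_2/b)$. I will show that $a = b$ works. All hypotheses and the conclusion are rotation invariant, so I replace $z_j$ by $z_j/b$ and write
\[
z_1 = x_1 + ih,\quad z_2 = x_2 - ih,\quad |h| \le 1,
\]
with $x_1, x_2 \in \RR$. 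The only remaining task is to show that $|\mathrm{Im}(z_3)| \le 1$.

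The key observation is that the center $c = (z_1+z_2)/2 = (x_1+x_2)/2$ of $F(z_1,z_2)$ is real. Let $r = |z_1-z_2|/2$ be the radius. For any $w \in \CC$,
\[
|w|^2 = |w - c|^2 + 2c\,\mathrm{Re}(w) - c^2 .
\]
Since $z_1, z_2$ lie on $\partial F(z_1,z_2)$, this gives $|z_j|^2 = r^2 + 2c x_j - c^2$. For $w = z_3 \in F(z_1,z_2)$ it gives $|z_3|^2 \le r^2 + 2c\,\mathrm{Re}(z_3) - c^2$. Combining these with~\ref{LargestNoti} yields $c\,\mathrm{Re}(z_3) \ge c \max\{x_1,x_2\}$. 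I need $c > 0$, and this holds because $\mathrm{Re}(z_j) \ge 0$ in $\Sigma$ and $|z_j| > 1$ with $|h| \le 1$ force $x_j > 0$. Hence $\mathrm{Re}(z_3) \ge x_{\max} := \max\{x_1,x_2\}$.

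So $z_3$ lies in the part of the disc $F(z_1,z_2)$ to the right of the vertical line $\mathrm{Re}(z) = x_{\max}$. That vertical line passes through one of $z_1,z_2$, which lies on the circle. By the symmetry of the disc about the real axis (its center is real), the line meets the disc in the chord from $x_{\max} - i|h|$ to $x_{\max} + i|h|$. Moreover $x_{\max} \ge c$, because $c$ is the average of $x_1$ and $x_2$. Therefore the chord is at or to the right of the center, and the vertical chords only shrink as $\mathrm{Re}$ increases. It follows that the cap $\{w \in F(z_1,z_2): \mathrm{Re}(w) \ge x_{\max}\}$ lies in $\{|\mathrm{Im}(w)| \le |h|\}$. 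Explicitly, for such $w$,
\[
\mathrm{Im}(w)^2 \le r^2 - (\mathrm{Re}(w) - c)^2 \le r^2 - (x_{\max}-c)^2 = h^2 .
\]

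Thus $|\mathrm{Im}(z_3)| \le |h| \le 1$, and also $|\mathrm{Im}(z_1)| = |\mathrm{Im}(z_2)| = |h| \le 1$, contradicting~\ref{LargestNotiii} with $a = b$. The only delicate step is the choice of rotation making the center real. Lemma~\ref{lemEqualImagPartsInStrip} supplies exactly this, and the rest is the identity relating $|w|^2$ to $\mathrm{Re}(w)$ for a disc centered on the positive real axis.
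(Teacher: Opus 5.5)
Your proof is correct and follows the paper's route: you use the same normalization via Lemma~\ref{lemEqualImagPartsInStrip}, and then prove the contrapositive of the paper's key claim. The paper shows that any $w \in F(z_1,z_2)$ with $|\mathrm{Im}(w)|>1$ has $|w| < \max\{|z_1|,|z_2|\}$, via the diameter inequality $(a-\alpha)(a-\beta)+b^2-\delta^2 \le 0$ from Lemma~\ref{lemCriterionForDiscOOnDiameter}. You instead show, via the identity $|w|^2 = |w-c|^2 + 2c\,\mathrm{Re}(w) - c^2$, that any point of the disc with $|w| \ge \max\{|z_1|,|z_2|\}$ lies in the cap $\mathrm{Re}(w) \ge \max\{x_1,x_2\}$ and hence satisfies $|\mathrm{Im}(w)| \le |h| \le 1$.
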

	\begin{proof}
		Clearly, by~\ref{LargestNotii} Lemma~\ref{lemEqualImagPartsInStrip} is applicable to $z_1$ and $z_2$. Therefore, we can replace $z_j$ with $z_j/b$, where $b$ is given by Lemma~\ref{lemEqualImagPartsInStrip} applied to $z_1$ and $z_2$. Thus, from now on we assume that $z_1,z_2 \in \Sigma$ and $\mathrm{Im}(z_1) = -\mathrm{Im}(z_2)$. The statement~\ref{LargestNotiii} implies that $|\mathrm{Im}(z_3)| > 1$. Clearly, due to~\ref{LargestNoti}, it suffices to prove that if $w \in F(z_1,z_2)$ and $|\mathrm{Im}(w)| > 1$, then $|w| < \max\{|z_1|, |z_2|\}$. Without loss of generality we may assume that $\mathrm{Re}(z_1) \le \mathrm{Re}(z_2)$, so we may write $z_1 = \alpha+ i\delta$ and $z_2 = \beta - i\delta$, where $0 \le \alpha \le \beta$ and $|\delta| < 1$. Now consider arbitrary $w = a + ib \in F(z_1, z_2)$ such that $|b| > 1$. As $w \in F(z_1,z_2)$, from Lemma~\ref{lemCriterionForDiscOOnDiameter} it follows that 
		\[
		(a - \alpha)(a - \beta) + b^2 - \delta^2 \le 0.
		\]
		Since $|b| > 1$ and $|\delta| \le 1$ it follows that $(a - \alpha)(a - \beta) < 0$. Thus, $\alpha < a < \beta$ (in particular, $\beta > 0$). Therefore, we can write
		\[
		|w|^2 = a^2 + b^2 \le \delta^2 + \alpha a + \beta a - \alpha \beta < \delta^2 + \alpha \beta + \beta^2 - \alpha \beta = \delta^2 + \beta^2 = |z_2|^2.
		\]
	\end{proof}
	
	Now we can easily prove the following lemma.
	
	\begin{lemma}\label{lemIfSharpAndImpersandPartialThenSplittable}
		Assume that a three-point set $E \subset \CC$ is distinguished and well-separated from zero. Then the following statements hold.
		\begin{enumerate}[label=(\roman*)]
			\item\label{partialcasesi} If $E$ satisfies condition~\ref{casei} of Proposition~\ref{propIfSharpNotAsteriskThenCases}, then $\{-1,1,\infty\} \cup E$ is splittable.
			\item\label{partialcasesii} Assume that $E$ satisfies condition~\ref{caseii} of Proposition~\ref{propIfSharpNotAsteriskThenCases} and let $E = \{z_1, z_2,z_3\}$ be the corresponding enumeration. If $|z_1| \ge |z_2|$, then $\{-1,1,\infty\} \cup E$ is splittable.
		\end{enumerate}
	\end{lemma}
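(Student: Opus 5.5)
In both cases the plan is the same. Let $z_3$ be a point of largest modulus among the relevant points and $z_1,z_2$ the other two, chosen approximately collinear. By well-separation, $F(z_1,z_2)\cap\overline{\mathbb D}=\emptyset$, and both are closed bounded discs. Since $E$ is distinguished, $|z_1-z_2|\ge 2$. So by Lemma~\ref{lemBiggestDoesNotBelongToConvexHull} it suffices to check $z_3\notin F(z_1,z_2)$. Then Lemma~\ref{lemFinishingSplitting}, applied with $F_1=F(z_1,z_2)$, $F_2=\overline{\mathbb D}\ni\pm1$ and $e_5=z_3$, gives splittability of $\{-1,1,\infty\}\cup E$.

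\emph{Case~\ref{partialcasesi}.} After rotating by $a$ we may assume $E\subset\Sigma$, and any two points of $E$ are approximately collinear. Order $E=\{z_1,z_2,z_3\}$ so that $\mathrm{Re}(z_1)\le\mathrm{Re}(z_2)\le\mathrm{Re}(z_3)$. By Lemma~\ref{lemComparisonInStrip} (pairwise distances are at least $2$), $z_3$ then has the largest modulus, so Lemma~\ref{lemBiggestDoesNotBelongToConvexHull} applies. To get $z_3\notin F(z_1,z_2)$ I use Lemma~\ref{lemInSameStrip}. Its hypothesis~(i) is the distinguished condition. For hypothesis~(ii), the only delicate point is when $\mathrm{Re}(z_2)=\mathrm{Re}(z_3)$ and $\mathrm{Im}(z_3)=\mathrm{Im}(z_1)$. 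Then $|z_2-z_3|\ge2$ with both in $\Sigma$ forces $\{\mathrm{Im} z_2,\mathrm{Im} z_3\}=\{1,-1\}$. If $\mathrm{Re}(z_1)<\mathrm{Re}(z_2)$, I swap the labels of $z_2$ and $z_3$. They have equal real parts, hence equal moduli, so the maximality of $|z_3|$ is kept, and now $\mathrm{Im}(z_3)\ne\mathrm{Im}(z_1)$. If instead all three real parts are equal, three points on one vertical segment of length at most $2$ cannot be pairwise $2$ apart, a contradiction. So Lemma~\ref{lemInSameStrip} applies.

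\emph{Case~\ref{partialcasesii}.} Here $z_1/a,z_2/a\in\Sigma$, so $z_1,z_2$ are approximately collinear, with $|z_1|\ge|z_2|$. The ordering of moduli now matters. If $|z_3|\ge|z_1|$, take $z_3$ as the largest point with the pair $\{z_1,z_2\}$. I then verify $z_3\notin F(z_1,z_2)$ by Lemma~\ref{lemLargestNotInTheSameStrip}. Conditions~\ref{LargestNoti} and~\ref{LargestNotii} hold since $|z_j|\ge\sqrt3>1$. For~\ref{LargestNotiii}: if some $b$ had $|\mathrm{Im}(z_j/b)|\le1$ for all $j$, then no point of $E$ lies outside the strip $b\cdot\{|\mathrm{Im}|\le1\}$. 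I expect to rule this out using $\mathrm{Im}(z_1/a)=1$ and $\mathrm{Im}(z_3/a)<-1$ together with Lemma~\ref{lemIfLargeImaginaryOppositeSignsThenNotApprox}. For $z_1,z_3$ in the same strip, up to sign they would be approximately collinear or on opposite sides; the opposite-side option is excluded by Lemma~\ref{lemIfNotAsteriskThenNoOppositeStrips} (which would make $E$ strip-splittable, handled separately). If $|z_3|<|z_1|$, then $z_1$ is the largest and the pair is $\{z_2,z_3\}$, approximately collinear by~\ref{IV}. I again apply Lemma~\ref{lemLargestNotInTheSameStrip} with roles permuted, needing the same no-common-strip condition, and then Lemma~\ref{lemBiggestDoesNotBelongToConvexHull}.

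The main obstacle is verifying condition~\ref{LargestNotiii} in case~\ref{partialcasesii}, i.e.\ excluding a common rotated strip containing $z_1$ and $z_3$. I would try first a continuity/monotonicity argument in the rotation angle, as in the proof of Proposition~\ref{propIfSharpNotAsteriskThenCases}, using that $\mathrm{Im}(ze^{-it})$ decreases in $t$ where the real part is positive.
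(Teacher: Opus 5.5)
Your proposal is correct and follows the paper's proof essentially step for step. In case~(i) you order by real part and combine Lemmas~\ref{lemInSameStrip}, \ref{lemComparisonInStrip}, \ref{lemBiggestDoesNotBelongToConvexHull} and~\ref{lemFinishingSplitting}, and your explicit treatment of the tie $\mathrm{Re}(z_2)=\mathrm{Re}(z_3)$ just fills in the paper's ``clearly''. In case~(ii) you take the largest of $z_1,z_3$ and apply Lemma~\ref{lemLargestNotInTheSameStrip} to the remaining approximately collinear pair.

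The ``main obstacle'' you flag is already settled by the dichotomy you sketch, and that dichotomy is exactly the paper's argument. A common strip would make $z_1,z_3$ either approximately collinear, which Lemma~\ref{lemIfLargeImaginaryOppositeSignsThenNotApprox} rules out, or on opposite sides of the origin. In the second case $E$ is splittable by a strip via Lemma~\ref{lemIfNotAsteriskThenNoOppositeStrips}, and Proposition~\ref{propSplittingIfAsterisk} finishes, so no continuity argument is needed.
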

	\begin{proof}
		To prove~\ref{partialcasesi} assume that $E$ satisfies condition~\ref{casei} of Proposition~\ref{propIfSharpNotAsteriskThenCases}. Then for some $a \in \TT$ such that for all $z \in E$ we have inequalities $\mathrm{Re}(z/a) \ge 0$ and $|\mathrm{Im}(z/a)| \le 1$. Clearly, we can choose an ordering $E = \{z_1, z_2, z_3\}$ such that $\mathrm{Re}(z_1/a) \le \mathrm{Re}(z_2/a) \le \mathrm{Re}(z_3/a)$ and if $\mathrm{Re}(z_2/a) = \mathrm{Re}(z_3/a)$, then $\mathrm{Im}(z_3/a) \ne \mathrm{Im}(z_1/a)$. Let $F = F(z_1/a, z_2/a)$. By Lemma~\ref{lemInSameStrip} $z_3/a \notin F$, and, since $F$ is well-separated from zero, we conclude that $F \cap \overline{\mathbb D} = \emptyset$. Moreover, by Lemma~\ref{lemComparisonInStrip} we have $|z_3/a| \ge \max\{|z_1/a|, |z_2/a|\}$, so $z_3/a \notin \conv(F \cup \overline{\mathbb D})$ by Lemma~\ref{lemBiggestDoesNotBelongToConvexHull}. Since, obviously, $1/a$ and $-1/a$ belong to $\overline{\mathbb D}$, by Lemma~\ref{lemFinishingSplitting}, the set
		\[
		A = \left\{\dfrac{1}{a}, -\dfrac{1}{a}, \dfrac{z_1}{a}, \dfrac{z_2}{a}, \dfrac{z_3}{a}, \infty\right\}
		\]
		is splittable. Therefore, $\{-1, 1, \infty\} \cup E$ is splittable, for $A$ is the image of this set with respect to scalar multiplication by $a^{-1}$.
		
		Now we prove~\ref{partialcasesii}. From Proposition~\ref{propIfSharpNotAsteriskThenCases}~\ref{caseii} and Lemma~\ref{lemIfLargeImaginaryOppositeSignsThenNotApprox} it is easy to conclude that  $z_1$ and $z_3$ are not approximately collinear. Thus, we may only consider the case, when there is no $a \in \TT$ such that $|\mathrm{Im}(z_j/a)| \le 1$ for all $j \in \{1,2,3\}$ (indeed, if such $a$ exists, then, since $z_1$ and $z_3$ are not approximately collinear, $\mathrm{Re}(z_1/a)$ and $\mathrm{Re}(z_3/a)$ have the opposite signs; but then $E$ is splittable by a strip due to Lemma~\ref{lemIfNotAsteriskThenNoOppositeStrips} and the proof is finished by applying Proposition~\ref{propSplittingIfAsterisk}). From the assumption $|z_1| \ge |z_2|$ we can conclude that either $|z_1| \ge \max \{|z_2|, |z_3|\}$, or $|z_3| \ge \max \{|z_1|, |z_2|\}$. Both pairs $(z_1, z_2)$ and $(z_2, z_3)$ are approximately collinear (the first pair due to conditions~\ref{I} and~\ref{II}, and the second due to~\ref{IV}). Thus, we may enumerate $E$ in a way $E = \{w_1, w_2, w_3\}$ such that $w_1$ and $w_2$ are approximately collinear, and $ |w_3| \ge \max\{|w_1|, |w_2|\}$. By Lemma~\ref{lemLargestNotInTheSameStrip} we see that $w_3 \notin F$, where $F = F(w_1, w_2)$. Further, since $ |w_3| \ge \max\{|w_1|, |w_2|\}$ by Lemma~\ref{lemBiggestDoesNotBelongToConvexHull} we conclude that $w_3 \notin \conv(F \cup \overline{\mathbb D})$. Finally, $F \cap \overline{\mathbb D} = \emptyset$, since $F$ is well-separated from zero, and $w_1$ and $w_2$ are approximately collinear. Thus, $\{-1,1,\infty\} \cup E$ is splittable by Lemma~\ref{lemFinishingSplitting}.
	\end{proof}
	
	The last case of Proposition~\ref{propIfSharpNotAsteriskThenCases} that is not covered in Lemma~\ref{lemIfSharpAndImpersandPartialThenSplittable} can be handled using the following fact (see Fig.~\ref{figDeformedCircleInStrip}).
	\FloatBarrier
	\begin{lemma}\label{lemDeformedCircleInImpersandCase}
		Let $z_1, z_2 \in \CC$ satisfy $1 \le \mathrm{Re}(z_1) < \mathrm{Re}(z_2)$, $\mathrm{Im}(z_1) = 1$, and $|\mathrm{Im}(z_2)| \le 1$. Moreover, assume that $F(z_1, z_2) \cap \overline{\mathbb D} = \emptyset$. Then there exists a closed disc $F \subset \CC$ such that $z_1, z_2 \in F$, $F \cap \overline{\mathbb D} = \emptyset$, and $F \subset \{z \in \CC: \mathrm{Im}(z) \ge -1\}$.
	\end{lemma}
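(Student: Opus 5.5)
The plan is to use the pencil of discs whose boundary circles pass through $z_1$ and $z_2$, and to pick one member of it whose lowest point is $z_2$ itself. Write $z_1 = x_1 + i$, $z_2 = x_2 + iy_2$ with $1 \le x_1 < x_2$ and $|y_2| \le 1$. Let $L$ be the line through $z_1$ and $z_2$; its slope is $k = (y_2 - 1)/(x_2 - x_1) \le 0$. Let $H$ be the closed half-plane bounded by $L$ that contains the points $z_2 + it$ for $t > 0$, i.e. the half-plane "above" $L$.

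\emph{Step 1: $H$ is almost disjoint from $\overline{\mathbb D}$.} On $L$ we have $y = 1 + k(x - x_1)$, so $\mathrm{Im}$ of the point of $L$ over $x=0$ equals $1 - kx_1 \ge 1 > 0$. Hence the origin lies strictly on the other side of $L$ from $H$. Its distance to $L$ is $(1 - kx_1)/\sqrt{1+k^2}$. The inequality $(1-kx_1)^2 > 1 + k^2$ reduces to $-2kx_1 + k^2(x_1^2 - 1) > 0$, which holds for $k<0$ because $x_1 \ge 1$. So $H \cap \overline{\mathbb D} = \emptyset$ when $k<0$. When $k=0$ (i.e. $y_2 = 1$), $H = \{\mathrm{Im} \ge 1\}$ and $H \cap \overline{\mathbb D} = \{i\}$.

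\emph{Step 2: the pencil lemma.} Let $D$ be any closed disc whose boundary passes through $z_1,z_2$ and whose center lies in $H$, on the same side of $L$ as $H$ or on $L$. I claim $D \subset F(z_1,z_2) \cup H$ and $D \cap L = \conv\{z_1,z_2\}$. The second claim is immediate. For the first, take $w \in D \setminus H$. Then $w$ lies in the minor cap of $D$ cut off by the chord, so the inscribed-angle comparison gives $\angle z_1 w z_2 \ge \pi/2$. This means $\mathrm{Re}\big((w - z_1)(\overline w - \overline{z_2})\big) \le 0$, so $w \in F(z_1,z_2)$ (the closed version of Lemma~\ref{lemCriterionForDiscOOnDiameter}). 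This can also be checked directly in coordinates where the chord is $[-1,1]$ and the center is $is$ with $s \ge 0$.

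It follows that $D \cap \overline{\mathbb D} \subset (F(z_1,z_2) \cup H) \cap \overline{\mathbb D}$. This set is empty if $k<0$. If $k=0$ it is contained in $\{i\}$, and $i \notin D$ because $D \cap L = \conv\{z_1,z_2\} \subset \{\mathrm{Re} \ge 1\}$.

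\emph{Step 3: choice of $D$.}

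If $k<0$, take the center $c = x_2 + i(y_2 + r)$, where $r>0$ solves $|c - z_1| = r$. Writing out $|c-z_1|^2 = r^2$ gives $(x_2-x_1)^2 + (y_2 - 1)^2 + 2r(y_2 - 1) = 0$, which is linear in $r$. It has a positive solution precisely because $y_2 < 1$. The center lies vertically above $z_2$, hence in the interior of $H$, so Step 2 applies. The lowest point of $D = c + r\overline{\mathbb D}$ is $c - ir = z_2$. Therefore $D \subset \{\mathrm{Im} \ge y_2\} \subset \{\mathrm{Im} \ge -1\}$.

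If $k=0$, the center must lie on the vertical bisector $\mathrm{Re} = (x_1+x_2)/2$. Take $c = (x_1+x_2)/2 + i(1+s)$ with $s \ge 0$ and radius $\sqrt{s^2 + d^2}$, where $d = (x_2-x_1)/2$. The lowest point then has imaginary part $1 + s - \sqrt{s^2+d^2}$, which tends to $1$ as $s \to \infty$. So for $s$ large this disc lies in $\{\mathrm{Im} \ge -1\}$, and Step 2 gives disjointness from $\overline{\mathbb D}$.

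The step needing the most care is Step 2: the containment of the lower cap in $F(z_1,z_2)$, together with the degenerate tangency at $i$ in the case $y_2 = 1$. The rest is a short computation.
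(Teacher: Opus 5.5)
Your proposal is correct and follows essentially the paper's proof. It uses the same pencil of discs through $z_1,z_2$ with centers pushed to the side of the chord away from the origin, the same inclusion of the near-side cap in $F(z_1,z_2)$, and the same choice of disc: the one whose lowest point is $z_2$, or a very large one when $\mathrm{Im}(z_2)=1$. The only difference is cosmetic. You show that the far closed half-plane misses $\overline{\mathbb D}$, treating the tangency at $i$ separately. The paper instead shows that $\overline{\mathbb D}$ lies in the near closed half-plane and that each disc meets that half-plane only inside $F(z_1,z_2)$.
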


	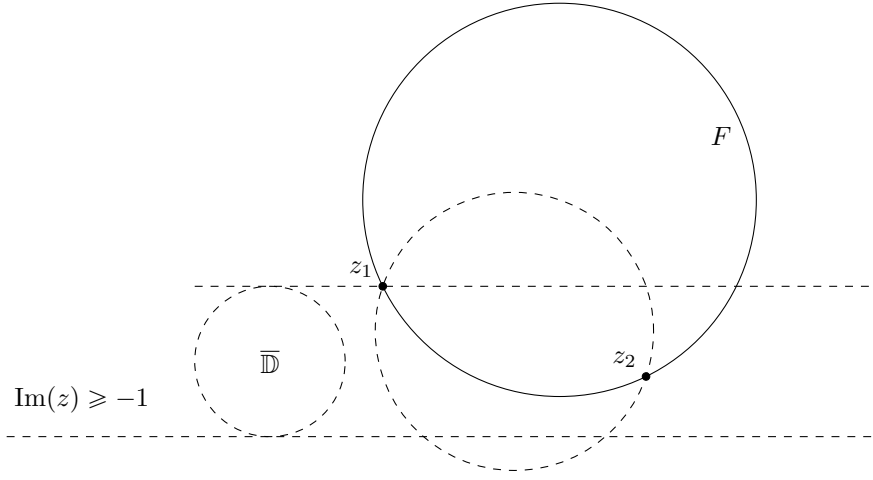
\begin{figure}
		\begin{tikzpicture}
			\draw[dashed] (0,0) circle[radius = 1];
			\draw[dashed] (-3.5, -1) -- (8, -1);
			\draw[dashed] (-1, 1) -- (8, 1);
			\draw[dashed] (3.25, 0.4) circle[radius = {sqrt(49/16 + 9/25)}];
			\filldraw (1.5, 1) circle[radius = 0.05] node[above left] {$z_1$};
			\filldraw (5, -0.2) circle[radius = 0.05] node[above left] {$z_2$};
			\draw ({3.25 + 1.2/2}, {0.4 + 3.5/2}) circle[radius = {sqrt(49/8 + 18/25)}];
			\node at (6, 3) {$F$};
			\node at (0,0) {$\overline{\DD}$};
			\node at (-2.5, -0.5) {$\mathrm{Im}(z) \ge -1$};
		\end{tikzpicture}
		\caption{The disc $F$ from Lemma~\ref{lemDeformedCircleInImpersandCase}.}
		\label{figDeformedCircleInStrip}
	\end{figure}
	\begin{proof}
		Let $c = (z_1 + z_2)/2$ and $v = i(z_2 - z_1)/2$. For $t \ge 0$ we consider the closed disc $F(t) = \{c + tv + |v|\sqrt{1 + t^2}\cdot\overline{\mathbb D}\}$. From the calculation
		\[
		|c + tv - z_1| = \left| \dfrac{z_2 - z_1}{2} + \dfrac{it(z_2 - z_1)}{2} \right| = |v|\sqrt{1 + t^2} = |c + tv - z_2|
		\]
		it is clear that $z_1, z_2 \in F(t)$ for all $t$.
		
		At first we check that $F(t) \cap \overline{\mathbb D} = \emptyset$ for all $t \ge 0$. Note that $F(0) = F(z_1,z_2)$, so $F(0) \cap \overline{\mathbb D} = \emptyset$ by assumtions. Consider the half-plane $H = \{c + s v: s \in \CC, \mathrm{Re}(s) \le 0\}$. We claim that $\overline{\mathbb D} \subset H$. Assume that $z \notin H$, so $z = c + i\alpha v + \beta v$, where $\alpha, \beta \in \mathbb R$, $\beta > 0$. Since $z_1 = c + iv$, we can also write $z = z_1 + i(\alpha - 1)v + \beta v$. Now note, that from assumptions on $z_1$ and $z_2$ it follows that $\mathrm{Re}(v) \ge 0$ and $\mathrm{Im}(v) > 0$. If $\alpha - 1 \ge 0$, then $\mathrm{Im}(i (\alpha - 1) v) \ge 0$, and therefore, $\mathrm{Im}(z) > \mathrm{Re}(z_1) = 1$. On the other hand, if $\alpha - 1 < 0$, then $\mathrm{Re}(i(\alpha - 1)v) > 0$, so $\mathrm{Re}(z) > \mathrm{Im}(z_1) \ge 1$. Thus, in any case $|z| > 1$. It follows that $\CC \setminus H$ does not intersect $\overline{\mathbb D}$, so $\overline{\mathbb D} \subset H$. Now we claim that $F(t) \cap H \subset F(0) \cap H$ for all $t \ge 0$. Indeed, assume that $z \in F(t) \cap H$. Then $z = c + i \alpha v + \beta v$, where $\alpha, \beta \in \RR$, $\beta \le 0$. Moreover, since $z \in F(t)$, we have the inequality $|z - c - tv| \le |v|\sqrt{1 + t^2}$. This means that
		\[
		|i\alpha v + \beta v - tv| \le |v| \sqrt{1 + t^2} \Rightarrow (\beta - t)^2 + \alpha^2 \le 1 + t^2 \Rightarrow \beta^2 + \alpha^2 - 2\beta t \le 1.
		\]
		Since $\beta \le 0$ and $t \ge 0$ we have $\alpha^2 + \beta^2 \le 1 + 2 \beta t \le 1$. Thus, $|z - c| \le |v|$, i.e. $z \in F(0)$. Finally, we can conclude that $F(t) \cap \overline{\mathbb D} = \emptyset$ for all $t \ge 0$. Indeed, \[
		F(t) \cap \overline{\mathbb D} \subset F(t) \cap \overline{\mathbb D} \cap H \subset F(0) \cap \overline{\mathbb D} \cap H \subset F(0) \cap \overline{\mathbb D} = \emptyset.
		\]
		
		To finish the proof it remains to show that for some $t \ge 0$ we have the inclusion $F(t) \subset \{z \in \CC: \mathrm{Im}(z) \ge -1\}$. At first we consider the case when $\mathrm{Im}(z_2) < 1$. Then $\mathrm{Re}(v) > 0$. We claim that we may choose $t \ge 0$ in order to satisfy the equality $\mathrm{Re}(c + tv) = \mathrm{Re}(z_2)$. Indeed, since $z_2 = c - iv$, this is equivalent to
		\[
		t\mathrm{Re}(v) = -\mathrm{Re}(iv) = \mathrm{Im}(v) \Rightarrow t = \dfrac{\mathrm{Im}(v)}{\mathrm{Re}(v)} \ge 0.
		\]
		We already calculated that $\sqrt{1 + t^2}|v| = |c + tv - z_2|$. Since $\mathrm{Re}(c + tv) = \mathrm{Re}(z_2)$ and $\mathrm{Im}(c + tv) \ge \mathrm{Im}(z_2)$, it follows that $c + tv = z_2 + i\sqrt{1 + t^2} |v|$. Thus, for all $z \in F(t)$ we have $\mathrm{Im}(z) \ge \mathrm{Im}(c + tv) - \sqrt{1 + t^2} |v| = \mathrm{Im}(z_2) \ge -1$. It remains to consider the case $\mathrm{Im}(z_2) = 1$. But then $\mathrm{Im}(c) = 1$ and $\mathrm{Re}(v) = 0$. Thus, to have the inclusion $F(t) \subset \{z \in \CC: \mathrm{Im}(z) \ge -1\}$ it suffices to satisfy the inequality $1 + t \mathrm{Im}(v) - |v|\sqrt{1 + t^2} \ge -1$. But this inequality is clearly satisfied for large $t$, as $\mathrm{Im}(v) = |v|$, and $t - \sqrt{1 + t^2} \to 0$ as $t \to +\infty$.
	\end{proof}
	
	\begin{proof}[Proof of Proposition~\ref{propIfWellSeparatedThenSplittable}]
		If $E$ is splittable by a strip, then $\{-1, 1, \infty\} \cup E$ is splittable due to Proposition~\ref{propReductionToSharp}. Thus, we may assume that $E$ is not splittable by a strip. Since $E$ is also distinguished, one of the cases of Proposition~\ref{propIfSharpNotAsteriskThenCases} holds for $E$. If case~\ref{casei} holds, then $\{-1, 1, \infty\} \cup E$ is splittable by Lemma~\ref{lemIfSharpAndImpersandPartialThenSplittable}~\ref{partialcasesi}. Now we assume that $E$ satisfies Proposition~\ref{propIfSharpNotAsteriskThenCases}~\ref{caseii}. Let the number $a \in \TT$ and the enumeration $E = \{z_1, z_2, z_3\}$ satisfy the conditions of Proposition~\ref{propIfSharpNotAsteriskThenCases}~\ref{caseii}. If $|z_1| \ge |z_2|$, then $\{-1, 1, \infty\} \cup E$ is splittable by Lemma~\ref{lemIfSharpAndImpersandPartialThenSplittable}~\ref{partialcasesii}. Thus, we may assume that $|z_1| < |z_2|$. By Lemma~\ref{lemComparisonInStrip} this means that $\mathrm{Re}(z_1/a) < \mathrm{Re}(z_2/a)$. On the other hand, from Proposition~\ref{propIfSharpNotAsteriskThenCases}~\ref{caseii}~\ref{I} and~\ref{II} we have $\mathrm{Im}(z_1/a) = 1$ and $|\mathrm{Im}(z_2/a)| \le 1$. Moreover, as $z_1 \notin \Omega$ we have $|z_1| \ge \sqrt{3}$, so $\mathrm{Re}(z_1/a) \ge \sqrt{2} > 1$. Thus, the points $z_1/a$ and $z_2/a$ satisfy the conditions of Lemma~\ref{lemDeformedCircleInImpersandCase} (we have $F(z_1/a, z_2/a) \cap \overline{\mathbb D} = \emptyset$, since $E$ is well-separated from zero, and $z_1$ and $z_2$ are approximately collinear). Thus, there exists a closed disc $F \subset \CC$ such that $z_1/a, z_2/a \in F$, $F \cap \overline{\mathbb D} = \emptyset$, and $F  \subset \{z \in \CC: \mathrm{Im}(z) \ge -1\}$. Since $\mathrm{Im}(z_3/a) < -1$ by Proposition~\ref{propIfSharpNotAsteriskThenCases}~\ref{caseii}~\ref{III}, we conclude that $z_3/a \notin \conv(F \cup \overline{\mathbb D})$. Thus, the set 
		\[
		A = \left\{\dfrac{1}{a}, -\dfrac{1}{a}, \dfrac{z_1}{a}, \dfrac{z_2}{a}, \dfrac{z_3}{a}, \infty\right\}
		\]
		is splittable, and so is $\{-1, 1, \infty\} \cup E$.
	\end{proof}
	
	\section{Splitting in the remaining cases}\label{sec:Calculations}
	\FloatBarrier
	Now it remains only to prove splittability of $\{-1,1,\infty\} \cup E$, if $E$ is distinguished and not well-separated from zero, i.e. it contains a pair of approximately collinear points $z_1,z_2$ such that $F(z_1,z_2) \cap \overline{\mathbb \overline{D}} \ne \emptyset$. This situation is possible (see Fig.~\ref{figExampleOfIntersectingDiscs}), but we shall see that the positions of $z_1$ and $z_2$ are very restricted. We start by collecting some properties of $z_1$ and $z_2$.
	
	\begin{figure}
		\begin{tikzpicture}
			\draw[dashed] (-0.25, -2) -- (3.5, 3);
			\draw[dashed] (-2.75, -2) -- (1, 3);
			\draw (0,0) circle[radius = 1];
			\draw[black, dashed, domain = -120:120] plot({1 + 2*cos(\x)}, {2*sin(\x)});
			\draw[black, dashed, domain = 60:300] plot({-1 + 2*cos(\x)}, {2*sin(\x)});
			\filldraw (0.13, 1.83) circle[radius = 0.05] node[above left] {$z_1$};
			\filldraw (2.33, 1.55) circle[radius = 0.05] node[above left, yshift=3] {$z_2$};
			\draw (1.23, 1.69) circle[radius = {sqrt(2.2*2.2 + 0.28*0.28)/2}];
		\end{tikzpicture}
		\caption{Example of a pair of approximately collinear points $z_1, z_2 \in \CC \setminus \Omega$ such that $F(z_1,z_2) \cap \overline{\DD} \ne \emptyset$.}
		\label{figExampleOfIntersectingDiscs}
	\end{figure}
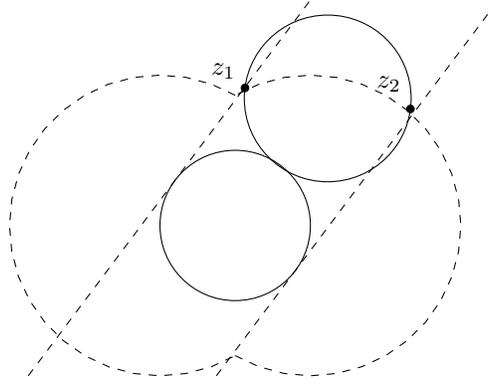

	\begin{lemma}\label{lemPropertiesOfIntersectingCirclesBasic}
		Let $z_1,z_2 \in \CC \setminus \Omega$ be a pair of approximately collinear points such that $F(z_1,z_2) \cap \overline{\mathbb D} \ne \emptyset$. Assume that $|z_1| \le |z_2|$. Then the following statements hold.
		\begin{enumerate}[label=(\roman*)]
			\item\label{CirclePropertyi} $|z_1| \le \sqrt{5}$.
			\item\label{CirclePropertyii} $|z_1 - z_2| \le 2\sqrt{2}$.
			\item\label{CirclePropertyiii} If $|z_1 - z_2| = 2\sqrt{2}$, then $z_1 = \pm \sqrt{3}i$.
			\item\label{CirclePropertyiv} Either $\mathrm{Im}(z_1), \mathrm{Im}(z_2) > 0$, or $\mathrm{Im}(z_1), \mathrm{Im}(z_2) < 0$.
		\end{enumerate}
	\end{lemma}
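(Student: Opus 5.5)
The plan is to normalize by a rotation using approximate collinearity, turn the condition $F(z_1,z_2)\cap\overline{\mathbb D}\ne\emptyset$ into one inequality in real coordinates, and read off \ref{CirclePropertyi}--\ref{CirclePropertyiii} from it. Statement~\ref{CirclePropertyiv} is then obtained in the original coordinates from the geometry of $\CC\setminus\Omega$.

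\textbf{Normalization.} Since $|z_1|,|z_2|\ge\sqrt3>1$, Lemma~\ref{lemEqualImagPartsInStrip} gives $b\in\TT$ with $z_1/b,z_2/b\in\Sigma$ and opposite imaginary parts. Rotations preserve $|z_j|$, $|z_1-z_2|$ and the property $F(z_1,z_2)\cap\overline{\mathbb D}\ne\emptyset$. So for \ref{CirclePropertyi}--\ref{CirclePropertyiii} I may write $z_1=\alpha+i\delta$ and $z_2=\beta-i\delta$ with $0\le\alpha$ and $|\delta|\le1$. Then $|z_1|\le|z_2|$ forces $\alpha\le\beta$, and $|z_1|\ge\sqrt3$ gives $\alpha\ge\sqrt{3-\delta^2}\ge\sqrt2$. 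The center of $F(z_1,z_2)$ is the real number $u=(\alpha+\beta)/2>1$, and its radius is $\sqrt{h^2+\delta^2}$ with $h=(\beta-\alpha)/2$. The intersection condition is $u\le 1+\sqrt{h^2+\delta^2}$. Squaring (legitimate since $u>1$) and using $u^2-h^2=\alpha\beta$, this becomes the key inequality
\[
(\alpha-1)(\beta-1)\le\delta^2\le 1 .
\]

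\textbf{Items \ref{CirclePropertyi}--\ref{CirclePropertyiii}.} For \ref{CirclePropertyi}: since $\beta\ge\alpha$, the key inequality gives $(\alpha-1)^2\le\delta^2$, so $\alpha\le1+|\delta|$. Hence $|z_1|^2=\alpha^2+\delta^2\le(1+|\delta|)^2+\delta^2\le5$.

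For \ref{CirclePropertyii}: put $s=\alpha-1\ge s_0(\delta):=\sqrt{3-\delta^2}-1>0$. Then $\beta-\alpha\le \delta^2/s-s\le \delta^2/s_0-s_0$, because $s\mapsto\delta^2/s-s$ is decreasing. Therefore
\[
|z_1-z_2|^2=(\beta-\alpha)^2+4\delta^2\le g(d):=\Bigl(\frac{d}{s_0}-s_0\Bigr)^2+4d,\qquad d=\delta^2\in[0,1],\ s_0=\sqrt{3-d}-1 .
\]
(If $\delta^2/s_0-s_0<0$, then $\beta-\alpha\le 0$ is trivial and only $4d\le 4$ remains.) I will show $g(d)\le 8$ on $[0,1]$, with equality only at $d=1$. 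At $d=1$ one has $s_0=\sqrt2-1$ and $1/s_0-s_0=2$, so $g(1)=8$. The most natural route is to check that $g$ is increasing, since both $d$ and $1/s_0$ increase with $d$.

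For \ref{CirclePropertyiii}: equality forces $\delta^2=1$ and $\alpha=1+s_0=\sqrt2$, so $|z_1|=\sqrt3$. Back in the original coordinates, $z_1$ lies on the circle $|z|=\sqrt3$ and outside $\Omega$. I expect the only such points to be $\pm\sqrt3 i$, which is a direct check from $|z\mp1|\ge2$.

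\textbf{Item \ref{CirclePropertyiv}.} Here I work in the original coordinates. Points $z\notin\Omega$ with $|z|^2\le5$ satisfy $|z|^2\mp2\mathrm{Re}(z)+1\ge4$, hence $|\mathrm{Re}(z_1)|\le1$ and $|\mathrm{Im}(z_1)|\ge\sqrt2$. Say $\mathrm{Im}(z_1)>0$, and suppose $\mathrm{Im}(z_2)\le0$ with $z_2\notin\Omega$. I would show $|z_1-z_2|\ge 2\sqrt2$, with equality only in configurations like $z_1=1+2i$, $z_2=3$. Such configurations are excluded by \ref{CirclePropertyii} and \ref{CirclePropertyiii}, since there $z_1\ne\pm\sqrt3 i$. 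Concretely, $z_1$ lies in the cap $\{|z\mp1|\ge2,\ |z|\le\sqrt5,\ \mathrm{Im}(z)>0\}$, and $z_2$ lies in the closed lower half-plane minus $\Omega$. The distance between these two sets is attained at corner points, and the candidates can be checked directly.

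I expect the main obstacle to be the monotonicity (or direct bound) $g(d)\le8$ in \ref{CirclePropertyii}, together with making the distance estimate in \ref{CirclePropertyiv} rigorous without a picture.
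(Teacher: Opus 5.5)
Your treatment of (i)--(iii) is the paper's argument in a more explicit form. After normalizing with Lemma~\ref{lemEqualImagPartsInStrip}, your inequality $(\alpha-1)(\beta-1)\le\delta^2$ is the squared form of the paper's condition $g(\alpha,\beta,\delta)\le 1$, where $g$ is the minimal real part of $F$. It replaces the paper's monotonicity argument for $g$. Your monotonicity argument for your own $g(d)$ is fine, but you do not need it. Since $s=\alpha-1\ge\sqrt{2}-1$ and $\delta^2\le 1$, you get $\beta-\alpha\le \delta^2/s-s\le 1/(\sqrt{2}-1)-(\sqrt{2}-1)=2$. Together with $4\delta^2\le 4$ this gives $|z_1-z_2|^2\le 8$, with equality only if $\delta^2=1$ and $\alpha=\sqrt{2}$. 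This decoupled bound is exactly what the paper proves, and it removes what you called the main obstacle.

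For (iv) you take a genuinely different route. The paper uses approximate collinearity directly. It disposes of $\mathrm{Im}(z_2)\le -1$ by Lemma~\ref{lemIfLargeImaginaryOppositeSignsThenNotApprox}. For $\mathrm{Im}(z_2)\in[-1,0]$, where $|\mathrm{Re}(z_2)|\ge 1+\sqrt{3}$, it exhibits a rotation $a$ with $\mathrm{Im}(z_1/a)>1$ and $\mathrm{Im}(z_2/a)\le -1$, again contradicting that lemma. You instead derive (iv) from (i)--(iii) plus a purely metric fact about $\CC\setminus\Omega$. This avoids the special rotation, but makes (iv) depend on the equality case (iii).

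The metric fact is true, but your justification (``the distance is attained at corner points'') is not a valid principle, and this is the one real gap. It closes quickly:
\begin{itemize}
\item By the symmetry $z\mapsto -\bar z$, assume $x_2=\mathrm{Re}(z_2)\ge 0$. Write $z_1=x_1+iy_1$ with $|x_1|\le 1$, $y_1\ge\sqrt{2}$, and $z_2=x_2+iy_2$ with $y_2\le 0$.
\item Set $u=z_1-1$ and $v=z_2-1$, so $|u|,|v|\ge 2$.
\item Then $\mathrm{Re}(u\bar v)=(x_1-1)(x_2-1)+y_1y_2\le 0$. If $x_2\ge 1$, both terms are $\le 0$. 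If $0\le x_2<1$, then $|v|\ge 2$ forces $y_2\le-\sqrt{3}$, so $y_1y_2\le-\sqrt{6}<-2\le-(1-x_1)(1-x_2)$.
\item Hence $|z_1-z_2|^2=|u|^2+|v|^2-2\,\mathrm{Re}(u\bar v)\ge 8$.
\item Equality forces $|u|=|v|=2$, $x_2\ge 1$ and $y_2=0$. Then $z_2=3$, $x_1=1$, and $z_1=1+2i\ne\pm\sqrt{3}i$.
\end{itemize}
So (ii) rules out strict inequality and (iii) rules out equality. With this step inserted, your proof is complete.
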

	\begin{proof}
		First of all we choose (using Lemma~\ref{lemEqualImagPartsInStrip}) $a \in \TT$ such that $z_1/a, z_2/a \in \Sigma$ and $\mathrm{Im}(z_1/a) = -\mathrm{Im}(z_2/a)$. Then $z_1/a = \alpha + i\delta$ and $z_2/a = \beta - i\delta$, where $|\delta| \le 1$ and $0 \le \alpha \le \beta$ by Lemma~\ref{lemComparisonInStrip}. In the proof we shall several times use the following fact: if $z \in F(z_1/a, z_2/a)$, then
		\begin{equation}\label{eqSmallestRealPartInCircle}
			\mathrm{Re}(z) \ge g(\alpha, \beta, \delta) = \dfrac{\alpha+ \beta}{2} - \sqrt{\dfrac{(\alpha - \beta)^2}{4} + \delta^2}.
		\end{equation}
		By differentiation it is easy to verify that $g(\alpha, \beta, \delta)$ is nondecreasing with respect to $\alpha$ and $\beta$, and decreasing with respect to $|\delta|$ (when other variables are fixed).
		
		To prove~\ref{CirclePropertyi} assume the contrary, i.e. $|z_1| > \sqrt{5}$. Thus, $\beta \ge \alpha > 2$. Now for $z \in F(z_1/a, z_2/a)$ (applying~\eqref{eqSmallestRealPartInCircle} and an elementary inequality $\sqrt{t^2 + 1} \le t + 1$, which holds for all $t \ge 0$) we estimate
		\begin{multline*}
			\mathrm{Re}(z) \ge \dfrac{\alpha+ \beta}{2} - \sqrt{\dfrac{(\alpha - \beta)^2}{4} + \delta^2} \ge \dfrac{\alpha+ \beta}{2} - \sqrt{\dfrac{(\alpha - \beta)^2}{4} + 1} \ge \\ \dfrac{\alpha+ \beta}{2} - \dfrac{\beta - \alpha}{2} - 1 = \alpha - 1 > 1.
		\end{multline*}
		Thus, $\mathrm{Re}(z) > 1$ for all $z \in F(z_1/a, z_2/a)$, hence, $F(z_1/a, z_2/a) \cap \overline{\mathbb D} = \emptyset$. Clearly, this yields $F(z_1, z_2) \cap \overline{\mathbb D} = \emptyset$, leading to a contradiction.
		
		Now we prove~\ref{CirclePropertyii} and~\ref{CirclePropertyiii}. It is clear that $\alpha \ge \sqrt{2}$, for $|z_1| \ge \sqrt{3}$. It is easy to calculate that $g(\sqrt{2}, 2 + \sqrt{2}, 1) = 1$. Thus, by monotonicity properties of $g$ it is straightforward to prove that if $\beta - \alpha > 2$, $\alpha \ge \sqrt{2}$, and $|\delta| \le 1$, then $g(\alpha, \beta, \delta) > 1$. Since $F(z_1, z_2) \cap \overline{\mathbb D} \ne \emptyset$, then $g(\alpha, \beta, \delta) \le 1$, so we can conclude that $\beta - \alpha \le 2$. Thus, $|z_1 - z_2|^2 \le (\beta - \alpha)^2 + (2\delta)^2 \le 8$. That is, $|z_1 - z_2| \le 2\sqrt{2}$, and~\ref{CirclePropertyii} is proved. Moreover, it is easy to see that the equality $|z_1 - z_2| = 2\sqrt{2}$ is possible only if $\beta - \alpha = 2$ and $|\delta| = 1$. But the inequalities $g(\alpha, \beta, \delta) \le 1$ and $\alpha \ge \sqrt{2}$ then imply $\alpha = \sqrt{2}$ (otherwise, monotonicity of $g$ would imply that $g(\alpha, \beta, \delta) > 1$). Thus, $|z_1|^2 = \alpha^2 + 1 = 3$. To prove~\ref{CirclePropertyiii} it now suffices to note that the only two points in $\CC \setminus \Omega$ with absolute value $\sqrt{3}$ are $\pm\sqrt{3}i$.
		
		It remains to prove~\ref{CirclePropertyiv}. Clearly we may assume without loss of generality that $\mathrm{Im}(z_1) \ge 0$. Since $|z_1| \le \sqrt{5}$ and $|z_1 - 1|, |z_1 + 1| \ge 2$ it is easy to see that $|\mathrm{Re}(z_1)| \le 1$. Moreover, for all $z \in \CC \setminus \Omega$ such that $|\mathrm{Re}(z)| \le 1$ the inequality $|\mathrm{Im}(z)| \ge \sqrt{3}$ holds. Thus, $\mathrm{Im}(z_1) \ge \sqrt{3}$. Now we prove that $\mathrm{Im}(z_2) > 0$. Clearly, if $\mathrm{Im}(z_2) \le -1$, then $z_1$ and $z_2$ are not approximately collinear by Lemma~\ref{lemIfLargeImaginaryOppositeSignsThenNotApprox}. Thus, it remains to consider $z_2 \in \CC \setminus \Omega$ such that $\mathrm{Im}(z_2) \in [-1, 0]$. Assume, in addition, that $\mathrm{Re}(z_2) \ge 0$ (the other case is handled similarly). Then it is easy to see that $\mathrm{Re}(z_2) \ge 1 + \sqrt{3}$ (otherwise, we would have $|z_2 - 1| < 2|$). Then consider $a \in \TT$ such that $\mathrm{Im}(a) = 1/(1 + \sqrt{3})$ and $\mathrm{Re}(a) > 0$. It is easy to see that
		\[
		\mathrm{Im}(z_2/a) = \mathrm{Im}(z_2)\mathrm{Re}(a) - \mathrm{Re}(z_2)\mathrm{Im}(a) \le -1.
		\]
		Moreover, we can calculate that 
		\[
		a = \dfrac{\sqrt{\sqrt{3}(2+\sqrt{3})}}{1 + \sqrt{3}} + \dfrac{i}{1 + \sqrt{3}},
		\]
		so by using the already established inequalities $\mathrm{Im}(z_1) \ge \sqrt{3}$ and $\mathrm{Re}(z_1) \ge -1$ we obtain that
		\[
		\mathrm{Im}(z_1/a) = \mathrm{Im}(z_1)\mathrm{Re}(a) - \mathrm{Re}(z_1)\mathrm{Im}(a) \ge \dfrac{\sqrt{3\sqrt{3}(2+\sqrt{3})} - 1}{1 + \sqrt{3}}.
		\]
		An easy calculation shows that the last estimate on $\mathrm{Im}(z_1/a)$ is greater than $1$. Thus, $z_1/a$ and $z_2/a$ are not approximately collinear by Lemma~\ref{lemIfLargeImaginaryOppositeSignsThenNotApprox}.
	\end{proof}
	
	To prove some more involved statements than those listed in Lemma~\ref{lemPropertiesOfIntersectingCirclesBasic} we shall use parametric representation of points $z \in \CC \setminus \Omega$ with small absolute value.
	\begin{lemma}\label{lemAnglesOfSmallAbsoluteValuesOutsideOmega}
		Let $z \in \CC \setminus \Omega$ satisfy $|z| \le 3$ and $\mathrm{Im}(z) \ge 0$. Then $z$ can be represented as $z = re^{i\alpha}$, where $r = |z|$ and $\alpha \in [q(r), \pi - q(r)]$, $q(r) = \arccos((r^2 - 3)/2r)$.
	\end{lemma}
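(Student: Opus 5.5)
Write $z = re^{i\alpha}$ with $r = |z|$ and $\alpha \in [0,\pi]$; this is possible because $\mathrm{Im}(z) \ge 0$. The membership $z \notin \Omega$ says exactly that $|z - 1| \ge 2$ and $|z+1| \ge 2$. Squaring gives
\[
r^2 - 2r\cos\alpha + 1 \ge 4 \quad\text{and}\quad r^2 + 2r\cos\alpha + 1 \ge 4,
\]
which together are equivalent to $2r|\cos\alpha| \le r^2 - 3$. Since $z \notin \Omega$ we already know $r \ge \sqrt{3}$, so $r > 0$ and we may divide. The condition becomes $|\cos\alpha| \le (r^2-3)/(2r)$.

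Next I check that $q(r)$ is well defined. For $\sqrt{3} \le r \le 3$ we have $0 \le (r^2-3)/(2r) \le 1$: the lower bound is immediate, and the upper bound is equivalent to $r^2 - 2r - 3 \le 0$, i.e. $(r-3)(r+1) \le 0$. This holds for $r \le 3$, and this is exactly where the hypothesis $|z| \le 3$ is used. Hence $q(r) = \arccos((r^2-3)/2r) \in [0,\pi/2]$.

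Finally, for $\alpha \in [0,\pi]$ the inequality $|\cos\alpha| \le c$ with $c = \cos q$, $q\in[0,\pi/2]$, is equivalent to $-\cos q \le \cos\alpha \le \cos q$. Because $\cos$ is decreasing on $[0,\pi]$ and $-\cos q = \cos(\pi - q)$, this reads $q \le \alpha \le \pi - q$, which is the claim.

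There is no real obstacle here. The only points needing care are the range check that makes $\arccos$ defined (this requires $r \le 3$) and the use of monotonicity of $\cos$ on $[0,\pi]$, which needs $\mathrm{Im}(z) \ge 0$. As a byproduct, the argument shows the representation is also sufficient: every $z = re^{i\alpha}$ with $\sqrt{3} \le r \le 3$ and $\alpha$ in that range lies outside $\Omega$.
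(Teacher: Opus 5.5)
Your proof is correct and follows the paper's route. The paper also writes $z = re^{i\alpha}$ with $\alpha\in[0,\pi]$ and translates $|z\mp 1|\ge 2$ into a bound on $\cos\alpha$, though it phrases this contrapositively: $\cos\alpha > (r^2-3)/(2r)$ forces $|z-1|<2$, and symmetrically for $|z+1|$. Your explicit check that $(r^2-3)/(2r)\in[0,1]$ for $\sqrt{3}\le r\le 3$ makes $q(r)$ well defined and is where the hypothesis $|z|\le 3$ enters; the paper leaves this implicit.
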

	\begin{proof}
		Clearly, $z$ can be represented as $z = re^{i\alpha}$, where $\alpha \in [0, \pi]$.
		Then, by a straightforward calculation, one can show that $|z - 1| < 2$ if $z = re^{i\alpha}$ where $\alpha \in [0,\pi]$ with $\cos(\alpha) > (r^2 - 3)/2r$ (similarly, $|z + 1| < 2$ if $\cos(\alpha) < -(r^2 - 3)/2r)$.
	\end{proof}
	
	The following lemma states that if $z_1, z_2 \in \CC\setminus \Omega$ are approximately collinear, $|z_1-z_2| \ge 2$, and $F(z_1,z_2) \cap \overline{\mathbb D} \ne \emptyset$, then $F(z_1, z_2)$ still cannot be too close to zero (it is not separate from zero by distance $1$ anymore, but we can obtain a lower estimate on $|z|$ for $z \in F(z_1,z_2)$).
	
	\begin{lemma}\label{lemPropertiesOfIntersectingCirclesInvolved}
		Let $z_1,z_2 \in \CC \setminus \Omega$ be a pair of approximately collinear points such that $F(z_1,z_2) \cap \overline{\mathbb D} \ne \emptyset$. Assume that $|z_1| \le |z_2|$ and $|z_1 - z_2| \ge 2$. Then the following statements hold.
		\begin{enumerate}[label=(\roman*)]
			\item\label{CirclePropertyIi} $|z_2| \ge \sqrt{10 - |z_1|^2}$.
			\item\label{CirclePropertyIii} For all $z \in F(z_1,z_2)$ we have the inequality
			\[|z| \ge \dfrac{1}{2}\left(\sqrt{|z_1|^2 - 1} + \sqrt{9 - |z_1|^2} - \sqrt{12 -2\sqrt{(|z_1|^2 - 1)(9 - |z_1|^2)}}\right).\]
		\end{enumerate}
	\end{lemma}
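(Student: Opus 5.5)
Throughout, I would normalize by Lemma~\ref{lemEqualImagPartsInStrip}: after a rotation (which changes neither the hypotheses nor the conclusions) we have $z_1=\alpha+i\delta$, $z_2=\beta-i\delta$ with $|\delta|\le 1$ and $0\le\alpha\le\beta$ (Lemma~\ref{lemComparisonInStrip}). Set $r=|z_1|$, $c=(z_1+z_2)/2$ and $\rho=|z_1-z_2|/2$, so that $F(z_1,z_2)=c+\rho\overline{\mathbb D}$. The basic tool for both parts is the parallelogram identity
\[
|z_1|^2+|z_2|^2=2|c|^2+2\rho^2 .
\]
Together with the elementary formula $\min_{z\in F(z_1,z_2)}|z|=|c|-\rho$ (valid when $0\notin F(z_1,z_2)$), it turns both statements into inequalities between $r$, $|z_2|$ and $\rho$. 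From Lemma~\ref{lemPropertiesOfIntersectingCirclesBasic} I would use the facts $\sqrt3\le r\le\sqrt5$ and $1\le\rho\le\sqrt2$. I would also use that both points lie in the upper half-plane (after a reflection), with $|\mathrm{Re}(z_1)|\le1$ and $\mathrm{Im}(z_1)\ge\sqrt3$.

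For \ref{CirclePropertyIi} the claim is $|z_1|^2+|z_2|^2\ge10$. The model case is $z_1=-1+2e^{i\varphi}$, $z_2=z_1+2=1+2e^{i\varphi}$, for which the sum equals exactly $10$. This suggests the identity
\[
|z_1|^2+|z_2|^2=|z_1+1|^2+|z_2-1|^2-2+2\,\mathrm{Re}(z_2-z_1).
\]
Since $z_1,z_2\notin\Omega$, this gives $|z_1|^2+|z_2|^2\ge 6+2\,\mathrm{Re}(z_2-z_1)$ (in the original, unrotated coordinates). The symmetric identity with the roles of $\pm1$ exchanged gives $|z_1|^2+|z_2|^2\ge 6+2\,\mathrm{Re}(z_1-z_2)$. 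So it suffices to have $|\mathrm{Re}(z_1-z_2)|\ge2$. When $|\mathrm{Re}(z_1-z_2)|<2$, I would instead use $|z_1-z_2|\ge2$ together with $\mathrm{Im}(z_1),\mathrm{Im}(z_2)>0$. In that case the imaginary parts must differ substantially, and I would bound $|z_2|$ from below directly, using Lemma~\ref{lemAnglesOfSmallAbsoluteValuesOutsideOmega}: write $z_j=r_je^{i\theta_j}$ with $\theta_j\in[q(r_j),\pi-q(r_j)]$. The condition $|z_1-z_2|^2=r_1^2+r_2^2-2r_1r_2\cos(\theta_1-\theta_2)\ge4$ then reduces the claim to a two-variable monotonicity check in $r_2$.

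For \ref{CirclePropertyIii} I would write the distance as
\[
|c|-\rho=\sqrt{\tfrac12(r^2+|z_2|^2)-\rho^2}-\rho .
\]
This is increasing in $|z_2|$ and decreasing in $\rho$, so the task is to find, for fixed $r$, the worst admissible pair $(|z_2|,\rho)$. These two quantities are not independent. By the law of cosines, $4\rho^2=r^2+|z_2|^2-2r|z_2|\cos\psi$, where $\psi$ is the angle between $z_1$ and $z_2$. This angle is constrained by approximate collinearity and by Lemma~\ref{lemAnglesOfSmallAbsoluteValuesOutsideOmega}: $z_1$ sits at angle at least $q(r)$ from the real axis, and $z_2$ must be outside $\Omega$.

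The target expression suggests the extremal configuration. The quantities $\sqrt{r^2-1}$ and $\sqrt{9-r^2}$ should be the real parts of $z_1$ and $z_2$ in the rotated frame when $|\delta|=1$, with $|z_2|^2=10-r^2$. This is consistent with \ref{CirclePropertyIi}, since then $\beta^2+1=10-r^2$. So I would show that the minimum of $|c|-\rho$ is attained at $\delta=\pm1$, $\alpha=\sqrt{r^2-1}$, $\beta=\sqrt{9-r^2}$. At that point one computes $|c|=(\alpha+\beta)/2$ and $4\rho^2=(\beta-\alpha)^2+4$. Since $(\beta-\alpha)^2=8-2\sqrt{(r^2-1)(9-r^2)}$, this gives $2\rho=\sqrt{12-2\sqrt{(r^2-1)(9-r^2)}}$, which is exactly the claimed bound.

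The reduction to this point is the main obstacle. It amounts to showing that
\[
\tfrac{\alpha+\beta}{2}-\sqrt{\tfrac{(\beta-\alpha)^2}{4}+\delta^2}
\]
is decreasing in $|\delta|$ and nondecreasing in $\beta$; this is the function $g$ from~\eqref{eqSmallestRealPartInCircle}, and here $c$ is real so $|c|=(\alpha+\beta)/2$. One then pushes $|\delta|\to1$ and $\beta$ down to its minimal admissible value $\sqrt{10-r^2-\delta^2}$ from \ref{CirclePropertyIi}. The difficulty is that $\alpha=\sqrt{r^2-\delta^2}$ also moves with $\delta$. I would handle this by substituting $\alpha$ and $\beta$ as functions of $\delta$ and checking the sign of the derivative of the resulting one-variable function on $[0,1]$, using $\sqrt3\le r\le\sqrt5$.
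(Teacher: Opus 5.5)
Your plan for (ii) is the paper's. In the rotated frame you bound $|z|\ge\mathrm{Re}(z)\ge g(\alpha,\beta,\delta)$, lower $\beta$ to $\sqrt{10-r^2-\delta^2}$ using (i), and minimize over $\delta$ at fixed $r$. The derivative check you propose for this last step does work: with $u=\delta^2$, $A=\sqrt{r^2-u}$, $B=\sqrt{10-r^2-u}$, every term of $\frac{d}{du}\bigl(A+B-\sqrt{10+2u-2AB}\bigr)$ is negative, so the minimum is at $|\delta|=1$.

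For (i) your only new ingredient is the identity, equivalently $|z|^2\ge 3+2|\mathrm{Re}\,z|$ for $z\notin\Omega$. It settles the case $|\mathrm{Re}(z_1-z_2)|\ge 2$ without even using the half-plane information. But the complementary case needs the polar-coordinate argument, and that argument, which is what the paper uses, proves (i) in all cases on its own. So the case split is harmless but redundant.

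You left the polar step as a ``monotonicity check'', so here is what it has to be. Both points lie in the upper half-plane, so Lemma~\ref{lemAnglesOfSmallAbsoluteValuesOutsideOmega} gives $|\theta_1-\theta_2|\le\pi-q(r_1)-q(r_2)$. Under the contrary assumption $t=r_2^2\in[r_1^2,7)$, and
\[
|z_1-z_2|^2\le r_1^2+t+2r_1\sqrt{t}\,\cos\bigl(q(r_1)+q(\sqrt{t})\bigr)=f(t),
\]
where
\[
f(t)=r_1^2+t+\tfrac12(r_1^2-3)(t-3)-\tfrac12\sqrt{(r_1^2-1)(9-r_1^2)}\sqrt{(t-1)(9-t)}.
\]
Three facts finish it:
\begin{itemize}
\item $f(10-r_1^2)=4$, which is your model configuration;
\item $f$ is strictly convex on $(1,9)$;
\item $f'(r_1^2)=r_1^2-3\ge 0$.
\end{itemize}
Hence $f$ is strictly increasing on $[r_1^2,10-r_1^2]$, and $r_2^2<10-r_1^2$ forces $|z_1-z_2|<2$, a contradiction.

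Do not copy the paper's displayed version of this step. It uses $\cos(\alpha-\beta)\ge\cos(q(r)+q(\rho))$ (there $\rho=|z_2|$), which has the opposite sign in front of the cosine. With that sign the function equals $16$, not $4$, at $t=10-r^2$, so the concavity/right-endpoint argument written there does not close. The sign above, combined with convexity and the left endpoint, is what makes (i) go through.
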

	\begin{proof}	
		We start by proving~\ref{CirclePropertyIi}. Due to~Lemma~\ref{lemPropertiesOfIntersectingCirclesBasic}~\ref{CirclePropertyiv} we may without loss of generality assume that $\mathrm{Im}(z_1), \mathrm{Im}(z_2) > 0$. Assume that $r = |z_1|$ and $\rho = |z_2| < \sqrt{10 - |z_1|^2}$. By Lemma~\ref{lemAnglesOfSmallAbsoluteValuesOutsideOmega} we may write $z_1 = re^{i\alpha}$ and $z_2 = \rho e^{i\beta}$, where $\alpha \in [q(r), \pi - q(r)]$ and $\beta \in [q(\rho), \pi - q(\rho)]$ (the value $q(r)$ is defined in Lemma~\ref{lemAnglesOfSmallAbsoluteValuesOutsideOmega}). Therefore, 
		\[
		|z_1 - z_2|^2 = (r e^{i\alpha} - \rho e^{i\beta})(r e^{-i\alpha} - \rho e^{-i\beta}) = r^2 + \rho^2 - 2\rho r \cos(\alpha - \beta).
		\]
		Clearly, for $\alpha \in [q(r), \pi - q(r)]$ and $\beta \in [q(\rho), \pi - q(\rho)]$ we can estimate $\cos(\alpha - \beta) \ge \cos(q(r) + q(\rho))$. Further, by the definition of $q(r)$ we have
		\[
		\cos(q(r)) = \dfrac{r^2 - 3}{2r}, \;\;\sin(q(r)) = \dfrac{\sqrt{10r^2 - r^4 - 9}}{2r} = \dfrac{\sqrt{(r^2 -1)(9 - r^2)}}{2r}.
		\]
		Therefore, 
		\[
		\cos(q(r) + q(\rho)) = \dfrac{(r^2 - 3)(\rho^2 - 3)}{4r\rho} - \dfrac{\sqrt{(r^2 -1)(9 - r^2)}\sqrt{(\rho^2 -1)(9 - \rho^2)}}{4r\rho}.
		\]
		Thus, we obtained the inequality $|z_1 - z_2|^2 \le f(\rho^2)$, where
		\[
		f(t) = t + r^2 - \dfrac{1}{2}(r^2 - 3)(t - 3) + \dfrac{1}{2}\sqrt{(r^2 -1)(9 - r^2)}\sqrt{(t -1)(9 - t)}
		\]
		It is straightforward to verify that $f(10 - r^2) = 4$. Thus, if $f$ strictly increases on the interval $(r^2, 10 - r^2)$, then $f(\rho^2) < 4$. This contradicts the assumption $|z_1 - z_2| \ge 2$. Therefore, to prove~\ref{CirclePropertyIi} it remains to verify that $f'(t) > 0$ for all $t \in (r^2, 10 - r^2)$. By an elementary calculation we have
		\[
		f'(t) = 1 - \dfrac{1}{2}(r^2 - 3) + \dfrac{(5 - t)\sqrt{(r^2 -1)(9 - r^2)}}{2\sqrt{(t -1)(9 - t)}}.
		\]
		By substituting there $t = 10 - r^2$ we obtain that $f'(10 - r^2) = 0$. By differentiating again we obtain that
		\[
		f''(t) = -\left({\sqrt{(r^2 -1)(9 - r^2)}}{2\sqrt{(t -1)(9 - t)}} + \dfrac{(5 - t)^2\sqrt{(r^2 -1)(9 - r^2)}}{4\sqrt{(t -1)^3(9 - t)^3}}\right) < 0
		\]
		for $t \in (1, 9)$. In particular, $f''(t) < 0$ for $t \in (r^2, 10 - r^2) \subset (3, 7)$ (note that $|z_1| \le \sqrt{5}$ by Lemma~\ref{lemPropertiesOfIntersectingCirclesBasic}~\ref{CirclePropertyi}), so $f'(t) > 0$ for all $t \in (r^2, 10 -r^2)$.
		
		To prove~\ref{CirclePropertyIii} choose $a \in \CC$ such that $|a| = 1$, $z_1/a$ and $z_2/a$ belong to $\Sigma$, and $\mathrm{Im}(z_1/a) = -\mathrm{Im}(z_2/a)$ (see Lemma~\ref{lemEqualImagPartsInStrip}). Thus, $z_1/a = \alpha + i\delta$ and $z_2/a = \beta - i\delta$, where $\beta \ge \alpha \ge 0$ and $\delta \in [-1,1]$ (the inequality $\alpha \le \beta$ follows from Lemma~\ref{lemComparisonInStrip}, as $|z_1| \le |z_2|$). Then (see~\eqref{eqSmallestRealPartInCircle})
		\begin{multline*}
			\inf \{|z|: z\in F(z_1,z_2)\} = \inf \{|z|: z\in F(z_1/a, z_2/a)\} = \\ \inf \left\{|z|: z \in \dfrac{\alpha + \beta}{2} + \left(\sqrt{\dfrac{(\alpha - \beta)^2}{4} + \delta^2}\right)\cdot \overline{\mathbb D} \right\} \ge \\ g(\alpha, \beta, \delta) = \dfrac{\alpha + \beta}{2} - \sqrt{\dfrac{(\alpha - \beta)^2}{4} + \delta^2}.
		\end{multline*}
		By~\ref{CirclePropertyIi} $\beta^2 + \delta^2 \ge 10 - \alpha^2 - \delta^2$, so $\beta \ge \sqrt{10 - \alpha^2 - 2\delta^2}$. Therefore, since $g(\alpha, \beta, \delta)$ is nondecreasing with respect to $\beta$, we conclude that
		\begin{multline*}
			g(\alpha, \beta, \delta) \ge h(\alpha, \delta) = \dfrac{\alpha + \sqrt{10 - \alpha^2 - 2\delta^2}}{2} - \sqrt{\dfrac{(\alpha - \sqrt{10 - \alpha^2 - 2\delta^2})^2}{4} + \delta^2} =\\
			\dfrac{1}{2}\left(\alpha + \sqrt{10 - \alpha^2 - 2\delta^2} - \sqrt{10 + 2\delta^2 - 2\alpha \sqrt{10 - \alpha^2 - 2\delta^2}}\right).
		\end{multline*}
		Now we rewrite the value $h(\alpha, \delta)$ in terms of $r = |z_1| = \sqrt{\alpha^2 + \delta^2}$ and $\delta$. It is easy to obtain the formula $h(\alpha, \delta) = \tilde h(r, \delta)$, where
		\begin{multline*}
			\tilde h(r, \delta) = \\ \dfrac{1}{2}\left(\sqrt{r^2 - \delta^2} + \sqrt{10 -  r^2 - \delta^2} - \sqrt{10 + 2\delta^2 - 2\sqrt{(r^2 - \delta^2)(10 - r^2 - \delta^2)}}\right).
		\end{multline*}
		It is easy to see that with fixed $r$ the function $\delta \mapsto \tilde h(r, \delta)$ achieves its minimum on $[-1,1]$ when $|\delta| = 1$. Thus,
		\begin{multline*}
			g(\alpha, \beta, \delta) \ge h(\alpha, \delta) = \tilde h(r, \delta) \le \tilde h(r, 1) = \\ \dfrac{1}{2}\left(\sqrt{r^2 - 1} + \sqrt{9 -  r^2} - \sqrt{12 - 2\sqrt{(r^2 - 1)(9 - r^2)}}\right).
		\end{multline*}
		Since $r = |z_1|$, we have proved the required estimate.
	\end{proof}

	Using the estimate from Lemma~\ref{lemPropertiesOfIntersectingCirclesInvolved}~\ref{CirclePropertyIii} we can now prove that we can slightly move the unit disc $\overline{\mathbb D}$ in order to avoid intersection with $F(z_1, z_2)$. In fact, $F(z_1, z_2)$ does not intersect one of discs $F_{\pm}$, where $F_{\pm}$ is the closed disc, such that the points $-1, 1, \pm \sqrt{3}i$ belong to the boundary of $F_{\pm}$ (see Fig.~\ref{figFMinus}).
	\FloatBarrier
	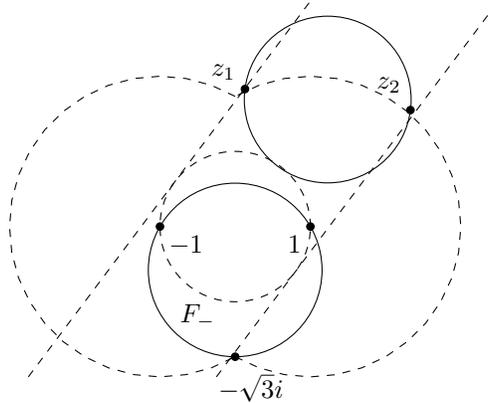
\begin{figure}
		\begin{tikzpicture}
				\draw[dashed] (-0.25, -2) -- (3.5, 3);
				\draw[dashed] (-2.75, -2) -- (1, 3);
				\draw[dashed] (0,0) circle[radius = 1];
				\draw[black, dashed, domain = -120:120] plot({1 + 2*cos(\x)}, {2*sin(\x)});
				\draw[black, dashed, domain = 60:300] plot({-1 + 2*cos(\x)}, {2*sin(\x)});
				\filldraw (0.13, 1.83) circle[radius = 0.05] node[above left] {$z_1$};
				\filldraw (2.33, 1.55) circle[radius = 0.05] node[above left, yshift=3] {$z_2$};
				\draw (1.23, 1.69) circle[radius = {sqrt(2.2*2.2 + 0.28*0.28)/2}];
				\filldraw (1, 0) circle[radius = 0.05] node[below left] {$1$};
				\filldraw (-1, 0) circle[radius = 0.05] node[below right] {$-1$};
				\filldraw (0, {-sqrt(3)}) circle[radius = 0.05] node[below, yshift=-3, xshift = 6] {$-\sqrt{3}i$};
				\draw (0,{-1/sqrt(3)}) circle[radius = {2/sqrt(3)}];
				\node at (-0.5, -1.2) {$F_-$};
			\end{tikzpicture}
			\caption{The disc $F_-$ drawn alongside the disc from Fig.~\ref{figExampleOfIntersectingDiscs}.}
			\label{figFMinus}
	\end{figure}
	
	\begin{lemma}\label{lemDeformedUnitCircle}
		Let $z_1,z_2 \in \CC \setminus \Omega$ be a pair of approximately collinear points such that $F(z_1,z_2) \cap \overline{\mathbb D} \ne \emptyset$. Assume that $|z_1 - z_2| \ge 2$ and let
		\[
		F_+ = \dfrac{1}{\sqrt{3}}i + \dfrac{2}{\sqrt{3}} \overline{\mathbb D},\;\;F_- = -\dfrac{1}{\sqrt{3}}i + \dfrac{2}{\sqrt{3}} \overline{\mathbb D}.
		\]
		Then, either $F(z_1,z_2) \cap F_+ = \emptyset$, or $F(z_1,z_2) \cap F_- = \emptyset$.
	\end{lemma}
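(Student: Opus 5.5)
We will show that if $\mathrm{Im}(z_1),\mathrm{Im}(z_2) > 0$ then $F(z_1,z_2) \cap F_- = \emptyset$; the case of negative imaginary parts is symmetric under complex conjugation, which swaps $F_+$ and $F_-$. By Lemma~\ref{lemPropertiesOfIntersectingCirclesBasic}~\ref{CirclePropertyiv} one of these two cases always occurs. So fix the upper case and order the points so that $|z_1| \le |z_2|$, and put $r = |z_1|$. The proof of Lemma~\ref{lemPropertiesOfIntersectingCirclesBasic}~\ref{CirclePropertyiv} gives $|\mathrm{Re}(z_1)| \le 1$ and $\mathrm{Im}(z_1) \ge \sqrt{3}$. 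Lemma~\ref{lemPropertiesOfIntersectingCirclesBasic}~\ref{CirclePropertyi} gives $r \in [\sqrt{3}, \sqrt{5}]$.

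The key observation is that $F_- \cap \{\mathrm{Im}(z) \ge 0\} \subset \overline{\mathbb D}$. Indeed, both circles pass through $\pm 1$, and on the upper side the arc of $\partial F_-$ is the short arc from $1$ to $-1$ through $i(2-1)/\sqrt{3} = i/\sqrt{3}$, which lies inside $\overline{\mathbb D}$. So $F_-$ meets the upper half-plane only in a lens with $|z| \le 1$ whose top is at height $1/\sqrt{3}$.

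It would therefore suffice to show two things: that $F(z_1,z_2)$ misses the lower half-plane part of $F_-$, and that it misses the part of $\overline{\mathbb D}$ with $\mathrm{Im}(z) \ge 0$. The second fails by hypothesis, so we need a sharper description of $F_- \cap \{\mathrm{Im} \ge 0\}$.

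We compute that for $z = x + iy$ with $y \ge 0$ in $F_-$ we have $x^2 + y^2 + (2/\sqrt{3})y \le 1$. Setting $y = s|z|$ with $s \in [0,1]$, this becomes $|z|^2 + (2/\sqrt{3})s|z| \le 1$. The plan is then to bound $F(z_1,z_2)$ from below in two ways:
\begin{enumerate}[label=(\alph*)]
\item a lower bound on $|z|$ from Lemma~\ref{lemPropertiesOfIntersectingCirclesInvolved}~\ref{CirclePropertyIii}, namely $|z| \ge m(r)$, the explicit function of $r$ given there;
\item a lower bound on the height $\mathrm{Im}(z)$, obtained from the geometry of the disc.
\end{enumerate}
For (b), write $F(z_1,z_2)$ with center $c = (z_1+z_2)/2$ and radius $\rho = |z_1-z_2|/2 \le \sqrt{2}$ (Lemma~\ref{lemPropertiesOfIntersectingCirclesBasic}~\ref{CirclePropertyii}). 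Then $\min \mathrm{Im} = \mathrm{Im}(c) - \rho$. Here $\mathrm{Im}(z_1) \ge \sqrt{3}$, and $\mathrm{Im}(z_2)$ is controlled using Lemma~\ref{lemAnglesOfSmallAbsoluteValuesOutsideOmega} together with $|z_2| \ge \sqrt{10-r^2}$ from Lemma~\ref{lemPropertiesOfIntersectingCirclesInvolved}~\ref{CirclePropertyIi}.

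Combining (a) and (b), a point of $F(z_1,z_2) \cap F_-$ would need $m(r)^2 + \tfrac{2}{\sqrt3}\,\mathrm{Im}(z) \le 1$ while $\mathrm{Im}(z) \ge \mathrm{Im}(c)-\rho$. We then check this is impossible over $r \in [\sqrt{3},\sqrt{5}]$. In the extreme case $r = \sqrt{3}$, $z_1 = \sqrt{3}i$, the formula of Lemma~\ref{lemPropertiesOfIntersectingCirclesInvolved}~\ref{CirclePropertyIii} gives $m = \tfrac12(\sqrt2+\sqrt6-\sqrt{12-4\sqrt3})$. Since $12 - 4\sqrt{3} = (\sqrt{6}-\sqrt{2})^2$, this simplifies to $m = \sqrt{2}$, which exceeds $1$. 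So that case is clean.

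As $r$ grows toward $\sqrt{5}$, $m(r)$ drops below $1$; at $r = \sqrt{5}$ it equals $\tfrac12(2+2-\sqrt{4}) = 1$. In fact the bound $m(r) \ge 1$ should hold throughout, via an endpoint/concavity check of $\tfrac12(u+v-\sqrt{12-2uv})$ with $u = \sqrt{r^2-1}$, $v = \sqrt{9-r^2}$, $u^2+v^2 = 8$. Writing $p = u+v \in [\sqrt{8},4]$ gives $uv = (p^2-8)/2$, so the bound is $\tfrac12(p-\sqrt{20-p^2})$, which is $\ge 1$ iff $p \ge 4$. That holds only at the endpoint, so in general $m(r) < 1$ and (a) alone does not suffice.

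The remaining region consists of points $z$ with $m(r) \le |z| \le 1$ and $\mathrm{Im}(z) \ge 0$, and the real work is showing that such $z$ lie outside $F_-$. This is a parametric inequality in $r$ (and $|z|$), which we verify by monotonicity in $r$ using the bounds above. The main obstacle is making the height bound (b) sharp enough to exclude the small lens near $i/\sqrt{3}$. If (b) is too weak, we fall back on using directly that $F(z_1,z_2)$ lies in the rotated half-strip $a\Sigma$ above a line of height roughly $\sqrt{3}$. Its intersection with $\overline{\mathbb D}$ is then a thin cap near the upper boundary of the unit circle, far from the lower circle arc of $F_-$. We would confirm this by comparing angles: the cap's points $e^{i\theta}$ satisfy $\theta$ in a range near $\pi/2$ on which $|e^{i\theta} + i/\sqrt{3}| > 2/\sqrt{3}$, since that inequality holds precisely for $\sin\theta > 0$.
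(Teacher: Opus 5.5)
\textbf{There is a genuine gap: the proposal does not yet prove the lemma.} The decisive estimate is only announced (``which we verify by monotonicity in $r$''), and the tools you name cannot deliver it.

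First, the evaluation at $r=\sqrt3$ is wrong. We have $(\sqrt6-\sqrt2)^2=8-4\sqrt3\neq 12-4\sqrt3$, and in fact $m(\sqrt3)=\tfrac12\bigl(\sqrt2+\sqrt6-\sqrt{12-4\sqrt3}\bigr)\approx 0.806$. Your own substitution $p=u+v$ shows $m(r)<1$ for all $r<\sqrt5$. This is unavoidable: $F(z_1,z_2)$ meets $\overline{\mathbb D}$, so $m(r)\le 1$ in every admissible configuration. Bound (a) therefore never settles any case by itself.

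Everything thus rests on the height bound (b), which you never derive, and the tools you name do not give it. Lemma~\ref{lemAnglesOfSmallAbsoluteValuesOutsideOmega} applies only for $|z_2|\le 3$, but $|z_2|$ can be larger. For example, the extremal pair of Lemma~\ref{lemPropertiesOfIntersectingCirclesBasic}~\ref{CirclePropertyiii}, with $z_1=\sqrt3 i$ and $|z_1-z_2|=2\sqrt2$, has $z_2=\bigl((2+2\sqrt2)+(1+2\sqrt2)i\bigr)/\sqrt3$ and $|z_2|\approx 3.56$. Even for $|z_2|\le 3$, that lemma only gives $\mathrm{Im}(z_2)\ge \tfrac12\sqrt{(|z_2|^2-1)(9-|z_2|^2)}$, which tends to $0$ as $|z_2|\to 3$. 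Combined with $\mathrm{Im}(z_1)\ge\sqrt3$ and $\rho$ up to $\sqrt2$, this does not even yield $\mathrm{Im}(c)-\rho>0$. So the part of $F_-$ in the lower half-plane is not excluded either.

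The fallback is not sound. $F(z_1,z_2)$ need not lie in $a\Sigma$, since its radius reaches $\sqrt2>1$ in the extremal case. Moreover, checking the points $e^{i\theta}$ of the unit circle says nothing about the lens $F_-\cap\{\mathrm{Im}(z)\ge 0\}$, which touches $\mathbb T$ only at $\pm1$. What must be excluded are interior points of $\overline{\mathbb D}$.

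\textbf{The missing idea} is to control the direction $a$ of the common half-strip rather than the position of $z_2$. Lemma~\ref{lemAnglesOfSmallAbsoluteValuesOutsideOmega} applied to $z_1$ (valid, since $|z_1|\le\sqrt5$), together with Lemma~\ref{lemArcsin}~\ref{arcsinii}, gives
$\mathrm{Im}(a)\ge s(r):=\sin\bigl(q(r)-\arcsin(1/r)\bigr)=\bigl((r^2-1)\sqrt{9-r^2}-r^2+3\bigr)/(2r^2)$.

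The paper then separates the two discs by a single line orthogonal to $a$:
\begin{itemize}
\item[] $F(z_1/a,z_2/a)\subset\{\mathrm{Re}(z)\ge\phi(r)\}$ with $\phi=m$, because its center is real and positive and by Lemma~\ref{lemPropertiesOfIntersectingCirclesInvolved}~\ref{CirclePropertyIii};
\item[] $a^{-1}F_-\subset\{\mathrm{Re}(z)\le (2-\mathrm{Im}(a))/\sqrt3\le\psi(r)\}$;
\item[] $\psi<\phi$ on $[\sqrt3,\sqrt5]$, by monotonicity plus two endpoint comparisons.
\end{itemize}

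Your lens observation is correct and could be completed with the same bound. Write the center as $c=ac'$ with $c'=g+\rho$ and $g\ge\phi(r)$. The lowest point of $F(z_1,z_2)$ then has height
$g\,\mathrm{Im}(a)-\rho\,(1-\mathrm{Im}(a))\ge \eta(r):=\phi(r)s(r)-\sqrt2\,(1-s(r))$.
Numerically, $\eta>0$ and $\phi^2+\tfrac{2}{\sqrt3}\eta>1$ on $[\sqrt3,\sqrt5]$ (for instance $\eta\approx 0.40$ at $r=\sqrt3$ and $\eta\approx 0.034$ at $r=\sqrt5$). That would exclude both the lower half of $F_-$ and the lens. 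However, it still needs a rigorous one-variable verification of the same kind as the paper's, and the single separating line is the more economical argument.
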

	\begin{proof}
		Without loss of generality we may assume that $\mathrm{Im}(z_1), \mathrm{Im}(z_2) > 0$ (by Lemma~\ref{lemPropertiesOfIntersectingCirclesInvolved}~\ref{CirclePropertyIi}) and that $|z_1| \le |z_2|$. We shall prove that $F(z_1,z_2)$ does not intersect $F_-$.
		
		Consider $t \in [-\pi, \pi]$ such that $a = e^{it}$ satisfies $z_1/a,z_2/a \in \Sigma$ and $\mathrm{Im}(z_1/a)  = -\mathrm{Im}(z_2/a)$. Now it suffices to prove that the discs $a^{-1}F_-$ and $a^{-1}F(z_1, z_2) = F(z_1/a, z_2/a)$ do not intersect. Since the center of the disc $F(z_1/a, z_2/a)$ is a positive real number, from Lemma~\ref{lemPropertiesOfIntersectingCirclesInvolved}~\ref{CirclePropertyIii} for all $z \in F(z_1/a, z_2/a)$ we have the estimate
		\[
		\mathrm{Re}(z) \ge \phi(r) = \dfrac{1}{2}\left(\sqrt{r^2 - 1} + \sqrt{9 - r^2} - \sqrt{12 -2\sqrt{(r^2 - 1)(9 - r^2)}}\right),
		\]
		where $r = |z_1|$. On the other hand we can calculate the upper bound for $\mathrm{Re}(z)$, if $z \in a^{-1}F_-$. Indeed, 
		\[
		a^{-1}F_- = -\dfrac{a^{-1}}{\sqrt{3}}i + \dfrac{2}{\sqrt{3}}\overline{\mathbb D} = -\dfrac{\overline{a}}{\sqrt{3}}i + \dfrac{2}{\sqrt{3}}\overline{\mathbb D}.\]
		Therefore, if $z \in a^{-1}F_-$, then
		\[
		\mathrm{Re}(z) \le \mathrm{Re}\left(-\dfrac{\overline{a}}{\sqrt{3}}i\right) + \dfrac{2}{\sqrt{3}} = \dfrac{-\mathrm{Im}(a) + 2}{\sqrt{3}}.
		\]
		It remains to estimate $\mathrm{Im}(a)$. We can use the fact that $z_1/a \in \Sigma$. If $t_0 \in [-\pi, \pi]$ is chosen such that $e^{-it_0}z_1$ is a positive real number, then by Lemma~\ref{lemAnglesOfSmallAbsoluteValuesOutsideOmega} $t_0 \in [q(r), \pi - q(r)]$. Then by Lemma~\ref{lemArcsin}~\ref{arcsinii} we find that $a = e^{it_0 + is}$, where $|s| \le \arcsin{1/r}$. From this we infer that $a = e^{it}$, where $t \in [q(r) - \arcsin(1/r), \pi - q(r) + \arcsin(1/r)]$. It is straightforward to verify that for $r \in [\sqrt{3}, \sqrt{5}]$ we have $\arcsin(1/r) < q(r)$. Therefore, we can estimate
		\begin{multline*}
			\mathrm{Im}(a) \ge \sin(q(r) - \arcsin(1/r)) = \sin(q(r))\cos(\arcsin(1/r)) - \cos(q(r))\dfrac{1}{r} = \\ \dfrac{(r^2 - 1)\sqrt{9 - r^2} - r^2 + 3}{2r^2}.
		\end{multline*}
		Finally, it follows that
		\[
		\mathrm{Re}(z) \le \psi(r) = \dfrac{5r^2 - 3 - (r^2 - 1)\sqrt{9 - r^2}}{2r^2\sqrt{3}}
		\]
		for all $z \in a^{-1}F_-$. Thus, to finish the proof it suffices to prove that $\psi(r) < \phi(r)$ for all $r \in [\sqrt{3}, \sqrt{5}]$.
		
		The inequality $\psi(r) < \phi(r)$ for all $r \in [\sqrt{3}, \sqrt{5}]$ can be proved by verifying the following statements.
		\begin{enumerate}
			\item $\phi$ and $\psi$ are nondecreasing on $[\sqrt{3}, \sqrt{5}]$.
			\item $\psi(\sqrt{5}) < \phi(2)$ and $\psi(2) < \phi(\sqrt{3})$.
		\end{enumerate}
		Indeed, if these statements hold, then for $r \in [\sqrt{3}, 2]$ we get that $\psi(r) \le \psi(2) < \phi(\sqrt{3}) \le \phi(r)$. Similarly, for $r \in [2, \sqrt{5}]$ we have $\psi(r) \le \psi(\sqrt{5}) < \phi(2) \le \phi(r)$. The second statement is proved by a trivial calculation, while the monotonicity can be proved by differentiating. We omit these verifications.
	\end{proof}
	
	Due to Lemma~\ref{lemDeformedUnitCircle}, given a distinguished set $E = \{z_1, z_2, z_3\}$ such that $z_1$ and $z_2$ are approximately collinear and $F(z_1, z_2) \cap \overline{\mathbb D} \ne \emptyset$, we have two disjoint closed disks $D_1 = F_{\pm}$ and $D_2 = F(z_1, z_2)$ such that $-1, 1 \in D_1$ and $z_1, z_2 \in D_2$. Thus, it remains to prove that $z_3 \notin \conv(D_1 \cup D_2)$. In fact, we shall prove a stronger statement that $z_3 \notin \conv(F(z_1, z_2) \cup \sqrt{3}\cdot\overline{\mathbb D})$ provided that $|z_1 - z_2| < 2\sqrt{2}$ (note that $F_{\pm} \subset \sqrt{3}\cdot\overline{\mathbb D}$). In view of Lemma~\ref{lemPropertiesOfIntersectingCirclesBasic}~\ref{CirclePropertyiii} the inequality $|z_1 - z_2| < 2\sqrt{2}$ always holds, if $z_1, z_2 \ne \pm \sqrt{3}i$.
	
	At first we need the following general statement about coverings by convex sets.
	
	\begin{lemma}\label{lemConvexCovering}
		Assume that $K \subset \RR^n$ is a compact set and that $\{C_\alpha\}_{\alpha \in I}$ is an arbitrary family of convex subsets of $\RR^n$. Assume that $\partial K \subset \bigcup_{\alpha \in I} C_\alpha$ and that $\bigcap_{\alpha \in I} C_\alpha \ne \emptyset$. Then $K \subset \bigcup_{\alpha \in I} C_\alpha$.		
	\end{lemma}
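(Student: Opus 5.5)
The plan is a ray-casting argument from a common point of all the sets. Fix a point $p \in \bigcap_{\alpha \in I} C_\alpha$, and let $x \in K$ be arbitrary. We want to show $x \in \bigcup_{\alpha} C_\alpha$. If $x = p$ there is nothing to prove, so assume $x \ne p$.

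If $x \in \partial K$, the hypothesis gives the claim directly. Otherwise $x$ is an interior point of $K$, and we consider the ray
\[
\ell = \{x + s(x - p) : s \ge 0\},
\]
which starts at $x$ and points away from $p$. Since $K$ is compact, the set $\{s \ge 0 : x + s(x-p) \in K\}$ is a compact subset of $[0,\infty)$ containing $0$. Let $s_0$ be its maximum and put $y = x + s_0(x-p)$. Then $y \in K$, and points $x + s(x-p)$ with $s > s_0$, arbitrarily close to $y$, lie outside $K$. Hence $y \in \partial K$.

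By hypothesis $y \in C_\alpha$ for some $\alpha \in I$, and also $p \in C_\alpha$. The point $x$ lies on the segment $[p, y]$, because
\[
x = \frac{1}{1+s_0}\, y + \frac{s_0}{1+s_0}\, p,
\]
a convex combination with nonnegative coefficients summing to one. Convexity of $C_\alpha$ then gives $x \in C_\alpha$, which completes the argument.

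There is no real obstacle here. The only care needed is in the boundary step: one must justify that $y \in \partial K$ using closedness of $K$ together with maximality of $s_0$, and handle the degenerate cases $x = p$ and $s_0 = 0$. In the case $s_0 = 0$ we have $y = x$, so $x$ is itself a boundary point and the hypothesis applies directly.
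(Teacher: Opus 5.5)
Your proof is correct and is essentially the paper's argument: the paper also fixes a point $a$ in the common intersection and follows the ray $t \mapsto (1-t)a + tz$ beyond $z$ until it meets $\partial K$ at some $t_0 > 1$. It then concludes by convexity of the set $C_{\alpha_0}$ containing both that boundary point and $a$. Your explicit handling of the degenerate case $x = p$, which the paper passes over, and your justification of the boundary point via the maximum of a compact parameter set are small refinements of the same idea.
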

	\begin{proof}
		We need to prove that for all $z \in \mathrm{Int}(K)$ we have $z \in \bigcup_{\alpha \in I} C_\alpha$. Let $a \in \bigcap_{\alpha \in I} C_\alpha$ and consider $z(t) = (1-t)a + tz$ for $t \in \RR$. Clearly, $z(1) = z \in \mathrm{Int}(K)$ and $z(t) \notin K$ for large $t$, so there exists $t_0 > 1$ such that $z(t_0) \in \partial K$. Therefore, there is $\alpha_0 \in I$ such that $z(t_0) \in C_{\alpha_0}$. Since $a = z(0) \in C_{\alpha_0}$ and $t_0 > 1$, by convexity we conclude that $z = z(1) \in C_{\alpha_0}$.
	\end{proof}
	
	\begin{lemma}\label{lemNotInConvIfIntersects}
		Let $z_1, z_2 \in \CC$ satisfy the inequalities $|z_1|, |z_2| \ge \sqrt{3}$ and $2 \le |z_1 - z_2| < 2\sqrt{2}$. Moreover, assume that $F(z_1,z_2) \cap \overline{\mathbb D} \ne \emptyset$. Then \[\conv(F(z_1,z_2) \cup \sqrt{3}\cdot\overline{\mathbb D}) \subset \sqrt{5}\mathbb D \cup (z_1 + 2\mathbb D) \cup (z_2 + 2\mathbb D).\]
	\end{lemma}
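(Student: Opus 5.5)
Set $K = \conv(F(z_1,z_2) \cup \sqrt{3}\cdot\overline{\mathbb D})$ and $U = \sqrt{5}\mathbb D \cup (z_1 + 2\mathbb D) \cup (z_2 + 2\mathbb D)$. The plan is to apply Lemma~\ref{lemConvexCovering} with the three convex sets $\sqrt{5}\mathbb D$, $z_1 + 2\mathbb D$, $z_2 + 2\mathbb D$. This requires two things: that the three sets have a common point, and that $\partial K \subset U$.

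\emph{Step 1: a common point.} The natural candidate is a point of $F(z_1,z_2) \cap \overline{\mathbb D}$, which is nonempty by assumption. Write $c = (z_1+z_2)/2$ and $\rho = |z_1 - z_2|/2 < \sqrt{2}$. Any $w \in F(z_1,z_2)$ satisfies $|w - z_j| \le |w - c| + \rho \le 2\rho < 2\sqrt{2}$, which is not good enough. Better, for $w \in F(z_1,z_2)$ we have $|w - z_1|^2 + |w - z_2|^2 \le |z_1 - z_2|^2 < 8$ (angle at $w$ is at least right), but this still does not force each term below $4$.

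So instead I would use the midpoint-type point $p = $ the point of $\overline{\mathbb D}$ nearest to $c$, or simply $p = c\cdot\min\{1, 1/|c|\}$. I would then check $|p - z_j| < 2$ using $|c - z_j| = \rho < \sqrt{2}$ and the fact that $F(z_1,z_2)$ meets $\overline{\mathbb D}$, so $|c| \le 1 + \rho$. Since $p$ lies on the segment from $c$ toward $0$, at distance at most $\rho$ from $c$, the triangle with the right angle at $c$ (as $z_j - c \perp$ is not guaranteed) needs care. Concretely, I would write $p = c - s\,c/|c|$ with $0 \le s \le \rho$ and bound $|p - z_j|^2 = \rho^2 + s^2 - 2s\,\mathrm{Re}\bigl((z_j - c)\overline{c}/|c|\bigr)$. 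The sign of the cross term differs for $j=1,2$, so I also need $|z_j| \ge \sqrt{3}$ to control it. This estimate is the first delicate computation. If it fails, a fallback is to take a suitable point of $F(z_1,z_2) \cap \overline{\mathbb D}$ on the bisector of $z_1 z_2$.

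\emph{Step 2: covering the boundary.} By Lemma~\ref{lemConvexHullDescription}~\ref{convexHullii}, $\partial K$ is contained in $F(z_1,z_2) \cup \sqrt{3}\overline{\mathbb D} \cup T$, where $T$ consists of the two common tangent segments. We have $\sqrt{3}\overline{\mathbb D} \subset \sqrt{5}\mathbb D$. Also $F(z_1,z_2) \subset (z_1 + 2\mathbb D) \cup (z_2 + 2\mathbb D)$: for $w \in F(z_1,z_2)$ one of $|w - z_1|, |w - z_2|$ is at most $\sqrt{(|w-z_1|^2 + |w-z_2|^2)/2} < 2$. It therefore remains to cover the tangent segments. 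Each is a segment from a point $u \in \partial(\sqrt{3}\mathbb D)$ to a point $u' \in \partial F(z_1,z_2)$. Since $\sqrt{5}\mathbb D$ and $z_j + 2\mathbb D$ are convex, it suffices to split each segment into a part lying in $\sqrt{5}\mathbb D$ and a part lying in a single $z_j + 2\mathbb D$. I would parametrize the segment via Lemma~\ref{lemConvexHullDescription}, as $c(t) + r(t)a$, and show that where it exits $\sqrt{5}\mathbb D$ it is already inside the disc of radius $2$ around the nearer endpoint $z_j$.

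\textbf{Main obstacle.} The hard step is this last one. It amounts to an inequality in the parameters $\alpha,\beta,\delta$ after normalizing, as in Lemma~\ref{lemEqualImagPartsInStrip}, to $z_1 = \alpha + i\delta$, $z_2 = \beta - i\delta$. The constraints are $\sqrt{2} \le \alpha \le \beta \le \alpha + 2$, $g(\alpha,\beta,\delta) \le 1$, and $|\delta| \le 1$. I would first try to reduce it, via monotonicity in the style of Lemma~\ref{lemPropertiesOfIntersectingCirclesBasic}, to extremal configurations with $|\delta| = 1$ and $g = 1$. I would then verify the resulting one-variable inequality by calculus, as the paper does elsewhere.
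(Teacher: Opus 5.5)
Your architecture matches the paper's: Lemma~\ref{lemConvexCovering} with the three convex sets, then Lemma~\ref{lemConvexHullDescription}~\ref{convexHullii} to reduce $\partial K$ to $F(z_1,z_2)$, $\sqrt3\,\overline{\mathbb D}$ and two tangent segments, then splitting each tangent segment into a piece in $\sqrt5\mathbb D$ and a piece in one disc $z_j+2\mathbb D$. Your covering of $F(z_1,z_2)$ is also correct. However, Step~1 fails as proposed.

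Take, after a rotation, $z_1=\sqrt2+i$ and $z_2=2+\sqrt2-\varepsilon-i$ with small $\varepsilon>0$. Then:
\begin{itemize}
\item $|z_1|=\sqrt3<|z_2|$;
\item $|z_1-z_2|^2=(2-\varepsilon)^2+4\in[4,8)$;
\item $c=1+\sqrt2-\varepsilon/2$ and $c-\rho<1$.
\end{itemize}
So every hypothesis holds, and both points even lie in $\Sigma$. But the distance from $z_2$ to $\overline{\mathbb D}$ is $|z_2|-1>2$. Hence no point of $\overline{\mathbb D}$ lies in $z_2+2\mathbb D$: neither your $p=1$ nor any point of $F(z_1,z_2)\cap\overline{\mathbb D}$ from the fallback. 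No analysis of the cross term can repair this.

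The common point must be sought farther out, using that $\sqrt5\mathbb D$ is larger. The paper takes $s=\min\{|c|,2\}\,c/|c|$; note $c\neq0$, since otherwise $|z_j|=\rho<\sqrt3$. Since $|c|\le1+\rho<1+\sqrt2$, one gets $|s-c|<\sqrt2-1$. Hence $|s-z_j|<2\sqrt2-1<2$, and also $|s|\le2<\sqrt5$.

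The second gap is that the tangent-segment estimate, which is the real content of the lemma, is only announced, and the announced route is not available. The lemma assumes neither approximate collinearity nor $z_j\notin\Omega$. So you cannot normalize via Lemma~\ref{lemEqualImagPartsInStrip}, nor import $|\delta|\le1$, $\alpha\ge\sqrt2$, $\beta\le\alpha+2$ from Lemma~\ref{lemPropertiesOfIntersectingCirclesBasic}. For instance, $z_{1,2}=2\pm1.3i$ satisfies every hypothesis, yet with $c$ on the positive axis it has $|\delta|=1.3$.

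Nor do you argue that the relevant distances are monotone in $\delta$ and $g$, which the reduction to $|\delta|=1$, $g=1$ would require. In fact, for fixed $c$ and $\rho$, the worst position of $z_1$ is an endpoint of its admissible arc on $c+\rho\mathbb T$, where $|z_1|=\sqrt3$ or $|z_2|=\sqrt3$.

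The paper gets through this step as follows, with $r=\rho$ and $c>0$.
\begin{enumerate}
\item The upper tangent segment is $p_{c,r}(t)=ct+(rt+\sqrt3(1-t))a$, where $|a|=1$ and $\mathrm{Re}(a)=(\sqrt3-r)/c$.
\item The identity $|p_{c,r}(t)|^2=3+t^2\bigl(c^2-(\sqrt3-r)^2\bigr)$ gives the exit parameter $t_0$ from $\sqrt5\mathbb D$ explicitly.
\item By convexity, only $p_{c,r}(t_0)$ and $p_{c,r}(1)$ need to lie in $z_1+2\mathbb D$.
\item The distance from a point of the upper half-plane to a point moving on an upper arc of a circle centred on $\mathbb R$ is maximized at an endpoint. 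So $z_1$ can be replaced by the two endpoints $y_\pm$ of its arc.
\item On the region $1\le r<\sqrt2$, $c\le r+1$, $c^2-(\sqrt3-r)^2>2$, both $\arg a$ and $\arccos\bigl((r^2+c^2-3)/(2cr)\bigr)$ lie in $[\pi/4,\pi/2]$. This gives $|p_{c,r}(1)-y_\pm|<2$.
\item The distances $|p_{c,r}(t_0)-y_\pm|$ follow from the same endpoint principle, convexity, and a triangle inequality.
\end{enumerate}
Some explicit reduction of this kind is what your Step~2 is missing.
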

	The number $\sqrt{5}$ does not play any important role in the proof and, definitely, can be slightly decreased, if one uses optimal estimates. Nevertheless, the radius $\sqrt{5}$ is sufficient for our purposes. Also, the statement of this lemma is illustrated in Fig.~\ref{figConvCovering}.
	\FloatBarrier
	\begin{figure}
		\begin{tikzpicture}
			\draw (0,0) circle[radius = {sqrt(3)}];
			\draw[dashed] (0,0) circle[radius = {sqrt(5)}];
			\draw (2, 0) circle[radius = 1.4];
			\filldraw ({2 + sqrt(1 - 0.81)*1.4}, {0.9*1.4}) circle[radius = 0.05] node[above right] {$z_1$};
			\filldraw ({2 - sqrt(1 - 0.81)*1.4}, {-0.9*1.4}) circle[radius = 0.05] node[above right] {$z_2$};
			\draw[dashed] ({2 + sqrt(1 - 0.81)*1.4}, {0.9*1.4}) circle[radius = 2];
			\draw[dashed] ({2 - sqrt(1 - 0.81)*1.4}, {-0.9*1.4}) circle[radius = 2];
			\draw ({sqrt(3)*(sqrt(3) - 1.4)/2}, {sqrt(3)*sqrt(1 - (sqrt(3) - 1.4)*(sqrt(3) - 1.4)/4)}) -- ({2 + 1.4*(sqrt(3) - 1.4)/2}, {1.4*sqrt(1 - (sqrt(3) - 1.4)*(sqrt(3) - 1.4)/4)});
			\draw ({sqrt(3)*(sqrt(3) - 1.4)/2}, {-sqrt(3)*sqrt(1 - (sqrt(3) - 1.4)*(sqrt(3) - 1.4)/4)}) -- ({2 + 1.4*(sqrt(3) - 1.4)/2}, {-1.4*sqrt(1 - (sqrt(3) - 1.4)*(sqrt(3) - 1.4)/4)});
			\node at (-0.5, 1.95) {$\sqrt{5}\DD$};
			\node at (-0.9, 0.7) {$\sqrt{3}\cdot\overline{\DD}$};
			\node at (2.9, 0.6) {$F$};
			\node at (3, 2.7) {$z_1 + 2\DD$};
			\node at (2, -2.7) {$z_2 + 2\DD$};
		\end{tikzpicture}
		\caption{The illustration of the statement of Lemma~\ref{lemNotInConvIfIntersects} (where we denoted $F(z_1,z_2)$ as $F$ for convenience).}
		\label{figConvCovering}
	\end{figure}
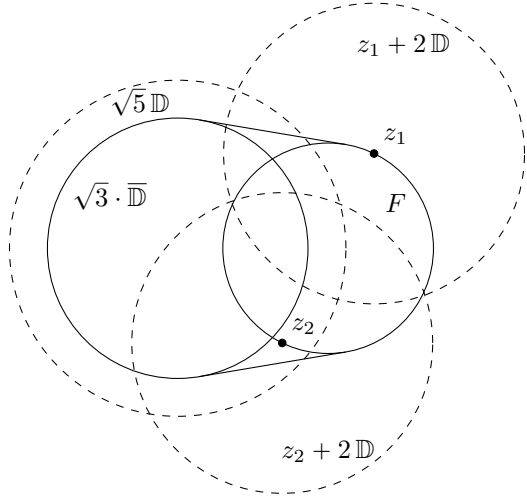
	\begin{proof}
		If $z_1 + z_2 = 0$, then the statement is trivial, since in this case $F(z_1,z_2) \subset \sqrt{2}\cdot \overline{\mathbb D}$. Thus, we may assume that $c = (z_1 + z_2/2$ is a positive real number. If $r = |z_1 - z_2|/2$, then $r < \sqrt{2}$, $F(z_1,z_2) = c + r\overline{\mathbb D}$, and $c \le r + 1$ (since $F(z_1, z_2) \cap \overline{\mathbb D} \ne \emptyset$). It follows that $c \le \sqrt{2} + 1$. We claim that $s = \min\{c, 2\}$ belongs to $\sqrt{5}\mathbb D \cap (z_1 + 2\mathbb {D}) \cap (z_2 + 2\mathbb D)$. Indeed, $0 < s \le 2 < \sqrt{5}$, so $s \in \sqrt{5}\mathbb D$. Moreover, $|s - z_1| \le |s - c| + |c - z_1| < (\sqrt{2} + 1 - 2) + \sqrt{2} = 2\sqrt{2} - 1 < 2$. Similarly $|s - z_2| < 2$, so the statement is proved. Thus, $\sqrt{5}\mathbb D \cap (z_1 + 2\mathbb {D}) \cap (z_2 + 2\mathbb D) \ne \emptyset$, so by Lemma~\ref{lemConvexCovering} it suffices to prove that
		\[
		\partial \conv(F(z_1,z_2) \cup \sqrt{3}\cdot\overline{\mathbb D}) \subset \sqrt{5}\mathbb D \cup (z_1 + 2\mathbb D) \cup (z_2 + 2\mathbb D).
		\]
		
		Before proceeding further we find parametric representations for $z_1$ and $z_2$, and in the meantime obtain some restrictions on $r$ and $c$. Without loss of generality we may assume that $\mathrm{Im}(z_1) \ge 0$, so $\mathrm{Im}(z_2) \le 0$. Thus, there is $\gamma \in [0,\pi]$ such that $z_1 = c + re^{i\gamma}$. From the inequality $|z_1| \ge \sqrt{3}$ we infer that
		\[
		\cos(\gamma) \ge \dfrac{3 - r^2 - c^2}{2cr}.
		\]
		Moreover, since $z_2 = c - re^{i\alpha}$ and $|z_2| \ge \sqrt{3}$, we have the inequality
		\[
		\cos(\gamma) \le \dfrac{r^2 + c^2 - 3}{2cr}.
		\]
		Thus, 
		\[
		|\cos(\gamma)| \le \dfrac{r^2 + c^2 - 3}{2cr},
		\]
		and, in particular, we conclude that $r^2 + c^2 \ge 3$. 
		
		Now we introduce $a(c,r) \in \CC$ such that $|a(c,r)| = 1$, $\mathrm{Im}(a(c,r)) > 0$, and $\mathrm{Re}(a(c,r)) = (\sqrt{3} - r)/c$ (such $a$ exists because $c^2 \ge 3 - r^2 > \sqrt{3} - r$). Moreover, if $|b| = 1$ and $\mathrm{Re}(b) = (\sqrt{3} - r)/c$, then either $b = a(c,r)$, or $b = \overline{a(c,r)}$. Hence, by Lemma~\ref{lemConvexHullDescription} we have the inclusion
		\[
		\partial \conv(F(z_1,z_2) \subset \sqrt{3}\cdot \overline{D} \cup F(z_1, z_2) \cup \{p_{c,r}(t):t \in (0,1)]\} \cup \{\overline{p(t)}: t \in (0,1)]\},
		\]
		where $p_{c,r}(t) = ct + (rt + \sqrt{3}(1 - t))a(c,r)$. Clearly, $\sqrt{3}\cdot \overline{\mathbb D} \subset \sqrt{5}\mathbb D$ and $F(z_1, z_2) \subset (z_1 + 2\mathbb D) \cup (z_2 + 2\mathbb D)$ (the latter inclusion holds, because the radius $r$ of $F(z_1,z_2)$ is strictly less that $\sqrt{2}$). Thus, the problem reduces to the inclusion
		\begin{equation}\label{eqBoundaryReduction}
			\left\{p_{c,r}(t):t \in (0,1)]\right\} \cup \left\{\overline{p(t)}: t \in [0,1]\right\} \subset \sqrt{5}\mathbb D \cup (z_1 + 2\mathbb D) \cup (z_2 + 2\mathbb D).
		\end{equation}
		
		Now it suffices to prove the following statement. Given any $\alpha \in [0, \pi]$ such that $|\cos(\alpha)| \le (c^2 + r^3 - 3)/(2cr)$ we have the inclusion
		\begin{equation}\label{eqBoundaryReductionFurther}
			\left\{p_{c,r}(t):t \in (0,1)]\right\} \subset \sqrt{5}\mathbb D \cup (c + re^{i\alpha} + 2\mathbb D).
		\end{equation}
		Indeed, if this inclusion holds, then by taking conjugation we conclude that
		\[
		\left\{\overline{p_{c,r}(t)}:t \in (0,1)]\right\} \subset \sqrt{5}\mathbb D \cup (c - re^{i\alpha} + 2\mathbb D)
		\]
		for any $\alpha \in [0, \pi]$ such that $|\cos(\alpha)| \le (c^2 + r^3 - 3)/(2cr)$. Thus,~\eqref{eqBoundaryReduction} would follow from these inclusion with $\alpha = \gamma$.
		
		By a straightforward calculation it is easy to obtain the formula
		\[
		|p_{c,r}(t)| = \sqrt{3 + t^2\left(c^2 - \left(\sqrt{3} - r\right)^2\right)}.
		\]
		Thus, $t \mapsto |p_{c,r}(t)|$ is strictly increasing on the ray $[0, +\infty)$ and, in particular, for 
		\[
		t_0(c,r) = \dfrac{\sqrt{2}}{\sqrt{c^2 - \left(\sqrt{3} - r\right)^2}}
		\]
		we have the inequalities $|p_{c,r}(t)| < \sqrt{5}$ if $t \in [0, t_0(c,r))$ and $|p_{c,r}(t)| > \sqrt{5}$ if $t > t_0(c, r)$. Thus, we have the inclusion
		\[
		\left\{\overline{p_{c,r}(t)}:t \in (0,t_0(c,r))]\right\} \subset  \sqrt{5}\mathbb D.
		\]
		In particular, if $t_0(c,r) \ge 1$, then~\eqref{eqBoundaryReductionFurther} holds. Thus, we may further assume that $t_0(c,r) < 1$, i.e. $c^2 - (\sqrt{3} - r)^2 > 2$, and the problem reduces to proving the inclusion
		\begin{equation}\label{eqBoundaryReductionEvenFurther}
			\left\{\overline{p_{c,r}(t)}:t \in (t_0(c, r),1)]\right\} \subset (c - re^{i\alpha} + 2\mathbb D)
		\end{equation}
		for all $\alpha \in [0, \pi]$ such that $|\cos(\alpha)| \le (c^2 + r^3 - 3)/(2cr)$. Moreover, it suffices to consider the parameters $c$ and $r$ that satisfy the inequalities
		\begin{equation}\label{eqRestrictionsForRC}
			1 \le r < \sqrt{2},\;\;0 < c \le r + 1,\;\;c^2 - \left(\sqrt{3} - r\right)^2 > 2.
		\end{equation}
		It should be noted that the inequality $c^2 + r^2 \ge 3$, that we proved earlier, easily follows from~\eqref{eqRestrictionsForRC}, as $r \ge 1$ and $c^2 > 2$ (from the third inequality).
		
		Now we need the following {\it{auxiliary statement}}. Assume that $z \in \CC$ and $\mathrm{Re}(z) \ge 0$. Also let $0 \le \phi \le \psi \le \pi$, $x \in \RR$, and $\rho > 0$. Then
		\[
		\max\limits_{\alpha \in [\phi, \psi]} |z - x - re^{it}| = \max\left\{|z - x - re^{i\phi}|, |z - x - re^{i\psi}|\right\}.
		\]
		That, is we are given an arc $\Gamma = \{x + re^{it}: t \in [\phi, \psi]\}$ of the circle $x + r\mathbb T$ such that $\Gamma$ is contained in the upper-half plane, and a point $z$ that also lies in the upper half-plane. Then the maximal distance from $z$ to a point $y \in \Gamma$ is attained at an endpoint of $\Gamma$. This statement is trivial if $z = x$, and otherwise can be proved by an elementary calculation that shows that there is no local maxima of the function $t \mapsto |z - x - re^{it}|$ on $(\phi, \psi)$, if $(\phi, \psi) \subset [0, \pi]$.
		
		From the previous paragraph we can further reduce our problem. That is, let $w(c,r) = \arccos((r^2 + c^2 - 3)/(2cr))$ and consider $y_+(c,r) = c + re^{iw(c,r)}$ and $y_-(c,r) = c + re^{i(\pi -w(c,r))}$. The complex numbers $y_+$ and $y_-$ are the endpoints of the arc
		\[
		\left\{z = c + re^{i\alpha}: \alpha \in [0, \pi], |\cos(\alpha)| \le \dfrac{r^2 + c^2 - 3}{2cr} \right\}.
		\]
		With these definitions it is easy to see that~\eqref{eqBoundaryReductionEvenFurther} holds (for all $\alpha$ in the indicated interval) if the inequalities
		\begin{equation}\label{eqReductionToFourDistances}
			|p_{c,r}(t_0(c,r)) - y_{\pm}(c,r)| < 2, \;\;|p_{c,r}(1) - y_{\pm}(c,r)| < 2
		\end{equation}
		hold for all $c$ and $r$ that satisfy~\eqref{eqRestrictionsForRC}.
		
		The verification of~\eqref{eqReductionToFourDistances} is elementary and is based on several estimations that can be obtained by finding extrema of functions of variables $(c,r)$ in the region defined in~\eqref{eqRestrictionsForRC}. However, the proof is rather long, so we shall omit the calculations of derivatives and determination of their signs; in what follows we shall only indicate the type of monotonicity of the corresponding functions.
		
		Let us denote by $P$ the set of all $(c,r)$ such that~\eqref{eqRestrictionsForRC} holds. It is easy to see that $\mathrm{Re}(a(c,r)) = (\sqrt{3} - r)/c$ decreases with respect to $c$. Thus, $\mathrm{Re}(a(c,r)) \ge \mathrm{Re}(a(r+1,r)) = (\sqrt{3} - r)/(r + 1)$. This function also decreases with respect to $r$, so $\mathrm{Re}(a(c,r)) \ge \mathrm{Re}(a(\sqrt{2} + 1, \sqrt{2})) = (\sqrt{3} - \sqrt{2})/(\sqrt{2} + 1)$. Now we can similarly estimate that, since $c \ge 3 - r^2$ and $r \ge 1$ for $(c,r) \in P$ we have
		\[
		\mathrm{Re}(a(c,r)) \le \dfrac{\sqrt{3} - r}{3 - r^2} = \dfrac{1}{\sqrt{3} + r} \le \dfrac{1}{1 + \sqrt{3}}.
		\]
		Thus, we conclude that 
		\[
		\dfrac{\sqrt{3} - \sqrt{2}}{\sqrt{2} + 1} \le \mathrm{Re}(a(c,r)) \le \dfrac{1}{1 + \sqrt{3}},\text{ for all }(c,r) \in P.
		\]
		Now let $h(c,r) = (r^2 + c^2 - 3)/(2cr)$ (so $w(c,r) = \arccos(h(c,r))$). We can similarly find the bounds on $h$ in the region $P$. It is straightforward to verify that $h$ increases with respect to $c$ in $P$, so $h(c,r) \le h(r+1,r) = 1 - 1/(r(r+1))$. Thus function, clearly, increases with $r$, so $h(c,r) \le h(\sqrt{2} + 1, \sqrt{2}) = 1/\sqrt{2}$. To find the lower bound, at first note that $h(c,r) \ge h\left(\sqrt{2 + (\sqrt{3} - r)^2}, r\right)$, since $h$ is increasing with respect to $c$. The resulting function can be proved to be increasing with respect to $r$, so
		\[
		h(c,r) \ge h\left(\sqrt{2 + (\sqrt{3} - 1)^2}, 1\right) = \dfrac{1 - 3 + 2 + (\sqrt{3} - 1)^2}{2\sqrt{2 + (\sqrt{3} - 1)^2}} = \dfrac{2 - \sqrt{3}}{\sqrt{6 - 2\sqrt{3}}}.
		\]
		In fact, we shall settle for a less sharp bounds for $h(c,r)$ and $\mathrm{Re}(a(c,r))$. That is, the inequalities
		\[
		\dfrac{\sqrt{3} - \sqrt{2}}{\sqrt{2} + 1} \le \dfrac{2 - \sqrt{3}}{\sqrt{6 - 2\sqrt{3}}} \le \dfrac{1}{1 + \sqrt{3}} \le \dfrac{1}{\sqrt{2}}
		\]
		and the foregoing bounds imply that
		\begin{equation}\label{eqBoundsForAandH}
			\mathrm{Re}(a(c,r)), h(c,r) \in \left[\dfrac{\sqrt{3} - \sqrt{2}}{\sqrt{2} + 1}, \dfrac{1}{\sqrt{2}}\right]
		\end{equation}
		
		Now we can easily prove the second pair of inequalities in~\eqref{eqReductionToFourDistances}. Indeed, if $\alpha, \beta \in [\pi/4, 3\pi/4]$, then 
		\begin{equation}\label{eqDistanceOnCircleRightAngle}
			|re^{i\alpha} - re^{i\beta}| = r|e^{i(\alpha - \beta)} - 1| = r\sqrt{(1 - \cos(\alpha - \beta)^2 + \sin(\alpha - \beta)^2)} \le r\sqrt{2} < 2,
		\end{equation}
		since $|\alpha - \beta| \le \pi/2$ (and, therefore, $\cos(\alpha - \beta) \ge 0$). Now from~\eqref{eqBoundsForAandH} it follows that $w(c,r) \in [\pi/4, \pi/2]$. Similarly, by~\eqref{eqBoundsForAandH}, $a(c,r)$ can be written as $a(c,r) = e^{i\alpha(c,r)}$, where $\alpha(c,r) \in [\pi/4, \pi/2]$. Thus, from~\eqref{eqDistanceOnCircleRightAngle} it follows that 
		\[
		|y_+(c,r) - p_{c,r}(1)| = |c + re^{iw(c,r)} - c - ra| = |re^{iw(c,r)} - re^{i\alpha(r,c)}| < 2.
		\]
		Similar inequality holds for $y_-(c,r)$, since $y_-(c,r) = c + re^{i(\pi - w(c,r))}$ and $\pi - w(c,r) \in [\pi/2, 3\pi/4]$.
		
		Now we prove that $|y_-(c,r) - p_{c,r}(t_0(c,r))| < 2$ for all $(c,r) \in P$. It is clear that $|y_-(c,r)| = \sqrt{3}$, so we can write $y_-(c,r) = \sqrt{3}b(c,r)$, where $|b| = 1$. By a straightforward calculation we have
		\[
		\mathrm{Re}(b(c,r)) = \dfrac{1}{\sqrt{3}} \mathrm{Re}(y_-) = \dfrac{3 + c^2 - r^2}{2\sqrt{3}c}.
		\]
		From this formula it is easy to verify that $\mathrm{Re}(a(c,r)) \ge \mathrm{Re}(b(c,r))$ for all $c,r \in P$. Thus, if $b = e^{i\beta(c,r)}$, where $\beta(c,r) \in [0, \pi]$, then we have the inequalities $0 \le \beta(c,r) \le \alpha(c,r) \le \pi/2$. Therefore, $h_{c,r}(0) = \sqrt{3}a(c,r)$ lies on the arc $\{z = \sqrt{3}e^{it}: t \in [\beta(c,r), \pi/2]\}$, so by auxiliary statement we can prove that $|y_-(c,r) - h_{c,r}(0)| < 2$, if we prove that $|y_-(c,r) - \sqrt{3}i| < 2$ (note that the point $y_-(c,r)$ coincides with the other endpoint of the arc). Also, it is clear that $|y_-(c,r) - \sqrt{3}i|$ is maximal when $\mathrm{Re}(b(c,r))$ is maximal. It is easy to verify that $\mathrm{Re}(b(c,r))$ increases with respect to $c$, so $\mathrm{Re}(b(c,r)) \le \mathrm{Re}(b(r+1,r))$. The resulting function decreases with $r$, so 
		\[
		\mathrm{Re}(b(c,r)) \le \mathrm{Re}(b(2,1)) = \dfrac{3 + 4 - 1}{2\sqrt{3}2} = \dfrac{\sqrt{3}}{2}.
		\]
		Therefore, 
		\[
		|\sqrt{3}i - y_{-}(c,r)| \le \left|\sqrt{3}i - \sqrt{3}\left(\dfrac{\sqrt{3}}{2} + \dfrac{i}{2}\right)\right| = \left|\dfrac{3}{2} + \dfrac{\sqrt{3}}{2}i\right| = \sqrt{\dfrac{9}{4} + \dfrac{3}{4}} = \sqrt{3} < 2.
		\]
		Thus, we have verified that $|p_{c,r}(0) - y_-(c,r)| < 2$ for all $(c,r) \in P$. Since we have already established that $|p_{c,r}(1) - y_-(c,r)| < 2$, we have the same inequality for all intermediate points, in particular, $|p_{c,r}(t_0(c,r)) - y_-(c,r)| < 2$.
		
		It remains to prove the inequality $|p_{c,r}(t_0(c,r)) - y_+(c,r)| < 2$ for all $(c,r) \in P$. Fortunately, it suffices to find sufficiently good bounds for $|p_{c,r}(t_0(c,r)) - p_{c,r}(1)|$ and $|p_{c,r}(1) - y_+(c,r)|$ and combine them with the triangle inequality. To calculate the first bound note that
		\[
		|p_{c,r}(t) - p_{c,r}(s)| = |t - s| \sqrt{c^2 - (\sqrt{3} - r)^2}
		\]
		for all $t, r \in \RR$. In particular,
		\[
		|p_{c,r}(t_0(c,r)) - p_{c,r}(1)| = |t_0(c,r) - 1|\sqrt{c^2 - (\sqrt{3} - r)^2} = \sqrt{c^2 - (\sqrt{3} - r)^2} - \sqrt{2}.
		\]
		The last expression, clearly increases with respect to both $c$ and $r$, so 
		\begin{multline}\label{eqBoundForPartOfTangentLine}
			|p_{c,r}(t_0(c,r)) - p_{c,r}(1)| \le |p_{\sqrt{2} + 1,\sqrt{2}}(t_0(\sqrt{2} + 1,\sqrt{2})) - p_{\sqrt{2} + 1,\sqrt{2}}(1)|  = \\ \sqrt{2}\left(\sqrt{\sqrt{2} + \sqrt{6} - 1} - 1\right).
		\end{multline}
		To find a good bound for $|p_{c,r}(1) - y_+(c,r)|$ we need to only slightly modify the previous argument (that gave $2$ as a bound). From~\eqref{eqBoundsForAandH}, it follows that 
		\begin{multline*}
			|\alpha(c,r) - w(c,r)| \le \arccos\left(\dfrac{\sqrt{3} - \sqrt{2}}{\sqrt{2} +1}\right) - \dfrac{\pi}{4} = \\ \arccos\left(\dfrac{\sqrt{3} - \sqrt{2} + \sqrt{2}\sqrt{\sqrt{6} + \sqrt{2} - 1}}{\sqrt{2}(\sqrt{2} + 1)}\right)
		\end{multline*}
		From this we can conclude that
		\begin{multline*}
			|p_{c,r}(1) - y_+(c,r)| = r|e^{i\alpha(c,r)} - e^{iw(c,r)}| = r |e^{i(\alpha(c,r) - w(c,r))} - 1| = \\ r\sqrt{2 - 2\cos(\alpha(c,r) - w(c,r))} \le
			2\sqrt{1 - \dfrac{\sqrt{3} - \sqrt{2} + \sqrt{2}\sqrt{\sqrt{6} + \sqrt{2} - 1}}{\sqrt{2}(\sqrt{2} + 1)}}
		\end{multline*}
		Now it remains to note that
		\[
		\sqrt{2}\left(\sqrt{\sqrt{2} + \sqrt{6} - 1} - 1\right) < 1,\;\;2\sqrt{1 - \dfrac{\sqrt{3} - \sqrt{2} + \sqrt{2}\sqrt{\sqrt{6} + \sqrt{2} - 1}}{\sqrt{2}(\sqrt{2} + 1)}} < 1,
		\]
		so
		\[
		|y_+(c,r) - p_{c,r}(t_0(c,r))| \le |p_{c,r}(t_0(c,r)) - p_{c,r}(1)| + |p_{c,r}(1) - y_+(c,r)|< 1 + 1 = 2.
		\]
	\end{proof}
	
	\begin{proposition}\label{propIfNotSqrt3ThenSplittable}
		Assume that $E$ is a distinguished set, that does not contain $\pm \sqrt{3}i$. Then $\{-1, 1, \infty\} \cup E$ is splittable.
	\end{proposition}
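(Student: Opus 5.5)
The plan is to reduce to the case left open by Section~\ref{sec:Diameters}. If $E$ is well-separated from zero, Proposition~\ref{propIfWellSeparatedThenSplittable} already gives splittability. So assume $E$ contains an approximately collinear pair $z_1,z_2$ with $F(z_1,z_2)\cap\overline{\mathbb D}\neq\emptyset$. Let $z_3$ be the remaining point, and enumerate so that $|z_1|\le|z_2|$.

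Since $E$ is distinguished, $z_1,z_2\notin\Omega$ and $|z_1-z_2|\ge 2$. Lemma~\ref{lemDeformedUnitCircle} then shows that $F(z_1,z_2)$ is disjoint from one of $F_+$ or $F_-$; call that disc $D_1$. It contains $\pm1$ (they lie on its boundary) and $D_1\subset\sqrt3\cdot\overline{\mathbb D}$, because the centre has modulus $1/\sqrt3$ and the radius is $2/\sqrt3$. Take $D_2=F(z_1,z_2)$. By Lemma~\ref{lemFinishingSplitting}, splittability of $\{-1,1,\infty\}\cup E$ follows once we show $z_3\notin\conv(D_1\cup D_2)$. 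It suffices to show $z_3\notin\conv(F(z_1,z_2)\cup\sqrt3\cdot\overline{\mathbb D})$.

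Now $\pm\sqrt3 i\notin E$, so by Lemma~\ref{lemPropertiesOfIntersectingCirclesBasic}~\ref{CirclePropertyii},\ref{CirclePropertyiii} we have $2\le|z_1-z_2|<2\sqrt2$. Also $|z_j|\ge\sqrt3$ since $z_j\notin\Omega$. Hence Lemma~\ref{lemNotInConvIfIntersects} applies and gives
\[
\conv(F(z_1,z_2)\cup\sqrt3\cdot\overline{\mathbb D})\subset\sqrt5\mathbb D\cup(z_1+2\mathbb D)\cup(z_2+2\mathbb D).
\]
Distinguishedness gives $|z_3-z_1|\ge2$ and $|z_3-z_2|\ge2$, so $z_3$ is outside the two small open discs. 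It remains to see that $|z_3|\ge\sqrt5$.

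The main obstacle is that $z_3\notin\Omega$ only guarantees $|z_3|\ge\sqrt3$, so the case $\sqrt3\le|z_3|<\sqrt5$ needs separate treatment. I would handle it by switching to another pair. Points outside $\Omega$ with modulus less than $\sqrt5$ have $|\mathrm{Re}|<1$ and $|\mathrm{Im}|\ge\sqrt3$, as in the proof of Lemma~\ref{lemPropertiesOfIntersectingCirclesBasic}~\ref{CirclePropertyiv}.

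\begin{itemize}
\item First try to show that no such $z_3$ can be $2$-separated from both $z_1$ and $z_2$. Both of these lie on the same side (say $\mathrm{Im}>0$, by Lemma~\ref{lemPropertiesOfIntersectingCirclesBasic}~\ref{CirclePropertyiv}), with $|z_1|\le\sqrt5$ and hence $z_1$ near the top cap. So if $\mathrm{Im}(z_3)>0$ as well, $z_3$ would be within distance $2$ of $z_1$, which is a contradiction.
\item If instead $\mathrm{Im}(z_3)\le-\sqrt3$, then the segment from $z_3$ to $z_1$ passes near $\overline{\mathbb D}$. I would try to show it actually meets $\overline{\mathbb D}$, since both points have $|\mathrm{Re}|\le1$ and lie on opposite sides. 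Then $E$ is splittable by a strip by Lemma~\ref{lemIfNotAsteriskThenNoOppositeStrips}, and Proposition~\ref{propSplittingIfAsterisk} finishes the argument.
\end{itemize}

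The geometric estimate that the segment meets $\overline{\mathbb D}$ (or an alternative rotation argument) is where I expect the real work to lie.
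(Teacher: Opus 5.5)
\textbf{Verdict.} Your plan follows the paper's proof step for step, and every reduction in it is correct: well-separatedness via Proposition~\ref{propIfWellSeparatedThenSplittable}, the disc $F_\pm\subset\sqrt3\cdot\overline{\mathbb D}$ from Lemma~\ref{lemDeformedUnitCircle}, $|z_1-z_2|<2\sqrt2$ from Lemma~\ref{lemPropertiesOfIntersectingCirclesBasic}~\ref{CirclePropertyiii}, Lemma~\ref{lemNotInConvIfIntersects}, and then the case split on the sign of $\mathrm{Im}(z_3)$ relative to $\mathrm{Im}(z_1)$ when $|z_3|<\sqrt5$. What is not yet a proof is the closing step. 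You have also placed the difficulty in the wrong case.

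\textbf{Opposite-sign case.} This case needs no real work. Since $|\mathrm{Re}(z_1)|,|\mathrm{Re}(z_3)|\le 1$ and the imaginary parts have opposite signs, the segment from $z_1$ to $z_3$ crosses the real axis. The crossing point has real part a convex combination of $\mathrm{Re}(z_1)$ and $\mathrm{Re}(z_3)$, so it lies in $[-1,1]\subset\overline{\mathbb D}$. Lemma~\ref{lemIfNotAsteriskThenNoOppositeStrips} and Proposition~\ref{propSplittingIfAsterisk} then finish this case.

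\textbf{Same-sign case (the actual gap).} This is where an estimate is required, and ``$z_1$ is near the top cap'' does not supply it. The region $R=\{w\notin\Omega:\ \mathrm{Im}(w)>0,\ |w|\le\sqrt5\}$ has corners $\pm1+2i$, which are at distance exactly $2$ from each other. So a diameter-type argument only gives $|z_1-z_3|\le 2$, and you must genuinely use the strict inequality $|z_3|<\sqrt5$.

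\textbf{How the paper closes it.} The paper shows $R\subset 2i+\overline{\mathbb D}$, with $|w-2i|<1$ whenever $|w|<\sqrt5$. Write $w=\alpha+i\beta\in R$.
\begin{enumerate}
\item If $\beta\ge2$, then $|w-2i|^2=|w|^2-4\beta+4\le 1$, and the inequality is strict when $|w|<\sqrt5$.
\item If $0<\beta<2$, then $|\alpha|\le1$ because $w\notin\Omega$ and $|w|\le\sqrt5$. Take $\alpha\ge0$ and suppose $|w-2i|\ge1$. Then $2-\beta\ge\sqrt{1-\alpha^2}\ge 1-\alpha$, so $\beta\le 1+\alpha$. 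Hence $|w-1|^2\le(\alpha-1)^2+(\alpha+1)^2=2\alpha^2+2\le 4$. If $\alpha<1$ this is strict; if $\alpha=1$ then $|w-1|=\beta<2$. Either way $|w-1|<2$, contradicting $w\notin\Omega$. The case $\alpha\le0$ is symmetric with $|w+1|<2$.
\end{enumerate}
Applying this to $z_1$ (with $|z_1|\le\sqrt5$) and $z_3$ (with $|z_3|<\sqrt5$) gives $|z_1-z_3|\le|z_1-2i|+|z_3-2i|<2$. This contradicts distinguishedness. With this estimate inserted, your argument is complete.
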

	\begin{proof}
		At first we note, that due to Propositions~\ref{propSplittingIfAsterisk} and~\ref{propIfWellSeparatedThenSplittable} we may assume that $E$ is not splittable by a strip and is not well-separated from zero. Choose the enumeration $E = \{z_1,z_2,z_3\}$ such that $z_1$ and $z_2$ are approximately collinear, $F(z_1, z_2) \cap \overline{\mathbb D} \ne \emptyset$, and $|z_1| \le |z_2|$. By Lemma~\ref{lemDeformedUnitCircle} either $F(z_1,z_2) \cap F_+ = \emptyset$, or $F(z_1,z_2) \cap F_- = \emptyset$. Let $F$ be one of the discs $F_{\pm}$ that does not intersect $F(z_1,z_2)$. Then, $-1,1 \in F$, so by Lemma~\ref{lemFinishingSplitting} it remains to show that $z_3 \notin \conv(F \cup F(z_1,z_2))$. Since $z_1 \ne \pm \sqrt{3}i$ by Lemma~\ref{lemPropertiesOfIntersectingCirclesBasic}~\ref{CirclePropertyiii} we have the inequality $|z_1 - z_2| < 2\sqrt{2}$. Thus, by Lemma~\ref{lemNotInConvIfIntersects} $ \conv(F \cup F(z_1,z_2)) \subset \sqrt{5}\mathbb D \cup (z_1 + 2\mathbb D) \cup (z_2 + 2\mathbb D)$. Therefore, the proof reduces to checking that $z_3 \notin \sqrt{5}\mathbb D \cup (z_1 + 2\mathbb D) \cup (z_2 + 2\mathbb D)$. Clearly, $z_3 \notin (z_1 + 2\mathbb D) \cup (z_2 + 2\mathbb D)$, since $E$ is distinguished. Thus, it remains to prove that $|z_3| \ge \sqrt{5}$. Assume the contrary, i.e. $|z_3| < \sqrt{5}$. From Lemma~\ref{lemPropertiesOfIntersectingCirclesBasic}~\ref{CirclePropertyi} it follows that $|z_1| \le \sqrt{5}$. Moreover, if $z \in \CC \setminus \Omega$ and $|z| \le \sqrt{5}$, then $|\mathrm{Re}(z)| \le 1$. Thus, $|\mathrm{Re}(z_1)|, |\mathrm{Re}(z_3)| \le 1$. If the numbers $\mathrm{Im}(z_1)$ and $\mathrm{Im}(z_3)$ have the opposite signs, then $E$ is splittable by a strip in view of Lemma~\ref{lemIfNotAsteriskThenNoOppositeStrips}. Thus, we may also assume that $\mathrm{Im}(z_1), \mathrm{Im}(z_2) > 0$ (the other case is handled similarly).
		
		Now to finish the proof it remains to verify the following {\it{auxiliary statement}}. If $w = \alpha + i\beta \in \CC \setminus \Omega$, $\mathrm{Im}(w) > 0$, and $|w| \le \sqrt{5}$ (resp. $|w| < \sqrt{5}$), then $|w - 2i| \le 1$ (resp. $|w - 2i| < 1|$). Applying this fact to $z_1$ and $z_3$ (with assumptions of the previous paragraph) we immediately get the contradiction with the fact that $E$ is distinguished, since $|z_1 - z_3| \le |z_1 - 2i| + |z_3 - 2i| < 1 + 1 = 2$. To prove the auxiliary statement at first assume that $|w| \le \sqrt{5}$ and $\beta = \mathrm{Im}(w) \ge 2$. Then $|w - 2i|^2 = (\beta - 2)^2 + \alpha^2 = \beta^2 - 4\beta + 4 + \alpha^2 \le 5 - 8 + 4 = 1$. Moreover, this inequality is strict, if $\alpha^2 + \beta^2 < 5$. Now assume that $|\alpha| \le 1$ and $0 < \beta < 2$. We claim that if $|w - 2i| \ge 1$, then either $|w - 1| < 2$, or $|w + 1| < 2$. Clearly, we may assume $\alpha \ge 0$ without loss of generality. Then we have the inequality $(\beta - 2)^2 \ge 1 - \alpha^2$, so $2 - \beta \ge \sqrt{1 - \alpha^2} \ge 1 - \alpha$. Therefore, $\beta \le 1 + \alpha$. Now we have the inequality $|w - 1|^2 = \beta^2 + (\alpha - 1)^2 \le (\alpha + 1)^2 + (\alpha - 1)^2 = 2\alpha^2 + 2$. If $\alpha < 1$, then $2 + 2\alpha^2 < 4$, so $|w - 1| < 2$. If $\alpha = 1$, then $|w - 1| = \beta < 2$. Thus, anyway, $|w - 1| < 2$.
	\end{proof}

	In order to finish the proof we need the last auxiliary lemma.
	
	\begin{lemma}\label{lemIfSqrt3AndNotSplittableByAStripThenHalfPlane}
 		Assume that $E \subset \CC$ is a distinguished set such that $\sqrt{3}i \in E$ and $E$ is not splittable by a strip. Then $\mathrm{Im}(z) > -1$ for all $z \in E$. 
	\end{lemma}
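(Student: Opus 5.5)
We argue by contradiction. Suppose $E=\{\sqrt{3}i,v,w\}$ is distinguished, not splittable by a strip, and $\mathrm{Im}(w)\le -1$. The idea is to force, via Proposition~\ref{propIfSharpNotAsteriskThenStrips}, a chain of approximate collinearities $\sqrt{3}i\sim v\sim w$, and then show that the two ``directions'' $\sqrt{3}i$ and $w$ are too far apart in argument for a single point $v\in\CC\setminus\Omega$ to be approximately collinear with both.

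\emph{Step 1 (the chain).} Since $\mathrm{Im}(\sqrt{3}i)=\sqrt{3}>1$ and $\mathrm{Im}(w)\le -1$, Lemma~\ref{lemIfLargeImaginaryOppositeSignsThenNotApprox} shows that $\sqrt{3}i$ and $w$ are not approximately collinear. By Proposition~\ref{propIfSharpNotAsteriskThenStrips}, applied to $\sqrt{3}i$ and then to $w$, the point $v$ is approximately collinear with $\sqrt{3}i$ and with $w$. By the same lemma this forces $\mathrm{Im}(v)>-1$.

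\emph{Step 2 (where $w$ can be).} By Lemma~\ref{lemIfNotAsteriskThenNoOppositeStrips}, the segment $\conv\{\sqrt{3}i,w\}$ does not meet $\overline{\mathbb D}$, since otherwise $E$ would be splittable by a strip. The two tangent lines from $\sqrt{3}i$ to the unit circle have slopes $\pm\sqrt{2}$. Hence $\mathrm{Im}(w)\le-1$ forces $|\mathrm{Re}(w)|\ge(\sqrt{3}+1)/\sqrt{2}$, with $w$ lying beyond the corresponding tangent line. By symmetry assume $\mathrm{Re}(w)>0$, so that $\arg w\in(-\pi/2,0)$. This gives an explicit lower bound for the angle $\theta$ between the rays through $\sqrt{3}i$ and $w$; it is larger than $\pi/2$, and I expect roughly $\pi/2+\arctan(1/\sqrt{2})$ up to the admissible range.

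\emph{Step 3 (angular contradiction).} By Lemma~\ref{lemArcsin}~\ref{arcsinii}, if $p,q$ are approximately collinear then
\[
|\arg p-\arg q|\le \arcsin(1/|p|)+\arcsin(1/|q|).
\]
Chaining through $v$ gives
\[
\theta\le \arcsin(1/\sqrt{3})+2\arcsin(1/|v|)+\arcsin(1/|w|).
\]
With only the crude bounds $|v|,|w|\ge\sqrt{3}$ this is too weak, so the main obstacle is to sharpen the bound. I would do this in three ways.
\begin{itemize}[label=$\circ$]
\item Use $|v-\sqrt{3}i|\ge 2$ together with $v\notin\Omega$. With Lemma~\ref{lemAnglesOfSmallAbsoluteValuesOutsideOmega}, this makes $|v|$ large whenever $\arg v$ is close to $\pi/2$. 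The case $\mathrm{Im}(v)<0$ must also be ruled out; it is handled by the strip lemma applied to the segment from $\sqrt{3}i$ to $v$.
\item Couple $|w|$ with $\arg w$ via Step 2: a small $|w|$ forces $\arg w$ to be more negative.
\item Treat the two links separately. For the link $v\sim w$, use the explicit half-strip containment $z/a\in\Sigma$ rather than the symmetric arcsine bound.
\end{itemize}
Concretely, I would parametrize $v=\rho e^{i\beta}$ and optimize over $\rho$. If $\beta$ is near $\pi/2$, then $\rho$ is forced to be large and both arcsines are small. If $\beta$ is far from $\pi/2$, then the link to $\sqrt{3}i$ fails directly. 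I expect the resulting one-variable inequalities to be provable by monotonicity arguments of the same kind as in Lemma~\ref{lemDeformedUnitCircle}.

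If Step 3 turns out too delicate, a fallback is to construct directly a rotation $a$ separating $\sqrt{3}i$ and $w$ by a strip with $|\mathrm{Im}(v/a)|>1$. For this I would choose $a$ near the direction perpendicular to the segment $[\sqrt{3}i,w]$, mimicking the perturbation argument in Proposition~\ref{propIfSharpNotAsteriskThenStrips}.
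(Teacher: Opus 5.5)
\textbf{There is a genuine gap: Step~3, which carries the entire content of the proof, is never carried out.}

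Your Step~1 is correct, and it is also how the paper begins. By Lemma~\ref{lemIfLargeImaginaryOppositeSignsThenNotApprox}, $\sqrt{3}i$ and $w$ are not approximately collinear, so Proposition~\ref{propIfSharpNotAsteriskThenStrips} makes $v$ approximately collinear with both. In Step~3, however, you only list possible sharpenings and say you expect the resulting inequalities to hold, so no contradiction is actually derived.

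Two of your guiding claims are also false:
\begin{itemize}
\item Step~2 gives no uniform lower bound $\theta \ge \pi/2 + c$ with $c>0$. The points $w = x - i$ with $x \to \infty$ satisfy $\mathrm{Im}(w) \le -1$, $w \notin \Omega$ and $\conv\{\sqrt{3}i, w\} \cap \overline{\mathbb D} = \emptyset$, yet $\theta = \pi/2 + \arctan(1/x) \to \pi/2$.
\item The case $\mathrm{Im}(v) < 0$ cannot be dismissed by Lemma~\ref{lemIfNotAsteriskThenNoOppositeStrips}. For example, $v = 3 - i/2$ lies outside $\Omega$, but the segment from $\sqrt{3}i$ to $v$ stays at distance about $1.39$ from the origin.
\end{itemize}

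The idea you are missing, and which the paper uses, is that the partner $v$ of $\sqrt{3}i$ must satisfy $\mathrm{Im}(v) > 1$, not merely $\mathrm{Im}(v) > -1$. Suppose $|\mathrm{Im}(v)| \le 1$ and, say, $\mathrm{Re}(v) \ge 0$. Then $v \notin \Omega$ forces $\mathrm{Re}(v) \ge 1 + \sqrt{3}$. For $a = \bigl((3 + 2\sqrt{3}) + i(2 + 2\sqrt{3})\bigr)/(5 + 2\sqrt{3}) \in \TT$ one gets $\mathrm{Im}(\sqrt{3}i/a) > 1$ and $\mathrm{Im}(v/a) \le -1$. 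This contradicts approximate collinearity via Lemma~\ref{lemIfLargeImaginaryOppositeSignsThenNotApprox} applied after rotation. Once $\mathrm{Im}(v) > 1$ is known, the same lemma shows at once that $v$ and $w$ are not approximately collinear, with no angle estimates at all.

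Your angular route can also be closed, and more easily than you anticipate, but only with two facts you did not isolate:
\begin{itemize}
\item Write $\arg w = -\phi$ with $\phi \in (0, \pi/2]$ (after the symmetry $\mathrm{Re}(w) \ge 0$). The condition $\mathrm{Im}(w) \le -1$ gives $\sin\phi \ge 1/|w|$, i.e.\ $\phi \ge \arcsin(1/|w|)$. This exactly cancels the $\arcsin(1/|w|)$ term in your chained bound; it is the right coupling, not the tangent-line picture.
\item $|v| \ge \sqrt{5}$. This follows from $v \notin \Omega$, $|v - \sqrt{3}i| \ge 2$ and $\mathrm{Im}(v) > -1$. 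If $|\mathrm{Im}(v)| \le 1$ then $|\mathrm{Re}(v)| \ge 1+\sqrt{3}$. If $\mathrm{Im}(v) > 1$ and $|v| < \sqrt{5}$, then $|\mathrm{Re}(v)| \le 1$ and $\mathrm{Im}(v) \ge \sqrt{3}$, which gives $|v - \sqrt{3}i| < 2$, a contradiction.
\end{itemize}
With these, your chain yields $\pi/2 \le \arcsin(1/\sqrt{3}) + 2\arcsin(1/\sqrt{5})$. This is false, because $\cos(2\arcsin(1/\sqrt{5})) = 3/5 > 1/\sqrt{3} = \cos(\pi/2 - \arcsin(1/\sqrt{3}))$.
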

	\begin{proof}
		This can be verified by a slight modification of the proof of Lemma~\ref{lemPropertiesOfIntersectingCirclesBasic}~\ref{CirclePropertyiv}. That is, let $z,w$ be the remaining points in $E$, i.e. $E = \{\sqrt{3}i, z, w\}$. In view of Proposition~\ref{propIfSharpNotAsteriskThenStrips} we may without loss of generality assume that $\sqrt{3}i$ and $z$ are approximately collinear. 
		
		We claim that $\mathrm{Im}(z) > 1$. If $\mathrm{Im}(z) \le -1$, then $\sqrt{3}i$ and $z$ are not approximately collinear by Lemma~\ref{lemIfLargeImaginaryOppositeSignsThenNotApprox}. Now assume that $|\mathrm{Im}(z)| \le 1$. Without loss of generality we may assume that $\mathrm{Re}(z) \ge 0$. Then, since $z \notin \Omega$, it follows that $\mathrm{Re}(z) \ge 1 + \sqrt{3}$. Now consider 
		\[
		a = \dfrac{3 + 2\sqrt{3}}{5 + 2\sqrt{3}} + i\dfrac{2 + 2\sqrt{3}}{5 + 2\sqrt{3}}.
		\]
		Then it is easily verified that $|a| = 1$, and that
		\[
		\mathrm{Im}\left(\dfrac{i\sqrt{3}}{a}\right) = \sqrt{3}\dfrac{3 + 2\sqrt{3}}{5 + 2\sqrt{3}} = \dfrac{3\sqrt{3} + 6}{2\sqrt{3} + 5} > 1.
		\]
		On the other hand
		\begin{multline*}
			\mathrm{Im}\left(\dfrac{z}{a}\right) = \dfrac{1}{5 + 2\sqrt{3}} (\mathrm{Im}(z)(3 + 2\sqrt{3}) - \mathrm{Re}(z)(2 + 2\sqrt{3})) \le \\  \dfrac{1}{5 + 2\sqrt{3}} ((3 + 2\sqrt{3}) - (1 + \sqrt{3})(2 + 2\sqrt{3})) = 1.
		\end{multline*}
		Thus, $\sqrt{3}i/a$ and $z/a$ are not approximately collinear by Lemma~\ref{lemIfLargeImaginaryOppositeSignsThenNotApprox}. This contradiction shows that $\mathrm{Im}(z) > 1$.
		
		Now $w$ is approximately collinear either with $\sqrt{3}i$, or with $z$. Due to Lemma~\ref{lemIfLargeImaginaryOppositeSignsThenNotApprox} this implies that $\mathrm{Im}(w) > -1$.
	\end{proof}
	
	\begin{proof}[Proof of Theorem~\ref{thMain}]
		Let $E \subset \CC$ be a distinguished set. We show that $\{-1,1,\infty\} \cup E$ is splittable (by Proposition~\ref{propReductionToSharp} this will imply the statement of Theorem~\ref{thMain}). Due to Proposition~\ref{propIfNotSqrt3ThenSplittable} if $\pm \sqrt{3}i \notin E$, then this statement is true. Thus, it remains to consider the case, when $\sqrt{3}i \in E$ (the other case, i.e. $-i\sqrt{3} \in E$ is handled similarly). Let $z$ and $w$ denote the remaining points of $E$, i.e. $E = \{\sqrt{3}i, z, w\}$. Now assume the contrary, i.e. $\{-1, 1, \infty\} \cup E$ is not splittable. Then by Proposition~\ref{propReductionToSharp} $E$ is not splittable by a strip, so by Lemma~\ref{lemIfSqrt3AndNotSplittableByAStripThenHalfPlane} we have inequalities $\mathrm{Im}(z), \mathrm{Im}(w) > -1$. Now we introduce the mapping $S$ defined as $S(x) = e^{i2\pi/3}x + e^{i\pi/3}$ (this mapping is the rotation with center $i/\sqrt{3}$ by the angle $2\pi/3$). It is easy to verify that $S(1) = \sqrt{3}i$, $S(\sqrt{3}i) = -1$, and $S(-1) = 1$. Moreover, consider the sets $E_1 = \{\sqrt{3}i, S(z), S(w)\}$ and  $E_2 = \{\sqrt{3}i, S^2(z), S^2(w)\}$ (also let $E_0 = E$). Clearly, by Lemma~\ref{lemProjectiveInvarianceSplittable}, if any of the sets $E_j \cup \{1, -1, \infty\}$, $j = 0,1,2$ is splittable, then all of them are splittable. Thus, all the sets $E_j$ for $j =1,2,3$ are not splittable by a strip. Therefore, $z,w \in T$, where $T$ is defined as
		\[
		T = \{x \in \CC: \mathrm{Im}(x) > -1, \mathrm{Im}(S(x)) > -1, \mathrm{Im}(S^2(x)) > -1\}.
		\]
		The set $T$ is easily seen (see Fig.~\ref{figTriangleT}) to be the open triangle
		\[
		T = \mathrm{Int} \conv\{-1 - \sqrt{3} - i, 1 + \sqrt{3} - i, (2 + \sqrt{3})i\}.
		\]
		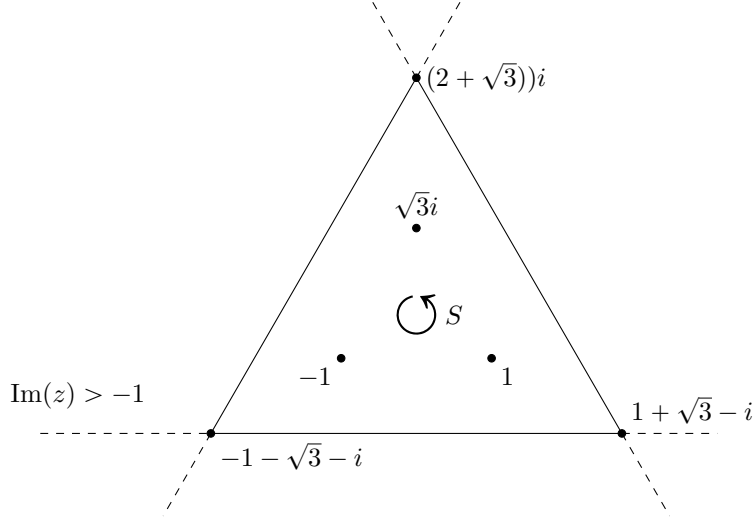
\begin{figure}
			\begin{tikzpicture}
				\filldraw (1, 0) circle[radius = 0.05] node[below right] {$1$};
				\filldraw (-1, 0) circle[radius = 0.05] node[below left] {$-1$};
				\filldraw (0, {sqrt(3)}) circle[radius = 0.05] node[above] {$\sqrt{3}i$};
				\draw[dashed] (-5, -1) -- ({-1 - sqrt(3)}, -1);
				\draw ({-1 - sqrt(3)}, -1) -- ({1 + sqrt(3)}, -1);
				\draw[dashed] ({1 + sqrt(3)}, -1) -- (4,-1);
				\draw ({-1 - sqrt(3)}, -1) -- (0, {2 + sqrt(3)});
				\draw ({1 + sqrt(3)}, -1) -- (0, {2 + sqrt(3)});
				\draw[dashed] ({1 + sqrt(3)}, -1) -- ({1 + sqrt(3) + 0.5*(3 - sqrt(3))}, {-1 - 0.5*sqrt(3)*(3 - sqrt(3))});
				\draw[dashed] ({-1 - sqrt(3)}, -1) -- ({-1 - sqrt(3) - 0.5*(3 - sqrt(3))}, {-1 - 0.5*sqrt(3)*(3 - sqrt(3))});
				\draw[dashed] (0, {2 + sqrt(3)}) -- ({0.5*(3 - sqrt(3))}, {2 + sqrt(3) + sqrt(3)*0.5*(3 - sqrt(3))});
				\draw[dashed] (0, {2 + sqrt(3)}) -- ({-0.5*(3 - sqrt(3))}, {2 + sqrt(3) + sqrt(3)*0.5*(3 - sqrt(3))});
				\filldraw ({-1-sqrt(3)}, -1) circle[radius = 0.05] node[below right] {$-1 - \sqrt{3} - i$};
				\filldraw ({1+sqrt(3)}, -1) circle[radius = 0.05] node[above right] {$1 + \sqrt{3} - i$};
				\filldraw (0, {2 + sqrt(3)}) circle[radius = 0.05] node[right] {$(2 + \sqrt{3}))i$};
				\node at (-4.5, -0.5) {$\mathrm{Im}(z) > -1$};
				\node[scale = 2] at (0, {1/sqrt(3)}) {$\circlearrowleft$};
				\node at (0.5, {1/sqrt(3)}) {$S$};
			\end{tikzpicture}
			\caption{The triangle $T$.}
			\label{figTriangleT}
		\end{figure}
		Now it remains to note that $T \subset (-1 + 2\mathbb D) \cup (1 + 2\mathbb D) \cup (\sqrt{3}i + 2\mathbb D)$, so $E$ is not distinguished. This contradiction shows that $\{-1, 1, \infty\} \cup E$ is splittable.
	\end{proof}
	
	\printbibliography

@incollection {BuchOldAndNew,
    AUTHOR = {Buchstaber, V. M. and Enolski, V. Z. and Leykin, D. V.},
     TITLE = {{$\sigma$}-functions: old and new results},
 BOOKTITLE = {Integrable systems and algebraic geometry. {V}ol. 2},
    SERIES = {London Math. Soc. Lecture Note Ser.},
    VOLUME = {459},
     PAGES = {175--214},
 PUBLISHER = {Cambridge Univ. Press, Cambridge},
      YEAR = {2020},
   MRCLASS = {14H50 (14H40 33E05)},
  MRNUMBER = {4421431},
}

@phdthesis{Smith,
	author = {Smith, Benjamin Andrew},
	title = {Explicit endomorphisms and correspondences},
	year = {2005},
	school = {The University of Sydney}
}

@article{Humbert,
  title={Sur la transformation ordinaire des fonctions abeliennes},
  author={Humbert, Georges},
  journal={Journal de Math{\'e}matiques Pures et Appliqu{\'e}es},
  volume={7},
  pages={395--417},
  year={1901}
}

@article{KleinianWeight2,
	AUTHOR = {Smirnov, Matvey},
	TITLE = {Kleinian hyperelliptic funtions of weight 2 associated with curves of genus 2},
	journal={arXiv preprint arXiv:2603.08253},
  	year={2026}
}

@article{FinalAlgorithm,
	AUTHOR = {Smirnov, Matvey},
	TITLE = {Computation of genus 2 Kleinian hyperelliptic functions via Richelot isogenies},
	journal={arXiv preprint arXiv:2603.23188},
  	year={2026}
}

@book {Cartan,
    AUTHOR = {Cartan, Henri},
     TITLE = {Elementary theory of analytic functions of one or several
              complex variables},
   EDITION = {1973},
      NOTE = {Translated from the French},
 PUBLISHER = {Dover Publications, Inc., New York},
      YEAR = {1995},
     PAGES = {228}
}

@book {ConvexLectures,
    AUTHOR = {Hug, Daniel and Weil, Wolfgang},
     TITLE = {Lectures on convex geometry},
    SERIES = {Graduate Texts in Mathematics},
    VOLUME = {286},
 PUBLISHER = {Springer, Cham},
      YEAR = {[2020] \copyright 2020},
     PAGES = {xviii+287},
}
	
\end{document}